\newtheorem{theorem}{Theorem}
\newtheorem{lemma}[theorem]{Lemma}
\newtheorem{proposition}[theorem]{Proposition}
\newtheorem{corollary}[theorem]{Corollary}
\newtheorem{definition}[theorem]{Definition}
\newtheorem{remark}[theorem]{Remark}
\newtheorem{example}[theorem]{Example}
\numberwithin{equation}{section}
\providecommand{\keywords}[1]{\textbf{\textit{Keywords:}} #1}
\providecommand{\subjclass}[1]{\textbf{\textit{2010 Mathematics Subject Classification:}} #1}
\begin{document}
\date{}
\title{Beurling densities and frames of exponentials on the union of small balls}
\author{Jean-Pierre Gabardo\thanks{Supported by an NSERC grant}
\\ Department of Mathematics and Statistics\\
 McMaster University\\Hamilton, Ontario, L8S 4K1, Canada\\
gabardo@mcmaster.ca
\and
Chun-Kit Lai\thanks{Supported by the mini-grant of ORSP of San Francisco State University (Grant No: ST659)}\\ Department of Mathematics\\
 San Francisco State University\\
San Francisco, CA 94132, USA\\
cklai@sfsu.edu
}

\maketitle
\begin{abstract}
If $x_1,\dots,x_m$ are finitely many points in $\mathbb{R}^d$,
let $E_\epsilon=\cup_{i=1}^m\,x_i+Q_\epsilon$,
where $Q_\epsilon=\{x\in \mathbb{R}^d,\,\,|x_i|\le \epsilon/2, \, i=1,...,d\}$ and let $\hat f$ denote the
Fourier transform of $f$.
Given a positive Borel measure $\mu$
on $\mathbb{R}^d$, we provide a necessary and sufficient condition for the frame inequalities
$$
A\,\|f\|^2_2\le \int_{\mathbb{R}^d}\,|\hat f(\xi)|^2\,d\mu(\xi)\le B\,\|f\|^2_2,\quad f\in L^2(E_\epsilon),
$$
to hold for some $A,B>0$ and for some $\epsilon>0$ sufficiently small. If $m=1$,
we show that the limits of the optimal lower and upper frame bounds as $\epsilon\rightarrow 0$
are equal, respectively, to the lower and upper Beurling density of $\mu$.
When $m>1$, we extend this result by defining a matrix version of Beurling
density.  Given a (possibly dense)
subgroup $G$ of $\mathbb{R}$, we then consider the problem of characterizing
those measures $\mu$ for which the inequalities above
hold whenever $x_1,\dots,x_m$ are finitely many points in $G$ (with $\epsilon$ depending on those points, but
not $A$ or $B$). We point out an interesting connection between this problem and the notion of
well-distributed sequence when $G=a\,\mathbb{Z}$ for some $a>0$. Finally, we show the existence of a
discrete set $\Lambda$ such that the
measure $\mu=\sum_{\lambda}\,\delta_\lambda$ satisfy the property above for the whole group
$\mathbb{R}$.
\end{abstract}
\keywords{Frame, Beurling density, almost-periodic function.}\\
\subjclass{Primary 42C15; Secondary 42A75.}
\section{Introduction}
If $E \subset \mathbb{R}^d$ is measurable with $|E|<\infty$,
where $|E|$ denotes the Lebesgue measure of $E$,
 let $L^2(E)$ be the space of (complex-valued) square-integrable functions
on $E$.
 Given a discrete subset $\Lambda\subset \mathbb{R}^d$,
consider the collection of exponentials
$\mathcal{E}(\Lambda)=\{e^{2\pi i \lambda\cdot x},\,\, \lambda\in\Lambda \}$.
This collection forms a {\it Fourier frame} for $L^2(E)$ if
there exist constants $A,B>0$  such that
\begin{equation}\label{discrete-frame}
A\,\|f\|_2^2\leq \sum_{\lambda\in\Lambda}
\left|\int_{E}\,f(x)\,e^{-2\pi i \lambda\cdot x}\,dx\right|^2
\leq B\|f\|_2^2,\quad f\in L^2(E).
\end{equation}
If we allow the constant $A$ to be zero as well, $\mathcal{E}(\Lambda)$
is then called a {\it Bessel collection} in  $L^2(E)$.
Defining the Fourier transform of a  function $f\in L^1(\mathbb{R}^d)$ by the formula
$$
 \hat{f}(\xi) = \int_{\mathbb{R}^d}\, f(x)\,e^{-2\pi i \xi\cdot x}\,dx,\quad \xi\in \mathbb{R}^d,
$$
and the measure $\mu=\delta_{\Lambda}:=\sum_{\lambda\in\Lambda}\,\delta_{\lambda}$,
we can rewrite the frame inequalities (\ref{discrete-frame}) as
\begin{equation}\label{measure-frame}
A\,\|f\|_2^2\leq \int_{\mathbb{R}^d}\,|\hat f(\lambda)|^2\,d\mu(\lambda)
\leq B\,\|f\|_2^2,\quad f\in L^2(E).
\end{equation}
If the inequalities (\ref{measure-frame}) hold for a general positive Borel $\mu$ on
$\mathbb{R}^d$, we call the measure $\mu$ an  {\it exponential  frame measure}
(abbr.~$\mathcal{F}$-measure)   for  $L^2(E)$.
Similarly, we call $\mu$ an {\it exponential Bessel measure} (abbr.~$\mathcal{B}$-measure) for $L^2(E)$
if $A$ is allowed to be
$0$ in  (\ref{measure-frame}).
\medskip

The notion of frame was first introduced by Duffin and Schaeffer \cite{[DS]}.
This area of research has been developing rapidly in recent years, 
both in theory and applications, and has become one of the main tools in applied harmonic analysis, 
including Gabor analysis, wavelet theory,  sampling theory and signal processing. 
Readers may refer to \cite{Chr03} for general background on the theory of frames. 
Fourier frames were  first introduced in \cite{[DS]} under the name of 
non-harmonic Fourier series. They are theoretically attractive 
since in contrast to orthonormal bases, 
Fourier frames are easy to construct on bounded sets and are robust to 
small perturbation of the set of frequencies. 
They are also valuable in applications since Fourier frames on 
$L^2(E)$ allow for the reconstruction of signals whose frequency band is 
supported on $E$.  We refer the reader to \cite{Yo} for classical results
concerning frames of exponentials. 
The concept of ${\mathcal F}$-measure as defined in (\ref{measure-frame}) is a particular case
 of ``generalized frame'' associated with a measure introduced in \cite{[GH]}. 
In addition to making our results more general, it allows us to 
simplify notations and provide further 
flexibility when considering problems about Fourier frames \cite{[DHW],[GL1]}.

\medskip

One of the main goal of this paper is to provide necessary
and sufficient conditions for a measure $\mu$ 
(and a discrete set $\Lambda$) to be an $\mathcal{F}$-measure (resp.~a
$\mathcal{B}$--measure)
for $L^2(E)$ when $E$ is a union of finitely many sufficiently ``small'' balls.  
As in the case with many results related to sampling \cite{[GR],[Ja],[Lan]}, 
the notions of upper and lower Beurling density appear naturally in the solution of our 
problems.
We first recall that the {\it upper and lower Beurling density} of a
positive Borel measure $\mu$ on $\mathbb{R}^d$ are defined, respectively, as
$$
\mathcal{D}^{+}(\mu) = \limsup_{h\rightarrow\infty}\sup_{x\in \mathbb{R}^d}\frac{\mu(x+Q_{h})}{h^d},
$$
and
$$
\mathcal{D}^{-}(\mu)
= \liminf_{h\rightarrow\infty}\inf_{x\in \mathbb{R}^d}\frac{\mu(x+Q_{h})}{h^d},
$$
where
$$
Q_h=\{x=(x_1,\dots,x_d)\in \mathbb{R}^d,\,\,|x_i|\le d/2,\,\,i=1,\dots,d\}
$$
is the hypercube of side length $h$ centered at the origin. If $\mathcal{D}^{-}(\mu)=\mathcal{D}^{+}(\mu)<\infty$,  the common value of both densities, denoted by $\mathcal{D}(\mu)$, is called the the  Beurling density
of $\mu$. Note that, if $x\in\mathbb{R}^d$ and $A,B$ are subsets of $\mathbb{R}^d$, we use the notation
$x+A$ for the set $\{x+a,\,\,a\in A\}$ and $A+B$ for the set $\{a+b,\,\,a\in A,\,\,b\in B\}$.
If $\Lambda$ is a countable set contained in $\mathbb{R}^d$,
we define $\mathcal{D}^{-}(\Lambda)=\mathcal{D}^{-}(\delta_{\Lambda})$,
 $\mathcal{D}^{+}(\Lambda)=\mathcal{D}^{+}(\delta_{\Lambda})$
and  $\mathcal{D}(\Lambda)=\mathcal{D}(\delta_{\Lambda})$, where
$\delta_{\Lambda}=\sum_{\lambda\in\Lambda}\delta_{\lambda}$.
A positive Borel measure $\mu$ is called {\it translation-bounded} if  there
exists a constant $C>0$ such that
$$
\mu(x+[0,1]^d)\leq C,\quad  \forall x\in\mathbb{R}^d.
$$
It is known that $\mu$ is translation-bounded if and only if ${\mathcal D}^{+}(\mu)<\infty$ (\cite{Ga}, see also Proposition \ref{translation-bounded} in Section 2).

\medskip

Suppose that $B(a,\epsilon)$ is a ball of radius $\epsilon$ centered at $a$. 
It was shown  in \cite[Proposition 2.5]{[Lai]}, 
using a perturbation argument, that if $D^{-}(\Lambda)>0$, 
then for sufficiently small $\epsilon>0$, ${\mathcal E}(\Lambda)$ is a Fourier frame for 
$L^2(B(a,\epsilon))$ for any $a\in{\mathbb R}^d$. The converse clearly holds by the 
density result of Landau (\cite{[Lan]}) (i.e.~if ${\mathcal E}(\Lambda)$ 
is a frame for $L^2(E),$ then ${\mathcal D}^-(\Lambda)\ge |E|$).
 A similar result was also obtained by Beurling (\cite{[Beu]}) who showed that 
if $\Lambda\subset{\mathbb R}^d$ is a uniformly discrete set 
satisfying the covering property 
$$
\bigcup_{\lambda\in\Lambda}(B(0,1/r)+\lambda)={\mathbb R}^d,
$$
then, $\Lambda$ is a Fourier frame for $L^2(\epsilon B(0,r))$ if $\epsilon<1/4$.
(See also \cite{OU1}.) This is now known as the Beurling covering 
theorem and has found application in MRI reconstruction (\cite{[BW]}). 
Our first theorem complements these results by providing a precise relation 
between the Beurling densities and the frame bounds in the case where $E$
is a small neighborhood of a single point in $\mathbb{R}^d$
(which we take to be a cube for convenience).

\begin{theorem}\label{measure-frame1}
Let $\mu$ be positive, locally finite Borel measure on $\mathbb{R}^d$.
Then the following are equivalent.
\begin{enumerate}[(a)]
\item There exists constants $A,B>0$ and $\epsilon>0$
such that
$$
A\,\|f\|_2^2\le \int_{\mathbb{R}^d}\,|\hat f(\xi)|^2\,d\mu(\xi)
\le B\,\|f\|_2^2,\quad f\in L^2(Q_\epsilon).
$$
\item We have $0<\mathcal{D}^-(\mu)
\le \mathcal{D}^+(\mu)<\infty.$
\end{enumerate}
Moreover, if (a) holds, we have $A\le \mathcal{D}^-(\mu)
\le \mathcal{D}^+(\mu)\le B$  and if (b) holds we can find,
for any $\rho>0$ a corresponding $\epsilon>0$ such that the inequalities in (a) hold with
$A=\mathcal{D}^-(\mu)-\rho$
and $B=\mathcal{D}^+(\mu)+\rho$.
\end{theorem}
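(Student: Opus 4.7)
My plan is to prove both directions of the equivalence with the sharp quantitative bounds. The direction (a)$\Rightarrow$(b) (with $A\le\mathcal{D}^-(\mu)$ and $\mathcal{D}^+(\mu)\le B$) will come from a concentrated test-function argument, while (b)$\Rightarrow$(a) is the main technical step; the principal difficulty there is controlling an error against the infinite measure $\mu$.

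For (b)$\Rightarrow$(a), I would compare $\int|\hat f|^2\,d\mu$ with a ``smoothed'' version. Fix $\rho>0$; by the uniform sup/inf formulation of the Beurling density, choose $h$ so that $h^{-d}\mu(\xi+Q_h)\in[\mathcal{D}^-(\mu)-\rho/2,\mathcal{D}^+(\mu)+\rho/2]$ for every $\xi$. With $\phi_h=h^{-d}\chi_{Q_h}$, Fubini and the symmetry of $\phi_h$ yield
\[
\int(|\hat f|^2*\phi_h)\,d\mu=\int|\hat f(\xi)|^2(\mu*\phi_h)(\xi)\,d\xi\in\bigl[(\mathcal{D}^-(\mu)-\tfrac{\rho}{2})\|f\|_2^2,(\mathcal{D}^+(\mu)+\tfrac{\rho}{2})\|f\|_2^2\bigr].
\]
It then remains to show that the error
\[
\mathcal{E}=\int\bigl(|\hat f|^2-|\hat f|^2*\phi_h\bigr)\,d\mu
\]
satisfies $|\mathcal{E}|\le(\rho/2)\|f\|_2^2$ when $\epsilon$ is small enough. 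The integrand measures the oscillation of $|\hat f|^2$ on cubes of side $h$, and for $f$ supported in $Q_\epsilon$ the bound $\|\nabla\hat f\|_\infty\le\pi\sqrt d\,\epsilon^{d/2+1}\|f\|_2$ (from differentiating under the integral) makes $|\hat f|^2$ slowly vary on this scale once $\epsilon h$ is small.

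The main obstacle is converting this pointwise slow variation into a bound on $\mathcal{E}$, because $\mu(\mathbb R^d)=\infty$ precludes a uniform estimate. My strategy is to partition $\mathbb R^d$ into disjoint cubes $kh+Q_h$ and bound $|\mathcal E|\le\sum_k V_k\mu(kh+Q_h)$, where $V_k$ is the oscillation on the cell. Then I would use the Plancherel--Polya sampling inequality for the Paley--Wiener space associated with $L^2(Q_\epsilon)$---which gives $\sup_{kh+Q_h}|\hat f|^2\lesssim h^{-d}\int_{kh+Q_{\lambda h}}|\hat f|^2\,d\xi$ with $\lambda>1$---together with the translation-boundedness $\mu(kh+Q_h)\le(\mathcal D^+(\mu)+\rho/2)h^d$ to deduce that $\sum_k\sup_{kh+Q_h}|\hat f|^2\cdot\mu(kh+Q_h)\lesssim(\mathcal D^+(\mu)+\rho/2)\|f\|_2^2$. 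Expressing $V_k$ as $2\sup_{kh+Q_h}|\hat f|$ times a factor $O((\epsilon h)\epsilon^{d/2}\|f\|_2)$, this control plus a Cauchy--Schwarz-type combination should yield a bound of order $(\epsilon h)\|f\|_2^2$ times the Beurling density, which becomes $\le(\rho/2)\|f\|_2^2$ for $\epsilon$ small relative to $1/h$.

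For (a)$\Rightarrow$(b) with the sharp bounds, I would test the inequality on dilated-modulated functions. Fix $g\in L^2(Q_1)$ with $\|g\|_2=1$ and $|\hat g|^2$ close to $\chi_{Q_1}$ in $L^1$. For $\delta<\epsilon$ and $y\in\mathbb R^d$, set $f_{y,\delta}(x)=\delta^{-d/2}g(x/\delta)e^{2\pi iy\cdot x}$, which lies in $L^2(Q_\epsilon)$ with unit norm. The substitution $u=\delta(\xi-y)$ yields
\[
\int|\hat f_{y,\delta}|^2\,d\mu=\int|\hat g(u)|^2\,d\nu_{y,\delta}(u),
\]
where $\nu_{y,\delta}(A)=\delta^d\mu(y+\delta^{-1}A)$, so that $\nu_{y,\delta}(Q_L)/L^d=\mu(y+Q_{L/\delta})/(L/\delta)^d$ tends to the local Beurling density of $\mu$ at $y$ as $\delta\to 0$. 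Choosing $y$ to approach points of near-maximum (resp.\ near-minimum) density and letting $\delta\to 0$ after absorbing the $L^1$-approximation error in $|\hat g|^2\approx\chi_{Q_1}$, the frame bounds $A\le\int|\hat f_{y,\delta}|^2\,d\mu\le B$ yield $A\le\mathcal{D}^-(\mu)$ and $\mathcal{D}^+(\mu)\le B$, which also forces the strict positivity and finiteness required in (b).
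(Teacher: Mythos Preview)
Your (a)$\Rightarrow$(b) argument has a gap: for $g\in L^2(Q_1)$ with $\|g\|_2=1$, one \emph{cannot} make $|\hat g|^2$ close to $\chi_{Q_1}$ in $L^1$, since $\hat g$ is entire of exponential type and $|\hat g|^2$ cannot concentrate on a cube of side comparable to the inverse bandwidth. What your modulated test functions actually give (and no dilation is needed for this) is the pointwise convolution bound $A\le(\mu*|\hat f|^2)(y)\le B$ for every $y\in\mathbb R^d$ and every $f\in L^2(Q_\epsilon)$: just apply the frame inequality to $x\mapsto e^{2\pi i y\cdot x}\overline{f(x)}$. The paper proceeds from here by invoking the general fact (Theorem~\ref{Ga-th1}, from~\cite{Ga}) that $\mu*h\le B$ a.e.\ with $h\ge0$, $\int h=1$ forces $\mathcal D^+(\mu)\le B$, and dually for the lower bound. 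Your density-extraction step is really an attempted reproof of that lemma; it should be carried out by integrating the convolution inequality over a large cube, not via the impossible $L^1$-approximation.

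Your (b)$\Rightarrow$(a) strategy is sound and genuinely different from the paper's, but the error bound does not close as written. With $V_k\le 2M_k\cdot O(\epsilon h)\,\epsilon^{d/2}\|f\|_2$ (the second factor coming from the \emph{global} bound $\|\nabla\hat f\|_\infty$), the sum $\sum_k V_k\,\mu(kh+Q_h)$ reduces to $O(\epsilon h)\,\epsilon^{d/2}\|f\|_2\sum_k M_k\,\mu(kh+Q_h)$, and no Cauchy--Schwarz against $\sum_k M_k^2\,\mu(kh+Q_h)\lesssim\|f\|_2^2$ can control $\sum_k M_k\,\mu(kh+Q_h)$, since the complementary factor $\sum_k\mu(kh+Q_h)$ is infinite. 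The fix is to use a \emph{local} gradient bound: $\nabla\hat f$ is itself band-limited (it is $\widehat{-2\pi i x f}$, with $\|x f\|_2\lesssim\epsilon\|f\|_2$), so Plancherel--Polya gives $\sum_k\sup_{kh+Q_h}|\nabla\hat f|^2\,\mu(kh+Q_h)\lesssim\mathcal D^+(\mu)\,\epsilon^2\|f\|_2^2$, and then Cauchy--Schwarz between this sum and the $M_k^2$-sum yields $|\mathcal E|\lesssim\mathcal D^+(\mu)\,(\epsilon h)\|f\|_2^2$ as desired. The paper avoids this issue by a different decomposition: it discretizes on a lattice $\delta\mathbb Z^d/\epsilon$, uses Shannon sampling to evaluate the main term exactly, and controls the error through the reproducing identity $\hat F=\hat F*\psi$ with $\psi\in\mathcal S(\mathbb R^d)$ (Lemmas~\ref{lem2.2}--\ref{lem2.3}); that route also generalizes directly to the $N$-cube matrix setting of Theorem~\ref{th1.1}.
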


\begin{remark}\label{rmmark_frame}
Note that the theorem also has a ``Bessel" version, in which $A=0$ and only $D^+(\mu)$ plays a role, (See Theorem \ref{th1.2}).
\end{remark}

This theorem leads to the following corollary, showing that Beurling densities as limit of optimal frame bounds of small ball.

\medskip

\begin{corollary}\label{measure-frame2}
Under the assumptions of Theorem \ref{measure-frame1}, suppose that $\mathcal{D}^{+}(\mu)<\infty$.
Define $A_{\epsilon}$ and $B_{\epsilon}$ to be the optimal bounds for
the inequalities
\begin{equation}
A_{\epsilon}\,\|f\|_2^2\leq \int_{\mathbb{R}^d}|\hat{f}(\lambda)|^2\,d\mu(\lambda)
\leq B_{\epsilon}\, \|f\|_2^2, \ f\in L^2\left(B(a,\epsilon)\right),
\end{equation}
Then, $\lim_{\epsilon\rightarrow 0} A_{\epsilon}=
\mathcal{D}^{-}(\mu)$
and
$\lim_{\epsilon\rightarrow 0} B_{\epsilon}=\mathcal{D}^{+}(\mu)$.
\end{corollary}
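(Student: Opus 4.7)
The plan is to reduce the ball case to the cube case handled by Theorem \ref{measure-frame1}, via a geometric sandwich between inscribed and circumscribed cubes. First, the substitution $g(x):=f(x+a)$ yields $\|g\|_2=\|f\|_2$ and $|\hat g|=|\hat f|$, so the optimal constants $A_\epsilon,B_\epsilon$ do not depend on the center $a$, and we may assume $a=0$.

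Next, the standard inclusions $Q_{2\epsilon/\sqrt d}\subset B(0,\epsilon)\subset Q_{2\epsilon}$ between inscribed and circumscribed cubes give, by zero extension, $L^2(Q_{2\epsilon/\sqrt d})\subset L^2(B(0,\epsilon))\subset L^2(Q_{2\epsilon})$. Writing $A_h^Q$ and $B_h^Q$ for the optimal constants in the analogous frame-type inequality taken over $L^2(Q_h)$, the variational descriptions of these constants (the infimum of $\int|\hat f|^2\,d\mu/\|f\|_2^2$ over a smaller subspace can only increase, the supremum can only decrease) then yield
$$
A^Q_{2\epsilon}\le A_\epsilon\le A^Q_{2\epsilon/\sqrt d},\qquad B^Q_{2\epsilon/\sqrt d}\le B_\epsilon\le B^Q_{2\epsilon}.
$$

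It therefore suffices to establish $\lim_{h\to0^+}A_h^Q=\mathcal{D}^-(\mu)$ and $\lim_{h\to0^+}B_h^Q=\mathcal{D}^+(\mu)$; the corollary then follows by a squeeze as $\epsilon\to0^+$. This limit statement for cubes is a direct consequence of the ``moreover'' part of Theorem \ref{measure-frame1}: the outer bounds $A_h^Q\le\mathcal{D}^-(\mu)$ and $B_h^Q\ge\mathcal{D}^+(\mu)$ come for free from the first assertion, while the matching approximations $A_h^Q\ge\mathcal{D}^-(\mu)-\rho$ and $B_h^Q\le\mathcal{D}^+(\mu)+\rho$ are produced by the second assertion at a single scale $h_0$ and then propagated to all smaller $h$ by the same monotonicity used above ($A_h^Q$ non-increasing and $B_h^Q$ non-decreasing in $h$).

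The only delicate point I foresee is the degenerate case $\mathcal{D}^-(\mu)=0$: here Theorem \ref{measure-frame1} does not produce a positive lower frame bound, so for the upper inequality one must invoke the Bessel analogue mentioned in Remark \ref{rmmark_frame} to keep $B_h^Q$ under control, and observe separately that $A_\epsilon\equiv 0$ matches the claimed limit trivially. Beyond this bit of bookkeeping the argument is entirely a squeeze between two monotone cube families whose limits have already been computed by Theorem \ref{measure-frame1}.
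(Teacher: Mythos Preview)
Your argument is correct and follows essentially the same route as the paper, which deduces Corollary~\ref{measure-frame2} as the $N=1$ case of Corollary~\ref{cor2.2}: one squeezes $A_\epsilon$ between $\mathcal D^-(\mu)-\rho$ and $\mathcal D^-(\mu)$ (and similarly for $B_\epsilon$) using the two ``moreover'' clauses of Theorem~\ref{measure-frame1}, treating the case $\mathcal D^-(\mu)=0$ separately via the Bessel version. Your write-up is in fact slightly more careful than the paper's in two respects: you make the monotonicity in $h$ explicit (the paper's proof of Corollary~\ref{cor2.2} tacitly assumes the bounds produced at one scale persist at smaller scales), and you supply the cube--ball sandwich $Q_{2\epsilon/\sqrt d}\subset B(0,\epsilon)\subset Q_{2\epsilon}$ to pass from $Q_\epsilon$ to $B(a,\epsilon)$, a step the paper does not spell out since Corollary~\ref{cor2.2} is stated only for cubes.
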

 (Theorem \ref{measure-frame1}  will not be proved here
as they are  particular cases of Theorem \ref{th1.2} and Corollary \ref{cor2.2}, respectively,
which will be proved in section 2.)

\medskip

If $\Omega = \bigcup_{i=1}^{N}(a_i+Q_{\epsilon})$ is a 
finite union of disjoint cubes with side length $\epsilon>0$, 
Theorem \ref{measure-frame1} is no longer true since
the lower-frame bound inequality might fail under
the conditions $0<\mathcal{D}^-(\mu)
\le \mathcal{D}^+(\mu)<\infty$ even if $\epsilon$ is small.
For example,  $\mu=\sum_{n\in \mathbb{Z} }\,\delta_n$  is not
an $\mathcal{F}$-measure for the set $\Omega=[-\epsilon,\epsilon]\cup [1-\epsilon,1+\epsilon]$.
To see this, we consider $f = \chi_{[-\epsilon,\epsilon]}-\chi_{[1-\epsilon,1+\epsilon]}$. Then, for all $n
\in\mathbb{Z}$,
we have
$$
\hat{f}(n) = \int_{-\epsilon}^{\epsilon}\,e^{-2\pi i nx}\,dx-e^{2\pi i n}\,\int_{-\epsilon}^{\epsilon}\,e^{-2\pi i nx}\,dx=0.
$$
This means that the collection $\{e^{-2\pi i nx}\}_{n\in \mathbb{Z}}$ is not even complete
in $L^2(\Omega)$.  By introducing notions
of lower and upper density for Borel measures on $\mathbb{R}^d$
taking values in the  cone of positive-definite
matrices, we characterize
$\mathcal{B}$-measures and $\mathcal{F}$-measures
for $L^2(\Omega)$ in the case where $\Omega= \bigcup_{i=1}^{N}(a_i+Q_{\epsilon})$
and $\epsilon$ is small enough. This provides thus analogues of 
Theorem  \ref{measure-frame1} and Corollary \ref{measure-frame2}
for this more general situation. 

\medskip

After establishing these results, we will 
consider a related problem which involves a uniformity condition
on the frame bounds with respect to a subgroup $G$ of  ${\mathbb R}$.

\begin{definition}\label{def3} {\rm Let $\mu$ positive Borel measure on $\mathbb{R}$ and let
$G$ be a subgroup of $\mathbb{R}$. If $A,B>0$, we say that $\mu$ is a {\it uniform $\mathcal{F}$-measure}
for $G$ with limiting lower bound larger than or equal to $A$ and limiting upper frame bound
less than or equal to  $B$, if  given any $x_1,\dots ,x_M\in G$ and any $\delta>0$,
there exists $\epsilon>0$ such that}
 \begin{equation}\label{eq3.1}
(A-\delta)\,\|f\|_2^2\leq \int_{\mathbb{R}^d}|\hat{f}(\lambda)|^2\,d\mu(\lambda)
\leq (B+\delta)\, \|f\|_2^2, \ f\in L^2\left(\Omega\right),
\end{equation}
{\rm for $\Omega=\bigcup_{j=1}^{N}(x_j+Q_{\epsilon})$.
We denote the collection of such measures by $\mathcal{F}(G,A,B)$.
The notion of {\it uniform $\mathcal{B}$-measure} for $G$ with limiting upper frame bound
less than or equal to  $B$ is defined in a similar way and the collection of such
measures is denoted by $\mathcal{B}(G,B)$. Finally, the measures in the
collection  $\mathcal{F}(G,A,A)$ are called  {\it uniform tight $\mathcal{F}$-measures} with limiting tight
frame bound $A$
for $G$.}
\end{definition}

\medskip
The construction of Borel measures $\mu$ in  $\mathcal{F}(G,A,B)$
 can be viewed as a continuous version of a {\it compressed sensing} problem (See \cite{[FR]} for details about compressed sensing).
A vector $v$ is a finite-dimensional space is $s$-sparse (where $s\ge 1$ is an integer)
 if it has at most $s$ non-zero components. In general, the indices corresponding to the 
non-zero components of $v$ are unknown.
In its discrete and finite-dimensional setting, the compressed sensing problem
consist in trying to recover an $s$-sparse vector $v$  in $\mathbb{C}^d$
by computing the inner products $\langle v, u_i\rangle$ with
some fixed  vectors $u_i$, $i\in I$. As can be expected, the smaller $s$ is, the fewer
vectors $u_i$ are needed for the recovery of the data. If we think of a function
$f$ in $L^2(E)$ as a vector with non-zero components concentrated on the set $E$
and if $\mu=\delta_{\Lambda}$, for some discrete set $\Lambda$,
the fact that $\mu\in \mathcal{F}(G,A,B)$ allows for the recovery of any such 
function from the knowledge of the
inner products $\langle f, e_\lambda\rangle_{L^2(E)}$ 
where  $e_\lambda(x)=e^{-2\pi i \lambda\cdot x}\,\chi_E(x)$.
This will be possible if $f$ is sparse enough in the sense that it should be supported
in  a small enough neighborhood of a finite subset of the group $G$.
The fact that the constants $A,B$ are independent of the points chosen
in the group $G$ implies that the robustness of the reconstruction
formula is also independent of the exact location of this neighborhood.

\medskip
One trivial element inside $ \mathcal{F}(G,1,1)$ for any subgroup $G$ is the Lebesgue measure on ${\mathbb R}$. We are particularly interested in the existence of discrete measures inside these collections.  We first consider the problem with $G$ being a finitely-generated subgroup of ${\mathbb R}$ and completely solve this problem with the help of Theorem \ref{measure-frame1}.
It turns out that, interestingly, in the case of measures of the form $\mu=\delta_\Lambda$,
$\Lambda\subset \mathbb{R}$, the answer to these questions is  related to the probabilistic notion
of ``equidistributed sequence''  or,  more specifically, that of``well-distributed sequence'' 
(Theorem \ref{frame1}). We will show, in particular, that 
{\it if $G = a{\mathbb Z}$, $\delta_{\Lambda}\in{\mathcal F}(G,A,A)$ if and only if 
${\mathcal D}(\Lambda) = A$ and 
$\Lambda$ is a well-distributed sequence (mod $a^{-1}$)} (Corollary \ref{well-dist}). 
Our results can also be interpreted as characterizations of certain inequalities
satisfied by almost-periodic functions with spectrum in the group $G$ (Theorem \ref{a-p}).

\medskip

Finally, we consider the problem for the whole group ${\mathbb R}$. Using a recent result of 
S.~Nitzan, A.~Olevskii and A.~Ulanovskii (\cite{NOU}) about the existence of Fourier frames on 
any unbounded set of finite measure, which is 
based on the solution of the Kadison-Singer problem, 
we deduce {\it the existence of a discrete $\Lambda$ such that 
$\delta_{\Lambda}\in{\mathcal F}({\mathbb R}, A,B)$ for some $A,B$ ($A<B$)}. 
It would be reasonable to think that the measure associated with  a 
simple quasicrystal in the sense of Meyer (\cite{Me1})
may belong to some space ${\mathcal F}({\mathbb R}, A,B)$ in view of 
the results on universal sampling obtained in \cite{MM}, 
but we will show that this is never the case. Since the solution to 
the celebrated Kadison-Singer conjecture in \cite{MSS}  
is a probabilistic result,
it would be interesting, in line with the current research, 
to find some deterministic discrete sets with associated measure belonging to some space
 ${\mathcal F}({\mathbb R}, A,B)$.

\medskip

We organize our paper as follows: in Section 2, we provide the basic preliminary results on Beurling density and introduce the Beurling densities of  Borel measures on $\mathbb{R}^d$
taking values in the  cone of positive-definite
matrices. We will prove the matrix version of Theorem \ref{measure-frame1} and Corollary \ref{measure-frame2} in Section 3. In Section 4, we characterize the measures in ${\mathcal F}(G,A,B)$ and ${\mathcal B}(G,B)$. We study the case $G = {\mathbb R}$ in the last section. 

\medskip
\noindent{\bf Local square integrability of the Fourier transform.} 

\medskip
Before we develop our theory in the next section, we mention an additional consequence of Theorem \ref{measure-frame1}. Note that by the implication (b) $\Longrightarrow$ (a) in Theorem \ref{measure-frame1}, if $\mu$ is translation-bounded (i.e. ${\mathcal D}^{+}(\mu)<\infty$), then the ``analysis'' operator
$$
T:L^2(B(a, \epsilon))\to L^2(\mu):f\mapsto \hat f
$$
is bounded for all $a\in {\mathbb R}^d$ and $\epsilon>0$ small enough, where $B(a,\epsilon)$ is the Euclidean ball of radius $\epsilon$ centered at $a$. and thus so is its adjoint, the ``synthesis'' operator
$T^*: L^2(\mu) \to L^2(B(a, \epsilon))$.
It is easy to see that if $\mu$ is translation bounded,
and $F\in L^2(\mu)$, then $F d\mu$ defines a tempered distribution on $\mathbb{R}^d$.
In that case, taking $g\in C^\infty_0(B(a,\epsilon))$,
we have
$$
( T^*F, g)_2=( F,T g)_{L^2(\mu)}=
\int_{\mathbb{R}^d} F(\lambda)\, \overline{\hat g(\lambda)}\,d \mu(\lambda)
=\langle \mathcal{F}^{-1}(F\,d\mu), \overline{ g} \rangle,
$$
where the bracket $\langle\cdot,\cdot \rangle$ represents the duality
between tempered  distributions in $\mathcal{S}'(\mathbb{R}^d)$
and the test functions in the Schwartz space  $\mathcal{S}'(\mathbb{R}^d)$
and where
$\mathcal{F}^{-1}$ denote the (distributional)
inverse Fourier transform.
It follows that, if the positive Borel measure $\mu$ is translation bounded
and $F\in L^2(\mu)$, then the synthesis operator $T^*$ is defined by
\begin{equation}\label{adjoint}
T^*: L^2(\mu) \to L^2(B(a, \epsilon)): F \mapsto \left.\mathcal{F}^{-1}(F\,d\mu)\right|_{B(a,\epsilon)}.
\end{equation}
In particular, this implies that the distribution defined by $\mathcal{F}^{-1}(F\,d\mu)$ is
locally square-integrable on $\mathbb{R}^d$. This property was actually proved in 1990 by R. Strichartz (\cite[Lemma 4.2]{Str}) using a different method. We now show that the converse of this statement is also true: if $\mu$
is a positive tempered measure on $\mathbb{R}^d$ and
$\mathcal{F}^{-1}(F\,d\mu)$ is
locally square-integrable on $\mathbb{R}^d$ for every $F\in  L^2(\mu)$,
the $\mu$ must be translation-bounded. Indeed it is easily checked that the mapping
defined in (\ref{adjoint}) is closed and it is thus bounded by the closed graph theorem.
 Since the boundedness of
$T^*$ is equivalent to the boundedness of $T$, it follows that  $\mu$ is translation bounded
using the implication (a) $\Longrightarrow$ (b) in Theorem \ref{measure-frame1}. 
We note also, that since the property of
being square-integrable on every ball
$B(a,\epsilon)$, with center $a\in \mathbb{R}^d$ and fixed radius $\epsilon>0$, is clearly equivalent
to being
square-integrable on every ball
$B(a,r)$, where $r>0$ is arbitrary, it follows that if $\mu$ is a $\mathcal{B}$-measure for
$L^2(B(a,\epsilon))$ for some $a\in \mathbb{R}^d$ and some $\epsilon>0$, then
 $\mu$ is a $\mathcal{B}$-measure for $L^2(B(x,r))$, for any $x\in \mathbb{R}^d$ and any $r>0$
(but with the Bessel constant dependent on $r$). We summarize these conclusions in 
the following theorem.

\begin{theorem}\label{th0.3}
Let $\mu$
be a positive tempered measure on $\mathbb{R}^d$. Then $\mathcal{F}^{-1}(F\,d\mu)$ is
locally square-integrable on $\mathbb{R}^d$ for every $F\in  L^2(\mu)$ if and only if  $\mu$ must be translation-bounded.
\end{theorem}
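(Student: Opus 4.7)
The plan is to formalize the sketch given in the paragraph preceding the statement, combining the Bessel version of Theorem~\ref{measure-frame1} (mentioned in Remark~\ref{rmmark_frame}) with the closed graph theorem.

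For the ``only if'' direction, I would fix an arbitrary $a\in\mathbb{R}^d$ and invoke the implication (b)$\Rightarrow$(a) of the Bessel version of Theorem~\ref{measure-frame1}: the assumption $\mathcal{D}^+(\mu)<\infty$ produces some $\epsilon>0$ for which the analysis map $T\colon L^2(B(a,\epsilon))\to L^2(\mu)$, $f\mapsto\hat f$, is bounded. A short test-function computation, already laid out explicitly in the excerpt, identifies the Hilbert-space adjoint $T^*$ with the map $F\mapsto \mathcal{F}^{-1}(F\,d\mu)|_{B(a,\epsilon)}$. Boundedness of $T^*$ on $L^2(\mu)$ is exactly the desired local $L^2$-statement; since $a$ is arbitrary and enlarging the radius from $\epsilon$ to any $r>0$ only changes the constant, local square integrability on $\mathbb{R}^d$ follows.

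For the converse, the plan is to run the closed graph theorem on the linear map $S\colon L^2(\mu)\to L^2(B(a,\epsilon))$ defined by $SF=\mathcal{F}^{-1}(F\,d\mu)|_{B(a,\epsilon)}$, which is well-defined by hypothesis. Suppose $F_n\to F$ in $L^2(\mu)$ and $SF_n\to g$ in $L^2(B(a,\epsilon))$. Temperedness of $\mu$ guarantees that every Schwartz function lies in $L^2(\mu)$ (rapid decay against polynomial growth), so Cauchy--Schwarz in $L^2(\mu)$ forces $F_n\,d\mu\to F\,d\mu$ in $\mathcal{S}'(\mathbb{R}^d)$; applying $\mathcal{F}^{-1}$ and testing against $\psi\in C_c^\infty(B(a,\epsilon))$ then identifies $g$ with $SF$. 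Hence $S$ is bounded, so $T=S^*$ is bounded, meaning $\mu$ is a $\mathcal{B}$-measure for $L^2(B(a,\epsilon))$. The (a)$\Rightarrow$(b) direction of the Bessel version of Theorem~\ref{measure-frame1} then yields $\mathcal{D}^+(\mu)<\infty$, which is equivalent to translation-boundedness.

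The main obstacle I anticipate is the bookkeeping for the closed-graph step, specifically verifying with only temperedness of $\mu$ that $L^2(\mu)$-convergence of $F_n$ to $F$ implies convergence of $F_n\,d\mu$ to $F\,d\mu$ in $\mathcal{S}'(\mathbb{R}^d)$. This hinges on the embedding $\mathcal{S}(\mathbb{R}^d)\hookrightarrow L^2(\mu)$ for tempered positive $\mu$, which itself is elementary but needs to be recorded cleanly. Once this embedding is in hand, all remaining steps are direct invocations of Hilbert-space duality and the quoted theorem.
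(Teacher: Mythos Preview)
Your proposal is correct and follows essentially the same route as the paper's own argument in the paragraph preceding the theorem: identify $T^*$ with $F\mapsto\mathcal{F}^{-1}(F\,d\mu)|_{B(a,\epsilon)}$, use the Bessel version of Theorem~\ref{measure-frame1} for one direction, and the closed graph theorem for the other. One cosmetic point: you have swapped the labels ``if'' and ``only if''---the paragraph in which you assume $\mathcal{D}^+(\mu)<\infty$ is the \emph{if} direction of the biconditional, not the \emph{only if}.
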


%
%
%
More generally, if $E=\Omega$ where $\Omega$ is an open subset of
$\mathbb{R}^d$ with $|\Omega|<\infty$, the Bessel inequality
$$
 \int_{\mathbb{R}^d}\,|\hat f(\xi)|^2\,d\mu(\xi)
\le B\,\|f\|_2^2,\quad f\in L^2(\Omega).
$$
is equivalent to the  property
$$
\mathcal{F}^{-1}(F\,d\mu)|_\Omega \in
L^2(\Omega),\quad F\in L^2(\mu).
$$
If $\Omega$ is bounded, this property will hold if and only if $\mu$ is translation bounded
by Theorem \ref{th0.3}.
On the other hand, if $\Omega$ is unbounded but with finite Lebesgue measure,
the Bessel inequality might fail for translation bounded
measures.
For example, if $\mu=\delta_{\mathbb{Z}}=\sum_{n\in \mathbb{Z} }\,\delta_n$,
and, if $F\in L^2(\mu)$, we have
$$
F\,d\mu
=\sum_{n\in \mathbb{Z} }\,c_n\,\delta_n\quad \text{with}\,\,\sum_{n\in \mathbb{Z} }\,|c_n|^2<\infty.
$$
It follows that the set $\{\mathcal{F}^{-1}(F\,d\mu),\,\,F\in L^2(\mu)\}$
is exactly the collection of  locally  square-integrable  $1$-periodic function
on the real line.
If we take
$$
\Omega=\cup_{j\ge 1}\,(j,j+1/j^2)
$$
and define $H(x)$ to be the $1$-periodic function
with
$$
H(x)=x^{-\alpha},\quad 0<x\le 1,
$$
it is easily checked that $H$ is locally square-integrable if and only if $\alpha<1/2$.
In that case, we have thus $H=\mathcal{F}^{-1}(F\,d\mu)$ for some  $F\in L^2(\mu)$.
However, the restriction of $H$ to $\Omega$ is square-integrable
if and only if
$$
\sum_{j=1}^\infty\,\int_{0}^{1/j^2}\,x^{-2\alpha}\,dx
=\frac{1}{1-2\alpha }\,\sum_{j=1}^\infty\,\frac{1}{j^{2-4\alpha}}<\infty
$$
i.e. $\alpha<1/4$. Hence, if we take $\alpha$ with $1/4\le \alpha<1/2$,
$\mathcal{F}^{-1}(F\,d\mu)|_\Omega \not\in
L^2(\Omega)$ and the Bessel property fails.

\section{Densities of  positive matrix-valued measures}

We start by mentioning some properties equivalent to ``translation-boundedness".
\begin{proposition}[\cite{Ga}]\label{translation-bounded} Let $\mu$ be a positive Borel measure on $\mathbb{R}^d$.
Then, the following are equivalent:
\begin{enumerate}[(a)]
\item $\mu$ is translation bounded.
\item  $\mathcal{D}^{+}(\mu)<\infty$.
\item There exists $f\in L^1(\mathbb{R}^d)$  with $f\ge 0$, $\int f\,dx=1$
and a constant $C>0$ such that $\mu\ast f\leq C$ a.e.~on $\mathbb{R}^d$.
\end{enumerate}
\end{proposition}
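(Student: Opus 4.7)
The plan is to close the cycle $(a) \Rightarrow (c) \Rightarrow (b) \Rightarrow (a)$, noting that both $(a) \Leftrightarrow (b)$ reduce to elementary covering computations and that all the real work lies in $(c) \Rightarrow (b)$.

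First, for $(a) \Rightarrow (c)$, I would make the elementary choice $f = \chi_{[0,1]^d}$. A direct calculation gives $(\mu \ast f)(x) = \mu(x - [0,1]^d)$, which is the $\mu$-measure of a translate of the unit cube, hence bounded by the translation-boundedness hypothesis. For $(b) \Rightarrow (a)$, the assumption $\mathcal{D}^+(\mu) < \infty$ provides some $h_0 \ge 1$ and a constant $M$ with $\mu(x + Q_{h_0}) \le M\,h_0^d$ for every $x$; since any unit cube $x + [0,1]^d$ sits inside a single translate of $Q_{h_0}$, we recover a uniform bound on $\mu(x + [0,1]^d)$.

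The substantive step is $(c) \Rightarrow (b)$. Since $f \ge 0$ and $\int f = 1$, there is some threshold $\eta > 0$ for which $\{f \ge \eta\}$ has positive Lebesgue measure; intersecting with a large cube produces a \emph{bounded} measurable $E \subset Q_R$ with $|E| > 0$ and $f \ge \eta\,\chi_E$ pointwise. Convolving against the positive measure $\mu$ preserves this inequality, so the hypothesis $\mu \ast f \le C$ a.e.\ upgrades to
\[
\mu(x - E) = (\mu \ast \chi_E)(x) \le \frac{C}{\eta} \quad \text{for a.e.~}x \in \mathbb{R}^d.
\]
To convert this into a density estimate I would apply Fubini on a large cube: for $h > 2R$,
\[
\frac{C}{\eta}\,h^d \;\ge\; \int_{y + Q_h}\!\!\mu(x - E)\,dx \;=\; \int \bigl|(y + Q_h) \cap (z + E)\bigr|\,d\mu(z) \;\ge\; |E|\,\mu(y + Q_{h-2R}),
\]
where the last inequality uses that $E \subset Q_R$ forces $z + E \subset y + Q_h$ whenever $z \in y + Q_{h-2R}$. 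Dividing by $h^d$ and letting $h \to \infty$ gives the uniform estimate $\mathcal{D}^+(\mu) \le C/(\eta\,|E|) < \infty$.

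The main obstacle is precisely the last step: the hypothesis only gives an \emph{a.e.} bound on $\mu \ast f$, while translation-boundedness is a \emph{pointwise} supremum statement. Getting across this gap requires both the extraction of a bounded ``lower envelope'' $\eta\,\chi_E$ from the merely $L^1$ density $f$ and the Fubini averaging trick, which promotes the a.e.\ bound on $\mu(x - E)$ to a uniform bound on the mass of arbitrarily large cubes.
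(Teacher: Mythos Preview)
The paper does not supply its own proof of this proposition: it is quoted directly from \cite{Ga}, so there is no in-paper argument to compare your attempt against.

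Your argument is correct and is essentially the standard one. The implications $(a)\Rightarrow(c)$ (take $f=\chi_{[0,1]^d}$) and $(b)\Rightarrow(a)$ are indeed one-line covering observations. For $(c)\Rightarrow(b)$, the two ideas you isolate---minorize $f$ by $\eta\,\chi_E$ with $E$ bounded of positive measure, then integrate the a.e.\ bound $\mu(x-E)\le C/\eta$ over a large cube and swap by Tonelli---are precisely the right ones, and in fact they constitute a special case of the proof of Theorem~\ref{Ga-th1}(a), which the paper also imports from \cite{Ga} without argument.

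One small technical caveat worth flagging: your Tonelli swap needs $\mu$ to be $\sigma$-finite, which the bare statement does not assume. In practice this is harmless---the paper works throughout with locally finite measures, and the cited source presumably does as well---but if you want to be fully self-contained you can argue as follows. For any compact $K$, set $\mu_K=\mathbf{1}_K\cdot\mu$; if $\mu$ happened not to be locally finite one cannot yet say $\mu_K$ is finite, but one can instead take an increasing sequence of \emph{finite} positive measures $\nu_n\le\mu$ (for instance $\nu_n=\min(n,\tfrac{d\mu}{d(\mu+\lambda)})\cdot\mathbf{1}_{Q_n}\,d(\mu+\lambda)$ restricted suitably, or simply observe that every positive Borel measure on $\mathbb{R}^d$ dominates its semifinite part). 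Your Fubini computation applied to each $\nu_n$ gives $\nu_n(y+Q_{h-2R})\le C h^d/(\eta|E|)$ uniformly in $n$, and taking the supremum over all such $\nu_n\le\mu$ recovers the bound for $\mu$ itself. Alternatively, just add the standing hypothesis ``locally finite'' to match the rest of the paper; the content of your proof is unaffected.
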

As the last condition in the previous proposition shows, the notion
of upper Beurling density is related to certain convolution inequalities satisfied by the measure
$\mu$. More generally, we have the following result, which will be useful later on.
\begin{theorem}[\cite{Ga}]\label{Ga-th1} Let $\mu$  be a positive Borel measure on $\mathbb{R}^d$
and let $h\in L^1(\mathbb{R}^d)$ with $h\ge 0$.
Let $A,B>0$ be constants.  Then
\begin{enumerate}[(a)]
\item If $\mu\ast h\leq B$ a.e.~on $\mathbb{R}^d$, then
$
\mathcal{D}^{+}(\mu)\,\int\, h \,dx\leq B.
$
\item If $\mu$ is translation-bounded and $A\leq \mu\ast h$ a.e.~on $\mathbb{R}^d$, then
$
A\leq  \mathcal{D}^{-}(\mu)\,\int\, h \,dx\
$
\end{enumerate}

\end{theorem}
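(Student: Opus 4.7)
The plan is to extract Beurling-density bounds from the pointwise inequalities on $\mu \ast h$ by averaging over large cubes $x_0 + Q_R$ and invoking Fubini's theorem in the form
$$
\int_{x_0 + Q_R} (\mu \ast h)(x)\, dx = \int_{\mathbb{R}^d} h(u)\, \mu\bigl((x_0 - u) + Q_R\bigr)\, du.
$$
The geometry of cubes yields two clean inclusions: for $u \in Q_M$, both $x_0 + Q_{R-M} \subset (x_0 - u) + Q_R$ and $(x_0 - u) + Q_R \subset x_0 + Q_{R+M}$ hold. These are the levers that convert the Fubini integrand into $\mu(x_0 + Q_{R \mp M})$, and hence into Beurling densities after normalization by $R^d$.

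For part (a), I would integrate $\mu \ast h \le B$ over $x_0 + Q_R$, bounding the left-hand side above by $B R^d$. Restricting the Fubini integral to $u \in Q_M$ and applying the first inclusion gives $\mu(x_0 + Q_{R-M})\, \int_{Q_M} h \le B R^d$. Taking the supremum over $x_0$, dividing by $(R - M)^d$, and letting $R \to \infty$ first, then $M \to \infty$ (monotone convergence, $\int_{Q_M} h \uparrow \int h$), yields $\mathcal{D}^+(\mu) \int h \le B$. No translation-boundedness hypothesis is needed here; the conclusion itself forces $\mathcal{D}^+(\mu) < \infty$ as soon as $\int h > 0$.

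For part (b), I would integrate $\mu \ast h \ge A$ over $x_0 + Q_R$ to get $A R^d$ as a lower bound on the Fubini integral. The new subtlety is that $h$ need not have compact support, so the integrand for $u \in Q_M^c$ is not naturally dominated by any $\mu(x_0 + Q_{R+M})$. I would split $h = h \chi_{Q_M} + h \chi_{Q_M^c}$; the first piece is handled by the second inclusion and contributes at most $\mu(x_0 + Q_{R+M})\, \int h$, while the tail piece is controlled by the translation-boundedness of $\mu$, which (via Proposition \ref{translation-bounded}) provides a constant $C_\mu$ with $\mu(y + Q_R) \le C_\mu (R + 1)^d$ uniformly in $y$. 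This yields an error of size at most $C_\mu (R+1)^d \int_{Q_M^c} h$. Taking the infimum over $x_0$, dividing by $(R+M)^d$, letting $R \to \infty$, and finally $M \to \infty$ (so the tail $\int_{Q_M^c} h$ vanishes) delivers $A \le \mathcal{D}^-(\mu) \int h$.

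The main obstacle is precisely this tail estimate in part (b): the translation-boundedness hypothesis is forced by the need to discard the tail of $h$ uniformly in the scale $R$ of the averaging cube, and the argument would break down without it.
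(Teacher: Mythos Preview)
The paper does not actually prove this statement: it is quoted from the external reference \cite{Ga} and used as a black box, so there is no in-paper argument to compare against. That said, your proposal is correct and is the standard way such convolution--density inequalities are established. The Fubini identity
\[
\int_{x_0+Q_R}(\mu*h)(x)\,dx=\int_{\mathbb{R}^d}h(u)\,\mu\bigl((x_0-u)+Q_R\bigr)\,du
\]
together with the cube inclusions you state does exactly the job, and your handling of the tail $\int_{Q_M^c}h$ in part~(b) via the uniform bound $\mu(y+Q_R)\le C_\mu(R+1)^d$ (a direct consequence of translation-boundedness and a covering by unit cubes) is precisely where that hypothesis enters. One cosmetic point: in part~(b) you bound the $Q_M$-piece by $\mu(x_0+Q_{R+M})\int h$ rather than $\mu(x_0+Q_{R+M})\int_{Q_M}h$; since you only need an upper bound there, this is harmless. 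Also implicit in part~(a) is that $\int_{Q_M}h>0$ for $M$ large, which follows once $\int h>0$ (the case $\int h=0$ being trivial).
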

If we assume now that $\mu$ is an $\mathcal{F}$-measure for $L^2(E)$, then applying the frame
inequalities (\ref{measure-frame}) to the function $g(x)=e^{2\pi i \xi\cdot x}\,\overline{f(x)}$,
where $\xi\in\mathbb{R}^d$ and $f\in L^2(E)$, we obtain that
$$
A\,\|f\|_2^2\leq \int_{\mathbb{R}^d}\,|\hat f(\xi-\lambda)|^2\,d\mu(\lambda)
\leq B\,\|f\|_2^2,\quad f\in L^2(E).
$$
which can also be written as
$$
A\,\|f\|_2^2\leq \left(\mu\ast |\hat f|^2\right)(\xi)
\leq B\,\|f\|_2^2,\quad \xi\in\mathbb{R}^d,\,\,f\in L^2(E).
$$
Since $\int_{\mathbb{R}^d}\,|\hat f|^2\,d\lambda=\|f\|_2^2$
by Plancherel's theorem, we can apply Theorem \ref{Ga-th1}
to the function $h:=|\hat f|^2$ to obtain
that
$$
0<A\le \mathcal{D}^{-}(\mu)\le \mathcal{D}^+(\mu)\leq B<\infty.
$$
Of course, the same argument show that, if $\mu$ is a $\mathcal{B}$-measure for
$L^2(E)$ with Bessel constant $B$, then
$$
\mathcal{D}^+(\mu)\leq B<\infty.
$$
This gives a proof for one of the implications in Theorem \ref{measure-frame1}.

We now define appropriate notions of densities for positive matrix-valued measures
generalizing the known notions of Beurling densities defined in the introduction.
\begin{definition} We will denote by $\mathcal{MP}_N( \mathbb{R}^d)$ the set of $N\times N$
matrices $\check \mu=(\mu_{i,j})$ whose entries $\mu_{i,j}$, $1\le i,j\le N$,
 are complex, locally finite  Borel measures
on $\mathbb{R}^d$
and are positive-definite in the sense that, for any $\mathbf{v}=(v_1,\dots,v_N)\in  \mathbb{C}^N$, we have
\begin{equation}\label{muv}
\check \mu_{\mathbf{v}}:=\sum_{i,j=1}^N\,\mu_{i,j}\,v_i\overline{v_j} \ge 0,
\end{equation}
i.e.~the left-hand side of the previous inequality defines a positive measure on $\mathbb{R}^d$.
\end{definition}

The lower and upper Beurling densities of an element $\check \mu$ of $\mathcal{MP}_N( \mathbb{R}^d)$
can then be defined respectively as
$$
\mathcal{D}^{-}_N(\check{\mu}):=\inf\left\{\mathcal{D}^{-}(\check \mu_{\mathbf{v}}):
\mathbf{v}\in  \mathbb{C}^N,\,\,\|\mathbf{v}\|_2=1\right\}
$$
and
$$
\mathcal{D}^{+}_N(\check{\mu}):=\sup\left\{\mathcal{D}^{+}(\check \mu_{\mathbf{v}}):
\mathbf{v}\in  \mathbb{C}^N,\,\, \|\mathbf{v}\|_2=1\right\}.
$$
If $E$ is a bounded Borel subset of $\mathbb{R}^d$ and  $\check \mu\in\mathcal{MP}_N( \mathbb{R}^d)$,
the $N\times N$ matrix $\check \mu(E)$ with (complex) entries  $\mu_{i,j}(E)$ is positive-definite
by definition. Its eigenvalues are thus real and non-negative and we can define
$\lambda_{\text{max}}(\check{\mu},E)$ and
$\lambda_{\text{min}}(\check{\mu},E)$, to be the largest and smallest eigenvalue of
$\check \mu(E)$, respectively.
The following lemma provides an alternative definition of the
lower and upper Beurling densities for elements of  $\mathcal{MP}_N( \mathbb{R}^d)$.
\begin{lemma}\label{altdef}
Let $\check \mu \in \mathcal{MP}_N( \mathbb{R}^d)$.
Then, we have
$$
\mathcal{D}^{+}_N(\check{\mu})=
\limsup_{h\rightarrow\infty}\,\sup_{x\in \mathbb{R}^d}\,
\frac{\lambda_{\text{max}}(\check{\mu},x+Q_{h})}{h^d}
$$
and, if $\mathcal{D}^{+}_N(\check{\mu})<\infty$, we have also
$$
\mathcal{D}^{-}_N(\check{\mu})=
\liminf_{h\rightarrow\infty}\,\inf_{x\in \mathbb{R}^d}\,
\frac{\lambda_{\text{min}}(\check{\mu},x+Q_{h})}{h^d}.
$$
\end{lemma}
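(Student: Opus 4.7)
The plan hinges on the standard variational characterization: for any positive-definite Hermitian matrix $M$, $\lambda_{\text{max}}(M) = \sup_{\|\mathbf{v}\|=1}\langle M\mathbf{v},\mathbf{v}\rangle$ and $\lambda_{\text{min}}(M) = \inf_{\|\mathbf{v}\|=1}\langle M\mathbf{v},\mathbf{v}\rangle$. Applied to $M = \check\mu(x+Q_h)$, this gives $\lambda_{\text{max}}(\check\mu, x+Q_h) = \sup_{\|\mathbf{v}\|=1}\check\mu_{\mathbf{v}}(x+Q_h)$ and analogously for the min. Two of the four inequalities are then immediate: for any fixed unit $\mathbf{v}$, the pointwise bound $\check\mu_{\mathbf{v}}(x+Q_h)\le \lambda_{\text{max}}(\check\mu,x+Q_h)$ passes to $\limsup\sup$, yielding $\mathcal{D}^+(\check\mu_{\mathbf{v}})$ bounded by the RHS of the upper formula; taking the sup over $\mathbf{v}$ gives $\mathcal{D}^+_N(\check\mu)\le \text{RHS}$. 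The symmetric argument with $\check\mu_{\mathbf{v}}\ge \lambda_{\text{min}}$ yields $\mathcal{D}^-_N(\check\mu)\ge \text{RHS}$ of the lower formula.

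For the reverse inequality in the upper density statement (assuming the RHS is finite, otherwise there is nothing to prove since the RHS always dominates $\mathcal{D}^+_N(\check\mu)$), pick $R'$ strictly less than the RHS and extract sequences $h_n\to\infty$, $x_n\in\mathbb{R}^d$ and unit vectors $\mathbf{v}_n$ (eigenvectors of $\check\mu(x_n+Q_{h_n})$ for the maximal eigenvalue) with $\check\mu_{\mathbf{v}_n}(x_n+Q_{h_n})>R'h_n^d$. By compactness of the unit sphere in $\mathbb{C}^N$ a subsequence satisfies $\mathbf{v}_n\to \mathbf{v}$. The task is to show that for this single limit vector, $\mathcal{D}^+(\check\mu_{\mathbf{v}})\ge R'$, which would finish the argument since $R'$ is arbitrary.

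The key continuity estimate is
$$
|\check\mu_{\mathbf{v}}(E)-\check\mu_{\mathbf{v}_n}(E)|\le 2\,\|\mathbf{v}-\mathbf{v}_n\|\,\sum_{i,j=1}^N |\mu_{i,j}(E)|.
$$
Applying positive-definiteness of $\check\mu(E)$ entrywise gives $|\mu_{i,j}(E)|^2\le \mu_{i,i}(E)\,\mu_{j,j}(E)$, hence $\sum_{i,j}|\mu_{i,j}(E)|\le N\sum_i \mu_{i,i}(E)$. Since each diagonal entry equals $\check\mu_{e_i}$, its upper Beurling density is bounded by $\mathcal{D}^+_N(\check\mu)<\infty$, so each $\mu_{i,i}$ is translation-bounded and $\sum_{i,j}|\mu_{i,j}(x+Q_h)|\le C\,h^d$ uniformly in $x$ for $h$ large. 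Plugging $E=x_n+Q_{h_n}$ in and dividing by $h_n^d$ yields $\check\mu_{\mathbf{v}}(x_n+Q_{h_n})/h_n^d\ge R'-o(1)$, so $\mathcal{D}^+(\check\mu_{\mathbf{v}})\ge R'$ as desired.

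The lower-density reverse inequality is obtained by the symmetric procedure: pick $R'$ larger than the RHS, find $h_n,x_n$ and unit $\mathbf{v}_n$ (this time eigenvectors for the minimal eigenvalue) with $\check\mu_{\mathbf{v}_n}(x_n+Q_{h_n})<R'h_n^d$, pass to a convergent subsequence, and apply the same continuity estimate to obtain $\mathcal{D}^-(\check\mu_{\mathbf{v}})\le R'$, so $\mathcal{D}^-_N(\check\mu)\le R'$. The hypothesis $\mathcal{D}^+_N(\check\mu)<\infty$ enters here precisely to make the continuity estimate effective. The main obstacle is the last step: justifying that the off-diagonal complex measures $\mu_{i,j}$ inherit translation-boundedness from the positive-definiteness of $\check\mu$, together with the fact that one must trade a single limit vector $\mathbf{v}$ against a family of growing sets $x_n+Q_{h_n}$ whose locations are essentially uncontrolled.
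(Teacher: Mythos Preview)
Your approach is essentially the same as the paper's: both arguments extract eigenvectors $\mathbf{v}_n$ along a maximizing (resp.\ minimizing) sequence $(x_n,h_n)$, pass to a convergent subsequence $\mathbf{v}_n\to\mathbf{v}$ by compactness of the unit sphere, and use the uniform bound $|\mu_{ij}(x+Q_h)|\le (\mu_{ii}(x+Q_h)\mu_{jj}(x+Q_h))^{1/2}\le C h^d$ coming from translation-boundedness of the diagonal measures to transfer the estimate from $\mathbf{v}_n$ to $\mathbf{v}$. The paper packages this last step as convergence of the normalized matrices $G_n=\check\mu(x_n+Q_{h_n})/h_n^d$, whereas you write it as a direct continuity bound $|\check\mu_{\mathbf{v}}(E)-\check\mu_{\mathbf{v}_n}(E)|\le 2\|\mathbf{v}-\mathbf{v}_n\|\sum_{i,j}|\mu_{ij}(E)|$; these are equivalent.

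One slip: your dismissal of the case where the right-hand side of the upper-density formula is infinite is misstated. You write ``there is nothing to prove since the RHS always dominates $\mathcal{D}^+_N(\check\mu)$,'' but that is the easy direction; for the reverse inequality you would need $\mathcal{D}^+_N(\check\mu)=\infty$. The correct reduction (and the one the paper uses) is to assume $\mathcal{D}^+_N(\check\mu)<\infty$, since if $\mathcal{D}^+_N(\check\mu)=\infty$ the inequality $\text{RHS}\le\mathcal{D}^+_N(\check\mu)$ is automatic. Your own argument already uses $\mathcal{D}^+_N(\check\mu)<\infty$ to make the continuity estimate work, so simply replace the parenthetical with this assumption.
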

\begin{proof} Using standard properties of positive-definite matrices, we have, for any
bounded Borel subset $E$ of $\mathbb{R}^d$ and any  $\mathbf{v}\in  \mathbb{C}^N$, that
$$
\lambda_{\text{min}}(\check{\mu},E)\,\|\mathbf{v}\|_2^2
\le \sum_{i,j=1}^N\,\mu_{i,j}(E)\,v_i\overline{v_j}
\le \lambda_{\text{max}}(\check{\mu},E)\,\|\mathbf{v}\|_2^2.
$$
In particular, if $\|{\bf v}\|_2=1$, we have
$$
\mathcal{D}^{+}(\check \mu_{\mathbf{v}})=
\limsup_{h\rightarrow\infty}\,\sup_{x\in \mathbb{R}^d}\,\frac{\check\mu_{\mathbf{v}}(x+Q_{h})}{h^d}
\le \limsup_{h\rightarrow\infty}\,\sup_{x\in \mathbb{R}^d}\,
\frac{\lambda_{\text{max}}(\check{\mu},x+Q_{h})}{h^d}
$$
and
$$
\mathcal{D}^{-}(\check \mu_{\mathbf{v}})=
\liminf_{h\rightarrow\infty}\,\inf_{x\in \mathbb{R}^d}\,\frac{\check\mu_{\mathbf{v}}(x+Q_{h})}{h^d}
\ge \liminf_{h\rightarrow\infty}\,\inf_{x\in \mathbb{R}^d}\,
\frac{\lambda_{\text{min}}(\check{\mu},x+Q_{h})}{h^d}
$$
which show that
$$
\mathcal{D}^{+}_N(\check{\mu})\le
\limsup_{h\rightarrow\infty}\,\sup_{x\in \mathbb{R}^d}\,
\frac{\lambda_{\text{max}}(\check{\mu},x+Q_{h})}{h^d}
\quad \text{and} \quad
\mathcal{D}^{-}_N(\check{\mu})\ge
\liminf_{h\rightarrow\infty}\,\inf_{x\in \mathbb{R}^d}\,
\frac{\lambda_{\text{min}}(\check{\mu},x+Q_{h})}{h^d}.
$$
To prove the reverse inequality
\begin{equation}\label{upper-ineq}
\limsup_{h\rightarrow\infty}\,\sup_{x\in \mathbb{R}^d}\,
\frac{\lambda_{\text{max}}(\check{\mu},x+Q_{h})}{h^d}\le \mathcal{D}^{+}_N(\check{\mu}),
\end{equation}
we can assume that $\mathcal{D}^{+}_N(\check{\mu})<\infty$.
 Consider sequences $\{h_n\}$ and $\{x_n\}$, with $h_n>0$,
$x_n\in \mathbb{R}^d$ and $h_n\to \infty$ as $n\to \infty$, such that
$$
\lim_{n\to \infty}\,
\frac{\lambda_{\text{max}}(\check{\mu},x_n+Q_{h_n})}{h_n^d}=
\limsup_{h\rightarrow\infty}\,\sup_{x\in \mathbb{R}^d}\,
\frac{\lambda_{\text{max}}(\check{\mu},x+Q_{h})}{h^d}.
$$
Let $ \mathbf{v}_n\in  \mathbb{C}^N$ be an eigenvector of norm $1$ associated with the largest
eigenvalue of the matrix $\check{\mu}(x_n+Q_{h_n})$.
We have then
$$
\lambda_{\text{max}}(\check{\mu},x_n+Q_{h_n})=\check\mu_{\mathbf{v}_n}(x_n+Q_{h_n}).
$$
Letting $ \mathbf{e}_i$, $i=1,\dots,N$, denote the vectors in the standard basis of
$\mathbb{C}^N$, it follows that $\check \mu_{\mathbf{e}_i}=\mu_{ii}$, $i=1,\dots,N$,
and, in particular, $\mathcal{D}^+(\mu_{ii})<\infty$ since we assume that
 $\mathcal{D}^{+}_N(\check{\mu})<\infty$. We can thus find constants $K,h_0>0$ such that
$$
\frac{\mu_{ii}(x+Q_{h})}{h^d}\le K,\quad 1\le i\le N,\,\,x\in \mathbb{R}^d,\,\,h\ge h_0.
$$
Using another well-known property of positive-definite matrices, we have also
that
$$
\frac{\left|\mu_{ij}(x+Q_{h})\right|}{h^d}\le
\left(\frac{\mu_{ii}(x+Q_{h})}{h^d}\right)^{1/2}\,
\left(\frac{\mu_{jj}(x+Q_{h})}{h^d}\right)^{1/2}\le K,\quad 1\le i,j\le N,\,\,x\in \mathbb{R}^d,
$$
for $h\ge h_0$. The entries of the matrices $G_n:=\check{\mu}(x_n+Q_{h_n})/h_n^d$ are thus uniformly
bounded, and we can assume, by compactness, after passing to a subsequence if necessary that
$$
 G_n\to G\quad \text{and}\quad \mathbf{v}_n \to \mathbf{v},\quad n\to \infty,
$$
where $G$ is a positive-definite $N\times N$ matrix and $\mathbf{v}$ a unit vector in $\mathbb{C}^N$.
We have then,
\begin{align*}
\mathcal{D}^{+}_N(\check{\mu})\ge &\lim_{n\to \infty}\,\check{\mu}_{\mathbf{v}}(x_n+Q_{h_n})/h_n^d=
\lim_{n\to \infty}\,\langle G_n\mathbf{v},\mathbf{v}\rangle=
\lim_{n\to \infty}\,\langle G_n\mathbf{v_n},\mathbf{v_n}\rangle\\
&=\lim_{n\to \infty}\,\check{\mu}_{\mathbf{v}_n}(x_n+Q_{h_n})/h_n^d
=\lim_{n\to \infty}\,\lambda_{\text{max}}(\check{\mu},x_n+Q_{h_n})/h_n^d\\
&=\limsup_{h\rightarrow\infty}\,\sup_{x\in \mathbb{R}^d}\,
\frac{\lambda_{\text{max}}(\check{\mu},x+Q_{h})}{h^d},
\end{align*}
which proves the inequality (\ref{upper-ineq}).
It remains to prove the inequality
\begin{equation}\label{lower-ineq}
\liminf_{h\rightarrow\infty}\,\inf_{x\in \mathbb{R}^d}\,
\frac{\lambda_{\text{min}}(\check{\mu},x+Q_{h})}{h^d}\ge \mathcal{D}^{-}_N(\check{\mu})
\end{equation}
under the additional assumption that $\mathcal{D}^{+}_N(\check{\mu})<\infty$.
Consider sequences $\{k_n\}$ and $\{y_n\}$, with $k_n>0$,
$y_n\in \mathbb{R}^d$ and $k_n\to \infty$ as $n\to \infty$, such that
$$
\lim_{n\to \infty}\,
\frac{\lambda_{\text{min}}(\check{\mu},y_n+Q_{k_n})}{k_n^d}=
\liminf_{h\rightarrow\infty}\,\inf_{x\in \mathbb{R}^d}\,
\frac{\lambda_{\text{min}}(\check{\mu},x+Q_{h})}{h^d}.
$$
Let $ \mathbf{u}_n\in  \mathbb{C}^N$ be an eigenvector of norm $1$ associated with the smallest
eigenvalue of the matrix $\check{\mu}(y_n+Q_{k_n})$.
Since  $\mathcal{D}^{+}_N(\check{\mu})<\infty$, the entries of the matrix
$H_n:=\check{\mu}(y_n+Q_{k_n})/k_n^d$ are uniformly
bounded and, similarly, as above we can assume that
$$
 H_n\to H\quad \text{and}\quad \mathbf{u}_n \to \mathbf{u},\quad n\to \infty,
$$
where $H$ is a positive-definite $N\times N$ matrix and $\mathbf{u}$ a unit vector in $\mathbb{C}^N$.
We have then,
\begin{align*}
\mathcal{D}^{-}_N(\check{\mu})\le &\lim_{n\to \infty}\,\check{\mu}_{\mathbf{u}}(y_n+Q_{k_n})/k_n^d=
\lim_{n\to \infty}\,\langle H_n\mathbf{u},\mathbf{u}\rangle=
\lim_{n\to \infty}\,\langle H_n\mathbf{u_n},\mathbf{u_n}\rangle\\
&=\lim_{n\to \infty}\,\check{\mu}_{\mathbf{u}_n}(y_n+Q_{k_n})/k_n^d
=\lim_{n\to \infty}\,\lambda_{\text{min}}(\check{\mu},y_n+Q_{k_n})/k_n^d\\
&=\liminf_{h\rightarrow\infty}\,\inf_{x\in \mathbb{R}^d}\,
\frac{\lambda_{\text{min}}(\check{\mu},x+Q_{h})}{h^d},
\end{align*}
proving our claim.
\end{proof}

\section{Fourier Frames on the union of small cubes}

 In this section, we will be exclusively interested in elements $\check \mu$
of $\mathcal{MP}_N( \mathbb{R}^d)$ constructed  starting
from a positive, locally finite Borel measure $\mu$ on  $\mathbb{R}^d$ and $N$ points
$x_1,\dots, x_N\in \mathbb{R}^d$. We  define $\check \mu=(\mu_{ij})$ by the formula
\begin{equation}\label{matrix-def}
d\mu_{ij}(\xi)=e^{-2\pi i (x_i-x_j)\cdot \xi}\,d\mu(\xi),\quad 1\le i,j\le N.
\end{equation}

If $E$ is a bounded subset of $\mathbb{R}^d$, the associated  matrix $(\mu_{ij}(E))$
has thus entries
$$
\mu_{ij}(E)=\int_E\,e^{-2\pi i (x_i-x_j)\cdot \xi}\,d\mu(\xi),
\quad i,j=1,\dots N.
$$
Furthermore, if $\mathbf{v}$ a  vector in $\mathbb{C}^N$, we have
\begin{equation}\label{eq3.2}
d\check \mu_{\mathbf{v}}=\sum_{i,j=1}^N\,d\mu_{i,j}\,v_i\,\overline{v_j}=
\big|\sum_{j=1}^N\,v_j\,e^{-2\pi i x_j\cdot \xi}\big|^2 d\mu(\xi),
\end{equation}
showing that $\check \mu$ is a positive matrix-valued measure.
\begin{example}
{\rm
If $N=2$, we  can use Lemma \ref{altdef} to obtain an explicit formula for the densities 
$\mathcal{D}^{-}_2(\check{\mu})$ and $\mathcal{D}^{+}_2(\check{\mu})$,
defined at the end of the previous section, which are
associated with a measure $\mu$ and two distinct points $x_1,x_2\in \mathbb{R}^d$
using (\ref{matrix-def}).
For any bounded Borel set $E\subset \mathbb{R}^d$, we have
\begin{align*} \displaystyle
\check{\mu}(E)&=
\begin{bmatrix}
\int_E\,1\,d\mu(\xi) &\int_E\,e^{-2\pi i (x_1-x_2)\cdot \xi}\,d\mu(\xi)\\
\, & \,\\
\int_E\,e^{-2\pi i (x_2-x_1)\cdot \xi}\,d\mu(\xi)&
\int_E\,1\,d\mu(\xi) 
\end{bmatrix}\\
\end{align*} 
and the  eigenvalues of $\check{\mu}(E)$ are given by
$$\lambda_{\text{max}}(\check\mu,E)=
\int_E\,1\,d\mu(\xi) +\left|\int_E\,e^{-2\pi i (x_1-x_2)\cdot \xi}\,d\mu(\xi)\right|
$$
and
$$
\lambda_{\text{min}}(\check\mu,E)
=\int_E\,1\,d\mu(\xi) -\left|\int_E\,e^{-2\pi i (x_1-x_2)\cdot \xi}\,d\mu(\xi)\right|.
$$
The  densities are then computed as
$$
\mathcal{D}^{-}_2(\check{\mu})
=\liminf_{R\to \infty}\,\inf_{t\in \mathbb{R}^d}\,\frac{1}{R^d}\,
\left\{\int_{t+Q_R}\,1\,d\mu(\xi) -\left|\int_{t+Q_R}\,e^{-2\pi i (x_1-x_2)\cdot \xi}\,d\mu(\xi)\right|\right\}.
$$
and
$$
\mathcal{D}^{+}_2(\check{\mu})
=\limsup_{R\to \infty}\,\sup_{t\in \mathbb{R}^d}\,\frac{1}{R^d}\,
\left\{\int_{t+Q_R}\,1\,d\mu(\xi) +\left|\int_{t+Q_R}\,e^{-2\pi i (x_1-x_2)\cdot \xi}\,d\mu(\xi)\right|\right\}.
$$
}
\end{example}

We need to prove a few technical lemmas before getting to the main result of this section,
\begin{lemma}\label{sum} Let $\psi \in \mathcal{S}(\mathbb{R}^d)$. Then, there exists $C>0$ such that
$$
\delta^d\,\sum_{k\in \mathbb{Z}^d} \,\sup_{\gamma \in Q_\delta }\,|\psi(\xi-k\delta-\gamma)|\le C,
\quad \xi \in\mathbb{R}^d,\,\,0<\delta\le 1.
$$
\end{lemma}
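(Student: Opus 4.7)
The plan is to recognize $\delta^d\sum_{k}\sup_{\gamma\in Q_\delta}|\psi(\xi-k\delta-\gamma)|$ as essentially an upper Riemann sum for an integrable envelope built from $\psi$, and then bound the envelope's integral independently of $\xi$ and $\delta$.

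First, I would reinterpret the sum geometrically. For each $k\in\mathbb{Z}^d$, set $R_k:=\xi-k\delta+Q_\delta$. Since $Q_\delta$ is a fundamental domain for $\delta\mathbb{Z}^d$ and is symmetric under $\gamma\mapsto -\gamma$, the cubes $\{R_k\}_{k\in\mathbb{Z}^d}$ tile $\mathbb{R}^d$ (up to a measure-zero boundary), and the substitution $y=\xi-k\delta-\gamma$ identifies the inner supremum as $\sup_{y\in R_k}|\psi(y)|$. Since $\psi$ is continuous and $R_k$ is compact, this supremum is attained at some $y_k^\ast\in R_k$, and $|y-y_k^\ast|_\infty\le\delta\le 1$ for every $y\in R_k$. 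Consequently,
$$
\delta^d\sup_{y\in R_k}|\psi(y)| = \int_{R_k}|\psi(y_k^\ast)|\,dy \le \int_{R_k}\Phi(y)\,dy,
$$
where $\Phi(y):=\sup_{\|z\|_\infty\le 1}|\psi(y-z)|$ is a fixed function on $\mathbb{R}^d$ that does not depend on $\xi$ or $\delta$. Summing over $k$ and using the tiling property yields
$$
\delta^d\sum_{k\in\mathbb{Z}^d}\sup_{\gamma\in Q_\delta}|\psi(\xi-k\delta-\gamma)| \le \int_{\mathbb{R}^d}\Phi(y)\,dy.
$$

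The final step is to verify $\Phi\in L^1(\mathbb{R}^d)$, which uses only the Schwartz decay of $\psi$. For any integer $M>d$ there exists $C_M>0$ with $|\psi(x)|\le C_M(1+|x|)^{-M}$, and for $\|z\|_\infty\le 1$ one has $|z|\le\sqrt{d}$, hence $|y-z|\ge |y|-\sqrt{d}$. Therefore $\Phi$ is bounded by $\|\psi\|_\infty$ on $\{|y|\le 2\sqrt{d}\}$ and decays like $(1+|y|)^{-M}$ outside this set, giving $\int_{\mathbb{R}^d}\Phi(y)\,dy<\infty$ as soon as $M>d$. Taking $C$ to be this integral closes the argument.

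I do not anticipate any serious obstacle: the argument is essentially a comparison of a Riemann-type sum to its integral, and the only bookkeeping points are the symmetry and tiling properties of $Q_\delta$ (which are immediate) and the integrability of the enlarged maximal function $\Phi$ at infinity (which is handled by Schwartz decay, with the enlargement by the unit cube independent of $\delta$ thanks to the assumption $\delta\le 1$).
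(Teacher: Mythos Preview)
Your argument is correct. It is, however, a genuinely different route from the paper's own proof. The paper first reduces to the one-dimensional case by bounding $|\psi(\gamma)|\le C_1\prod_{i=1}^d g(\gamma_i)$ with $g(t)=(1+t^2)^{-1}$, and then estimates the one-dimensional sum $\delta\sum_{k\in\mathbb{Z}}\sup_{|\gamma|\le\delta/2}g(\xi-k\delta-\gamma)$ directly: on the fundamental interval $\xi\in[-\delta/2,\delta/2]$ it uses $\inf_{|\gamma|\le\delta/2}|\xi-\delta k-\gamma|\ge\delta(|k|-1)$ to compare with $\sum_n \delta/(1+\delta^2 n^2)\le\int_0^\infty(1+x^2)^{-1}\,dx$, and then invokes $\delta$-periodicity in $\xi$ and the product structure to pass to $\mathbb{R}^d$. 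Your approach is more conceptual and economical: by recognizing the expression as an upper Riemann sum for the fixed integrable envelope $\Phi(y)=\sup_{\|z\|_\infty\le 1}|\psi(y-z)|$, you bypass the dimension reduction, the explicit choice of $g$, and the periodicity argument entirely. The paper's method, on the other hand, yields a slightly more explicit constant and makes the dependence on the Schwartz seminorms more transparent through the product decomposition.
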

\begin{proof}
 Let us define
$$
g(\gamma)=\frac{1}{1+\gamma^2},\quad \gamma\in \mathbb{R}.
$$
and suppose that $0<\delta\le 1$.
If $\xi\in [-\delta/2, \delta/2]$ and $k\in \mathbb{Z}\setminus \{0\}$,
$$
\inf_{|\gamma|\le \delta/2}\,|\xi-\delta k-\gamma|
=\min\{|\xi-k\delta-\delta/2|,|\xi-k\delta+\delta/2|\} \ge \delta\,(|k|-1).
$$
Hence,
\begin{align*}
\delta \,\sum_{k\in \mathbb{Z}} \,\sup_{|\gamma|\le \delta/2  }\,g(\xi-k\delta-\gamma)
& \le \left(\delta+\sum_{k\in\mathbb{Z}\setminus \{0\}}\,
\frac{\delta}{1+\delta^2\,(|k|-1)^2}\right)\\
&=3\,\delta+ 2\,\sum_{n=1}^\infty\,\frac{\delta}{1+\delta^2\,n^2}\le
3+2\,\int_0^\infty\,\frac{1}{1+x^2}\,dx=c<\infty.
\end{align*}
Since the left-hand side of the previous expression is $\delta$-periodic, if follows that the inequality
holds for all $\xi\in \mathbb{R}$.
If $\psi \in \mathcal{S}(\mathbb{R}^d)$,  we have the estimate
$$
|\psi(\gamma)|\le C_1\,\prod_{i=1}^d\,g(\gamma_i),\quad \gamma=(\gamma_1,\dots,\gamma_d)\in \mathbb{R}^d.
$$
Therefore, for any $\xi\in \mathbb{R}^d$, we obtain
\begin{align*}
\delta^d\,\sum_{k\in \mathbb{Z}^d} \,\sup_{\gamma \in Q_\delta }\,
|\psi(\xi-\delta k-\gamma)|
& \le \sum_{k\in \mathbb{Z}^d} \,
C_1\, \prod_{i=1}^d\,\delta\, \sup_{|\gamma_i|\le \delta /2 }\,g(\xi_i-\delta k_i-\gamma_i)\\
&=C_1\, \prod_{i=1}^d\,\delta\, \sum_{k_i\in \mathbb{Z}}\,\sup_{|\gamma_i|\le \delta/2 }\,
g(\xi_i-\delta k_i-\gamma_i)\le C_1\,c^d=C<\infty.
\end{align*}
 \end{proof}
\begin{lemma}\label{lem2.2}
Let $\mu$ be a locally finite, positive  Borel measure on $\mathbb{R}^d$ and let $\delta>0$.
Suppose that $F_1,\dots,F_N \in \mathbb{R}^d$
are compactly supported. Then, for any $\epsilon>0$, we have the inequalities
$$
G(\delta,\epsilon)-I(\delta,\epsilon)\leq
\left(\int_{\mathbb{R}^d}\,\epsilon^d\, \bigg|\sum_{i=1}^{N}\,
\hat{F_i}(\epsilon\lambda)\,e^{-2\pi i x_i\cdot\lambda}\bigg|^2\,d\mu(\lambda)\right)^{1/2}
\leq G(\delta,\epsilon)+ I(\delta,\epsilon)
$$
if $I(\delta,\epsilon)<\infty$,
where
$$
G(\delta,\epsilon)=\left(\sum_{k\in\mathbb{Z}^d}
\int_{\delta k/\epsilon +Q_{\delta/\epsilon}}\,\epsilon^d\,\bigg|\sum_{i=1}^{N}\hat{F_i}(\delta k)
e^{-2\pi i x_i\cdot\lambda}\bigg|^2\,d\mu(\lambda)\right)^{1/2}
$$
and
$$ I(\delta,\epsilon)=
\left(\sum_{k\in \mathbb{Z}^d}\,\int_{\delta k/\epsilon+Q_{\delta/\epsilon}}\,\epsilon^d
\,\bigg|\sum_{i=1}^{N}\,\left(\hat{F_i}(\epsilon\lambda)-\hat{F_i}(\delta k)\right)\,
e^{-2\pi i x_i\cdot\lambda}\bigg|^2\,d\mu(\lambda)\right)^{1/2}.
$$
\end{lemma}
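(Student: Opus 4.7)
The strategy is to recognize each of the three quantities in the inequality as an $L^2(\mu)$-norm of an explicit function and then apply the triangle inequality.

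First I would partition $\mathbb{R}^d$ by the translates $E_k := \delta k/\epsilon + Q_{\delta/\epsilon}$, $k\in\mathbb{Z}^d$, interpreting $Q_{\delta/\epsilon}$ as a half-open cube so that the $E_k$ are pairwise disjoint and cover $\mathbb{R}^d$. Define
$$
A(\lambda) := \epsilon^{d/2}\sum_{i=1}^{N} \hat{F}_i(\epsilon\lambda)\,e^{-2\pi i x_i\cdot\lambda},\qquad \tilde{A}(\lambda) := \epsilon^{d/2}\sum_{i=1}^{N} \hat{F}_i(\delta k)\,e^{-2\pi i x_i\cdot\lambda} \text{ for } \lambda\in E_k.
$$
The middle expression in the lemma is exactly $\|A\|_{L^2(\mu)}$, and decomposing the integrals over $\mathbb{R}^d$ as sums of integrals over the (disjoint) $E_k$ yields the identifications
$$
\|\tilde{A}\|_{L^2(\mu)} = G(\delta,\epsilon),\qquad \|A - \tilde{A}\|_{L^2(\mu)} = I(\delta,\epsilon).
$$

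The hypothesis $I(\delta,\epsilon)<\infty$ says $A-\tilde{A}\in L^2(\mu)$. If $G(\delta,\epsilon)=+\infty$ both inequalities are trivial, so assume $G(\delta,\epsilon)<\infty$; then $\tilde A$, and hence $A=\tilde A+(A-\tilde A)$, both lie in $L^2(\mu)$. The ordinary triangle inequality in $L^2(\mu)$,
$$
\bigl|\,\|A\|_{L^2(\mu)}-\|\tilde A\|_{L^2(\mu)}\,\bigr|\le \|A-\tilde A\|_{L^2(\mu)},
$$
is then precisely
$$
G(\delta,\epsilon) - I(\delta,\epsilon) \le \|A\|_{L^2(\mu)} \le G(\delta,\epsilon) + I(\delta,\epsilon),
$$
as claimed.

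There is no substantive obstacle here; the content is purely the bookkeeping that repackages the problem as a triangle inequality. The only minor subtlety is the half-open convention for $Q_{\delta/\epsilon}$ in the tiling, which makes $\tilde A$ pointwise unambiguous and the sum-of-integrals expressions in the definitions of $G$ and $I$ literally equal to the $L^2(\mu)$-norms of $\tilde A$ and $A-\tilde A$ (if one prefers closed cubes, the boundary is a countable union of hyperplanes and the overcount can be absorbed). The lemma is evidently a setup for what follows: $G(\delta,\epsilon)$ has the form of a Riemann-type sum against $\mu$, and the error $I(\delta,\epsilon)$ will be controlled for small $\delta$ via the smoothness of the $\hat F_i$ together with Lemma \ref{sum}.
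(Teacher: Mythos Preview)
Your proof is correct and follows essentially the same idea as the paper's: both recognize the middle term as $\|A\|_{L^2(\mu)}$ with $A=\tilde A+(A-\tilde A)$ and invoke the triangle inequality. The paper phrases this as two successive applications of Minkowski's inequality (first in $L^2(\mu|_{E_k})$ for each $k$, then in $\ell^2(\mathbb{Z}^d)$), which is just a block-by-block unpacking of your single global triangle inequality in $L^2(\mu)$; your packaging is slightly cleaner but the content is identical.
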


\begin{proof}
If $k\in  \mathbb{Z}^d$, define
$$
S_k(\gamma)=\epsilon^{d/2}\,\sum_{i=1}^{N}\hat{F_i}(\delta k)
e^{-2\pi i x_i\cdot\lambda}\quad\text{and}\quad
R_k(\gamma)=\epsilon^{d/2}\,\sum_{i=1}^{N}\,\left(\hat{F_i}(\epsilon\lambda)-\hat{F_i}(\delta k)\right)\,
e^{-2\pi i x_i\cdot\lambda}
$$
Applying Minkowski's inequality twice, we have
\begin{align*}
&\left(\int_{\mathbb{R}^d}\,\epsilon^d\, \bigg|\sum_{i=1}^{N}\,
\hat{F_i}(\epsilon\lambda)\,e^{-2\pi i x_i\cdot\lambda}\bigg|^2\,d\mu(\lambda)\right)^{1/2}
=\left(\sum_{k\in \mathbb{Z}^d}\,\int_{\delta k/\epsilon+Q_{\delta/\epsilon}}
\,|S_k(\gamma)+R_k(\gamma)|^2\,d\mu(\lambda)\right)^{1/2}\\
&\le \left(\sum_{k\in \mathbb{Z}^d}\,\left[\left(\int_{\delta k/\epsilon+Q_{\delta/\epsilon}}
\,|S_k(\gamma)|^2\,d\mu(\lambda)\right)^{1/2}+\left(\int_{\delta k/\epsilon+Q_{\delta/\epsilon}}
\,|R_k(\gamma)|^2\,d\mu(\lambda)\right)^{1/2}\right]^{2}\right)^{1/2}\\
&\le \left(\sum_{k\in \mathbb{Z}^d}\,\int_{\delta k/\epsilon+Q_{\delta/\epsilon}}
\,|S_k(\gamma)|^2\,d\mu(\lambda)\right)^{1/2}
+\left(\sum_{k\in \mathbb{Z}^d}\,\int_{\delta k/\epsilon+Q_{\delta/\epsilon}}
\,|R_k(\gamma)|^2\,d\mu(\lambda)\right)^{1/2}\\
&= G(\delta,\epsilon)+I(\delta,\epsilon).
\end{align*}

Similarly, reversing the role of $\delta k$ and $\lambda$ in the above computation, we obtain
the inequality
$$
G(\delta,\epsilon)
\leq \left(\int_{\mathbb{R}^d}\,\epsilon^d \left|\sum_{i=1}^{N}\,
\hat{F_i}(\epsilon\lambda)\,e^{-2\pi i x_i\cdot\lambda}\right|^2\,d\mu(\lambda)\right)^{1/2}
+ I(\delta,\epsilon).
$$
This completes the proof of this lemma.
\end{proof}

\begin{lemma}\label{lem2.3} Suppose that $\mu$ is a locally finite, positive  Borel measure on
 $\mathbb{R}^d$ and consider $N$ functions $F_1,\dots,F_N \in L^2(Q_1)$.
If $0<\delta\le 1$ and $\gamma\in\mathbb{R}^d$,   let
$\mathbf{v}_{\gamma} =  (\hat{F_1}(\gamma),\cdots,\hat{F_N}(\gamma))\in \mathbb{C}^N$.
Then, there exists a constant $C>0$ such that
$$
 |I(\delta,\epsilon)|^2
\leq C\,\delta\,
\int_{\mathbb{R}^d}\, \sup_{\zeta\in\mathbb{R}^d}\,
\frac{\check\mu_{\mathbf{v}_{\gamma}}(\zeta+Q_{\delta/\epsilon})}{(\delta/\epsilon)^d}\,d\gamma,
\quad \epsilon>0,
$$
where
$$
I(\delta,\epsilon)=
\left(\sum_{k\in \mathbb{Z}^d}\,\int_{\delta k/\epsilon+Q_{\delta/\epsilon}}\,\epsilon^d
\,\bigg|\sum_{i=1}^{N}\,\left(\hat{F_i}(\epsilon\lambda)-\hat{F_i}(\delta k)\right)\,
e^{-2\pi i x_i\cdot\lambda}\bigg|^2\,d\mu(\lambda)\right)^{1/2}.
$$
\end{lemma}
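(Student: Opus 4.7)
My plan is to exploit the band-limited structure of $\hat F_i$. Since each $F_i\in L^2(Q_1)$ is supported in $Q_1$, fix a Schwartz function $\phi\in\mathcal{S}(\mathbb{R}^d)$ whose Fourier transform satisfies $\hat\phi\equiv 1$ on $Q_1$ (for example, take $\phi$ to be the inverse Fourier transform of a smooth even cutoff of $Q_1$). Then $F_i\hat\phi=F_i$ pointwise, and Fourier-transforming gives the reproducing identity
$$\hat F_i(\xi)=\int_{\mathbb{R}^d}\hat F_i(\gamma)\,\phi(\xi-\gamma)\,d\gamma,\qquad \xi\in\mathbb{R}^d.$$
Substituting this into the sum inside $I(\delta,\epsilon)^2$ and interchanging the sum over $i$ with the integral over $\gamma$ rewrites the integrand as $\int\alpha_k(\gamma,\lambda)H_\gamma(\lambda)\,d\gamma$, where $\alpha_k(\gamma,\lambda):=\phi(\epsilon\lambda-\gamma)-\phi(\delta k-\gamma)$ and $H_\gamma(\lambda):=\sum_{i=1}^N\hat F_i(\gamma)\,e^{-2\pi ix_i\cdot\lambda}$. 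The crucial observation is that formula (\ref{eq3.2}) applied to $\mathbf{v}_\gamma$ produces $|H_\gamma(\lambda)|^2\,d\mu(\lambda)=d\check\mu_{\mathbf{v}_\gamma}(\lambda)$, which is the bridge that converts $\mu$-integrals of $|H_\gamma|^2$ into box-averages of $\check\mu_{\mathbf{v}_\gamma}$.

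I would then apply the weighted Cauchy--Schwarz inequality
$$\Bigl|\int\alpha_kH_\gamma\,d\gamma\Bigr|^2\le\Bigl(\int|\alpha_k|\,d\gamma\Bigr)\Bigl(\int|\alpha_k|\,|H_\gamma|^2\,d\gamma\Bigr).$$
When $\lambda\in\delta k/\epsilon+Q_{\delta/\epsilon}$ we have $|\epsilon\lambda-\delta k|\le\delta\sqrt{d}/2$, so the mean value theorem yields $|\alpha_k(\gamma,\lambda)|\le C_1\delta\,\Psi_k(\gamma)$, where $\Psi_k(\gamma):=\sup_{\eta\in Q_\delta}|\nabla\phi(\delta k+\eta-\gamma)|$ depends only on $\gamma$. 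Because $\Psi_k$ is a translate of an integrable Schwartz-type majorant, $\int\Psi_k\,d\gamma$ is uniformly bounded and hence $\int|\alpha_k(\gamma,\lambda)|\,d\gamma\le C_2\delta$. Inserting the same bound in the second factor produces the crucial pointwise estimate
$$|G_k(\lambda)|^2\le C_3\delta^2\int\Psi_k(\gamma)\,|H_\gamma(\lambda)|^2\,d\gamma,$$
in which the right-hand side no longer depends on $\lambda$ through $\alpha_k$.

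The last step is to integrate this estimate against $d\mu$ over $\delta k/\epsilon+Q_{\delta/\epsilon}$, apply Fubini, and use $|H_\gamma|^2\,d\mu=d\check\mu_{\mathbf{v}_\gamma}$ to obtain
$$\int_{\delta k/\epsilon+Q_{\delta/\epsilon}}|G_k(\lambda)|^2\,d\mu(\lambda)\le C_3\delta^2\int d\gamma\,\Psi_k(\gamma)\,\check\mu_{\mathbf{v}_\gamma}(\delta k/\epsilon+Q_{\delta/\epsilon}).$$
Multiplying by $\epsilon^d$, summing over $k$, majorizing $\check\mu_{\mathbf{v}_\gamma}(\delta k/\epsilon+Q_{\delta/\epsilon})\le\sup_\zeta\check\mu_{\mathbf{v}_\gamma}(\zeta+Q_{\delta/\epsilon})$, and invoking Lemma~\ref{sum} (applied to a Schwartz majorant of $|\nabla\phi|$) to bound $\sum_k\Psi_k(\gamma)\le C_4\delta^{-d}$ uniformly in $\gamma$, the powers collapse through $\epsilon^d\delta^{-d}=(\delta/\epsilon)^{-d}$. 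Since $\delta^2\le\delta$ for $\delta\le 1$, the stated inequality with the single factor $\delta$ follows.

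The step requiring the most care is ensuring that the estimate on $|\alpha_k(\gamma,\lambda)|$ reduces to a purely $\gamma$-dependent bound $C\delta\Psi_k(\gamma)$ before integrating in $\lambda$, so that Fubini cleanly recasts $\int|H_\gamma|^2\,d\mu$ as the box-average $\check\mu_{\mathbf{v}_\gamma}(\cdot)$. Once this is in place, Lemma~\ref{sum} is exactly the mechanism designed to control the resulting sum $\sum_k\Psi_k(\gamma)$, and the bookkeeping of the factors $\delta$, $\delta^{-d}$, and $\epsilon^d$ then falls into place automatically.
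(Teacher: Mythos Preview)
Your argument is correct and follows essentially the same route as the paper: a reproducing kernel $\phi$ (the paper calls it $\psi=\hat\beta$), weighted Cauchy--Schwarz, the mean value theorem, Fubini to convert $|H_\gamma|^2\,d\mu$ into $d\check\mu_{\mathbf v_\gamma}$, and Lemma~\ref{sum} to control the lattice sum of Schwartz maximal functions. The only cosmetic differences are that you apply the mean value estimate to \emph{both} Cauchy--Schwarz factors (yielding $\delta^2$, then relaxing to $\delta$) whereas the paper bounds the first factor crudely by $2\|\psi\|_{L^1}$ and gains only a single $\delta$ from the second, and that you eliminate the $\lambda$-dependence of $\alpha_k$ \emph{before} integrating against $\mu$ whereas the paper first applies Fubini and only then invokes the mean value theorem; neither change affects the substance.
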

\begin{proof}
Let $\beta\in C^{\infty}_{0}(\mathbb{R}^d)$ with $\beta =1$ on a
neighborhood of $Q_1$ and let $\psi = \hat{\beta}$.
Then $\psi\in{\mathcal S}(\mathbb{R}^d)$ and $F\beta=F$ for any $F\in L^2(Q_1)$
which implies that $\hat{F}\ast\psi=\hat{F}$.  Using this last identity together with
 the Cauchy-Schwarz inequality, we have
 \begin{align*}
 &\left|\sum_{i=1}^{N}\,\left(\hat{F_i}(\epsilon\lambda)-\hat{F_i}(\delta k)\right)\,
e^{-2\pi i x_i\cdot\lambda}\right|^2 \\
&= \left|\int_{\mathbb{R}^d}\,\left(\psi(\epsilon\lambda-\gamma)-\psi(\delta k-\gamma)\right)
\,\bigg(\sum_{i=1}^{N}\,\hat{F_i}(\gamma)\,e^{-2\pi i x_i\cdot\lambda}\bigg)\,d\gamma\right|^2\\
 &\leq\bigg(\int_{\mathbb{R}^d}\,\left|\psi(\epsilon\lambda-\gamma)-\psi(\delta k-\gamma)\right|\,
\bigg|\sum_{i=1}^{N}\,\hat{F_i}(\gamma)\,e^{-2\pi i  x_i\cdot\lambda}\bigg|^2\,d\gamma
 \bigg) \times\\
&\qquad\left( \int_{\mathbb{R}^d}\,\left|\psi(\epsilon\lambda-\gamma)-\psi(\delta k-\gamma)\right|
\,d\gamma\right)\\
 &\leq 2\,\|\psi\|_{L^1}\,\int_{\mathbb{R}^d}\,
\left|\psi(\epsilon\lambda-\gamma)-\psi(\delta k-\gamma)\right|\,
\bigg|\sum_{i=1}^{N}\,\hat{F_i}(\gamma)\,e^{-2\pi i x_i\cdot\lambda}\bigg|^2\,d\gamma.\\
 \end{align*}
 Given $\epsilon>0$, we
consider the element $\check{\mu}^\epsilon$ of $\mathcal{MP}_N( \mathbb{R}^d)$
associated via formula (\ref{matrix-def}) to the measure $\mu^\epsilon$
 defined by
$$
\int_{\mathbb{R}^d}\,\phi(\xi)\,d\mu^{\epsilon}(\xi)=
\int_{\mathbb{R}^d}\,\epsilon^d\,\phi(\epsilon \xi)\,d\mu(\xi),\quad \phi\in C_c(\mathbb{R}^d).
$$
In particular, we have, for any $\mathbf{v}\in \mathbb{C}^N$ that
$$
\check\mu_{\mathbf{v}}^{\epsilon}(\xi+Q_{\delta})=\epsilon^d\,
\sum_{i,j=1}^{N}\,v_i\,\overline{v_j}\,\mu_{i,j}(\xi/\epsilon+Q_{\delta/\epsilon})
 = \delta^{d}\,\frac{\check\mu_{\mathbf{v}}(\xi/\epsilon+Q_{\delta/\epsilon})}{(\delta/\epsilon)^d},
\quad \xi\in \mathbb{R}^d,\,\,\epsilon,\delta>0,
$$
which yields the inequalities
\begin{equation}\label{2.2+}
\delta^d\,\inf_{\xi'\in\mathbb{R}^d}\,
\frac{\check\mu_{{\bf v}}(\xi'+Q_{\delta/\epsilon})}{(\delta/\epsilon)^d}
\leq\check\mu_{{\bf v}}^{\epsilon}(\xi+Q_{\delta})
\leq \delta^d\,\sup_{\xi'\in\mathbb{R}^d}\,
\frac{\check\mu_{{\bf v}}(\xi'+Q_{\delta/\epsilon})}{(\delta/\epsilon)^d}.
\end{equation}
 Hence, using Fubini's theorem and letting $C_0=2\,\|\psi\|_{L^1}$, we have
 \begin{align*}
 &|I(\delta,\epsilon)|^2\leq
C_0\,\sum_{k\in \mathbb{Z}^d}\,
\int_{\delta k/\epsilon+Q_{\delta/\epsilon}}\,
\int_{\mathbb{R}^d}\,\epsilon^d\,\left|\psi(\epsilon\lambda-\gamma)-\psi(\delta k-\gamma)\right|
\,\bigg|\sum_{i=1}^{N}\,\hat{F_i}(\gamma)\,
e^{-2\pi i x_i\cdot\lambda}\bigg|^2\,d\gamma\, d\mu(\lambda)\\
 &=C_0\,\sum_{k\in \mathbb{Z}^d}\,\int_{\mathbb{R}^d}\,
\sum_{i,j=1}^{N}\,\int_{\delta k+Q_{\delta}}\,\left|\psi(\lambda-\gamma)-\psi(\delta k-\gamma)\right|
\,\hat{F_i}(\gamma)\,\overline{\hat{F_j}(\gamma)}\,d\mu^{\epsilon}_{i,j}(\lambda)\,d\gamma\\
 &=C_0\,\int_{\mathbb{R}^d}\,\sum_{k\in \mathbb{Z}^d}\,
\int_{\delta k+Q_{\delta}}\,\left|\psi(\lambda-\gamma)-\psi(\delta k-\gamma)\right|
\,d\check\mu_{\mathbf{v}_{\gamma}}^{\epsilon}(\lambda)\,d\gamma.
\end{align*}
Using the mean-value theorem, we have the estimate
 $$
 |\psi(\lambda-\gamma)-\psi(\delta k-\gamma)|\leq
\delta\sqrt{d}\sum_{i=1}^{d}\sup_{\xi'\in\delta k+Q_{\delta}}
\left|\frac{\partial\psi}{\partial\xi_i}(\xi'-\gamma)\right|,\quad \lambda\in\delta k+Q_{\delta}.
 $$
 Hence, by (\ref{2.2+}),
 $$
 |I(\delta,\epsilon)|^2\leq
\int_{\mathbb{R}^d}\, C_0\,\delta\,\sqrt{d}\,
\left(\sup_{\zeta\in\mathbb{R}^d}\,
\frac{\mu_{{\bf v}_{\gamma}}(\zeta+Q_{\delta/\epsilon})}{(\delta/\epsilon)^d}\right)\,
\sum_{i=1}^{d}\,\sum_{k\in \mathbb{Z}^d}\,
\sup_{\xi'\in\delta k+Q_{\delta}}\delta^d\,
\left|\frac{\partial\psi}{\partial\xi_i}(\xi'-\gamma)\right|\,d\gamma
 $$
 Applying Lemma \ref{sum} to each function
$\frac{\partial\psi}{\partial\xi_i}$ in $\mathcal{S}(\mathbb{R}^d)$,
we can thus  find a constant $C>0$ such that
 $$
 |I(\delta,\epsilon)|^2\leq C\,\delta\, \int_{\mathbb{R}^d}\,
\sup_{\zeta\in\mathbb{R}^d}\,
\frac{\mu_{{\bf v}_{\gamma}}(\zeta+Q_{\delta/\epsilon})}{(\delta/\epsilon)^d}\,d\gamma,
\quad \epsilon>0,
 $$
as claimed.
 \end{proof}

We now state the main result of this section.

\begin{theorem}\label{th1.1}
Let $x_1,\cdots, x_N \in\mathbb{R}^d$ be distinct and let $\mu$ be a locally finite, positive Borel measure
on $\mathbb{R}^d$. Define the associate positive matrix-valued measure $\check \mu$
using formula (\ref{matrix-def}).
Then the following are equivalent.
\begin{enumerate}[(a)]
\item There exist constants $A,B>0$ and $\epsilon>0$ such that the sets $x_j+Q_{\epsilon}$, $j=1,\dots,N$,
are disjoint and such that  the frame inequalities
\begin{equation}\label{eq2.2}
A\,\|f\|_2^2\leq \int_{\mathbb{R}^d}|\hat{f}(\lambda)|^2\,d\mu(\lambda)
\leq B\, \|f\|_2^2, \ f\in L^2\left(\Omega\right),
\end{equation}
are satisfied for $\Omega=\bigcup_{j=1}^{N}(x_j+Q_{\epsilon})$.
\item We have $0<\mathcal{D}^{-}_N(\check{\mu})\leq \mathcal{D}^{+}_N(\check{\mu}) <\infty$.
\end{enumerate}
Moreover, if (a) holds, we have the inequalities
$A\leq \mathcal{D}^{-}_N(\check{\mu})\leq \mathcal{D}^{+}_N(\check{\mu})\leq B$.
Conversely, if (b) holds, then, for any
$\rho>0$ such that $\mathcal{D}^{-}_N(\check{\mu})-\rho>0$, there exists
$\epsilon>0$ such that (a) holds with $A=\mathcal{D}^{-}_N(\check{\mu})-\rho$ and
$B=\mathcal{D}^{-}_N(\check{\mu})+\rho$.
\end{theorem}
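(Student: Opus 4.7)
The proof splits into the two implications of the stated equivalence, each reducing matters to tools already developed in the excerpt. For (a) $\Rightarrow$ (b), I would reduce to the scalar Theorem \ref{measure-frame1} by testing the frame inequality against functions supported on the disjoint cubes $x_j+Q_\epsilon$. Given a unit vector $\mathbf{v}=(v_1,\dots,v_N)\in\mathbb{C}^N$ and any $G\in L^2(Q_\epsilon)$, set $f(x)=\sum_j v_j\,G(x-x_j)\in L^2(\Omega)$. Disjointness of supports yields $\|f\|_2^2=\|G\|_2^2$, while $\hat f(\lambda)=\hat G(\lambda)\sum_j v_j\,e^{-2\pi i x_j\cdot\lambda}$ and identity (\ref{eq3.2}) give
\[
\int_{\mathbb{R}^d}|\hat f(\lambda)|^2\,d\mu(\lambda)=\int_{\mathbb{R}^d}|\hat G(\lambda)|^2\,d\check\mu_{\mathbf{v}}(\lambda).
\]
Hence $\check\mu_{\mathbf{v}}$ satisfies the scalar frame inequality on $L^2(Q_\epsilon)$ with bounds $A,B$, and Theorem \ref{measure-frame1} yields $A\leq\mathcal{D}^-(\check\mu_{\mathbf{v}})\leq\mathcal{D}^+(\check\mu_{\mathbf{v}})\leq B$. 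Taking the infimum and supremum over unit $\mathbf{v}$ produces the chain $A\leq\mathcal{D}^-_N(\check\mu)\leq\mathcal{D}^+_N(\check\mu)\leq B$ and, in particular, (b).

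For (b) $\Rightarrow$ (a), I would fix $\rho>0$ with $\mathcal{D}^-_N(\check\mu)-\rho>0$, pick $\epsilon_0>0$ making the cubes $x_j+Q_{\epsilon_0}$ disjoint, and for any $\epsilon\leq\epsilon_0$ decompose each $f\in L^2(\Omega)$ uniquely as $f(x)=\epsilon^{-d/2}\sum_j F_j((x-x_j)/\epsilon)$ with $F_j\in L^2(Q_1)$ and $\|f\|_2^2=\sum_j\|F_j\|_2^2=:T^2$. A direct substitution identifies $\int|\hat f|^2\,d\mu$ with the quantity sandwiched by Lemma \ref{lem2.2} between $(G(\delta,\epsilon)-I(\delta,\epsilon))^2$ and $(G(\delta,\epsilon)+I(\delta,\epsilon))^2$. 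To control $G(\delta,\epsilon)$ I would rewrite each summand as
\[
\epsilon^d\,\check\mu_{\mathbf{v}_{\delta k}}(\delta k/\epsilon+Q_{\delta/\epsilon})=\delta^d\,\|\mathbf{v}_{\delta k}\|^2\cdot\frac{\mathbf{v}_{\delta k}^*\,\check\mu(\delta k/\epsilon+Q_{\delta/\epsilon})\,\mathbf{v}_{\delta k}}{\|\mathbf{v}_{\delta k}\|^2\,(\delta/\epsilon)^d},
\]
whose normalized quadratic-form factor lies between the extreme eigenvalue densities of $\check\mu$; by Lemma \ref{altdef}, these lie within $\rho/2$ of $\mathcal{D}^\pm_N(\check\mu)$ uniformly in $k$ once $\delta/\epsilon$ exceeds a threshold depending only on $\mu$ and $\rho$. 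The total weight obeys the exact identity $\sum_k\delta^d\|\mathbf{v}_{\delta k}\|^2=T^2$ for every $\delta\leq 1$, which follows from Parseval's theorem applied to each $F_j\in L^2(Q_1)\subseteq L^2(Q_{1/\delta})$ in the orthonormal system $\{\delta^{d/2}e^{2\pi i\delta k\cdot x}\}_k$. Finally, since $\mathcal{D}^+_N(\check\mu)<\infty$ forces $\mu$ (which equals each $\mu_{ii}$) to be translation-bounded, Lemma \ref{lem2.3} delivers $I(\delta,\epsilon)^2\leq C\,\delta\,T^2$ uniformly for $\delta/\epsilon\geq 1$, with $C$ depending only on $\mu,N,d$.

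It remains to coordinate the parameters in the correct order: first choose $\delta\in(0,1]$ so small that $\sqrt{C\delta}$ contributes less than $\rho/4$ to the relevant square roots, and then choose $\epsilon\leq\epsilon_0$ so small that $\delta/\epsilon$ forces the normalized eigenvalue densities to be within $\rho/2$ of $\mathcal{D}^\pm_N(\check\mu)$ via Lemma \ref{altdef}. Expanding $(G\pm I)^2=G^2\pm 2GI+I^2$ then places $\int|\hat f|^2\,d\mu$ in the interval $[(\mathcal{D}^-_N(\check\mu)-\rho)T^2,\,(\mathcal{D}^+_N(\check\mu)+\rho)T^2]$, which is the desired frame inequality with $A=\mathcal{D}^-_N(\check\mu)-\rho$ and $B=\mathcal{D}^+_N(\check\mu)+\rho$. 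The main obstacle is this delicate triple-parameter orchestration: one must ensure that the Riemann-type main term $G(\delta,\epsilon)^2$ captures the matrix Beurling densities faithfully (which forces $\epsilon$ small \emph{relative to} $\delta$) while simultaneously keeping the error $I(\delta,\epsilon)$ negligible (which requires $\delta$ small independently); the strict positivity $\mathcal{D}^-_N(\check\mu)>0$ is essential in order that the lower frame bound $A$ remain positive.
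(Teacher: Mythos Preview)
Your proposal is correct and follows essentially the same route as the paper. For (a)$\Rightarrow$(b) you package the argument through the scalar case (Theorem~\ref{measure-frame1}), whereas the paper replicates the modulation/convolution step inline and applies Theorem~\ref{Ga-th1} directly; for (b)$\Rightarrow$(a) your use of Lemmas~\ref{lem2.2} and~\ref{lem2.3}, the Shannon/Parseval identity for $\sum_k\delta^d\|\mathbf{v}_{\delta k}\|^2$, and the ``first $\delta$, then $\epsilon$'' parameter choice match the paper's proof almost line for line, with your explicit appeal to Lemma~\ref{altdef} for the uniform-in-$\mathbf{v}$ eigenvalue bounds being slightly more transparent than the paper's reference to ``the definition of $\mathcal{D}^\pm_N(\check\mu)$.''
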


\begin{proof}
If $\xi \in\mathbb{R}^d$, define the modulation operator $M_\xi$ acting on $L^2(\mathbb{R}^d)$, by
$$
M_{\xi} f(x) = e^{2\pi i\xi\cdot x}\,f(x),\quad f\in L^2(\mathbb{R}^d).
$$
If (a) holds,
let $\mathbf{v}=(v_1,\dots,v_N)\in \mathbb{C}^N$ with $\|\mathbf{v} \|_2=1$, let
$\xi \in \mathbb{R}^d$ and, if $f\in  L^2(Q_{\epsilon})$,
consider the function
$$
h:=\sum_{i=1}^{N}\,v_i\,\delta_{x_i}\ast (M_{\xi} \overline{f})\in L^2(\Omega).
$$
  Then,
$$
\int_{\Omega}\,|h(x)|^2\,dx =
\left(\sum_{i=1}^{N}\,|v_i|^2\right)\,\left(\int_{Q_{\epsilon}}\,|f(x)|^2\,dx\right) =
\int_{Q_{\epsilon}}\,|f(x)|^2\,dx.
$$
On the other hand, we have also
\begin{align*}
\int_{\mathbb{R}^d}\,|\hat{h}(\lambda)|^2\,d\mu(\lambda)
=& \int_{\mathbb{R}^d}\,\left|\sum_{i=1}^{N}\,v_i\,\hat{f}(\xi-\lambda)\,
e^{-2\pi i  x_i\cdot\lambda}\right|^2\,d\mu(\lambda)
= \sum_{i,j=1}^{N}\,\int_{\mathbb{R}^d}\,v_i\,\overline{v_j}\,
|\hat{f}(\xi-\lambda)|^2\,d\mu_{i,j}(\lambda)\\
=&\int_{\mathbb{R}^d}\,|\hat{f}(\xi-\lambda)|^2\,d\check\mu_{{\bf v}}(\lambda)
= \left(\check\mu_{{\bf v}}\ast|\hat{f}|^2\right)(\xi).\\
\end{align*}
Using the frame inequalities (\ref{eq2.2}), we have thus
$$
A\,\int_{Q_{\epsilon}}\,|f(x)|^2\,dx
\leq \check\mu_{{\bf v}}\ast|\hat{f}|^2 \leq B\,\int_{Q_{\epsilon}}|f(x)|^2\,dx.
$$
As $\check\mu_{{\bf v}}$ is a positive measure, Theorem \ref{Ga-th1} shows that
$0<A\leq \mathcal{D}^{-}(\check\mu_{\mathbf{v}})\leq
\mathcal{D}^{+}(\check\mu_{\mathbf{v}})\leq B<\infty$ for any
vector $\mathbf{v}$ of
norm 1. Taking infimum and supremum, respectively,  over the unit ball of  $\mathbb{C}^N$,
we obtain (b).

We now prove that (b) implies (a).
Let $\Omega =
\bigcup_{i=1}^{N}\left(x_i+Q_{\epsilon}\right)$
with $\epsilon>0$ chosen small enough so the sets in the previous union
are pairwise disjoint.
A function $f\in L^2(\Omega)$  can then be uniquely written as
$$
f = \sum_{i=1}^{N}\,\delta_{x_i}\ast f_i,\quad \ f_i\in L^2(Q_{\epsilon}).
$$
For each of the functions $f_i$ above, let $F_i\in L^2(Q_1)$ be defined by
$f_i(x)= {\epsilon}^{-d/2}\,F_i(x/\epsilon)$. Clearly $\|F_i\|_2 = \|f_i\|_2$ and
 \begin{equation}\label{eq2.3}
 \int_{\Omega}\,|f(x)|^2\,dx=
\sum_{i=1}^{N}\, \int_{\mathbb{R}^d}\,|\hat F_i(\lambda)|^2\,d\lambda\quad
\text{with}\quad
\hat{f}(\xi)= \epsilon^{d/2}\,\sum_{i=1}^{N}\,
\hat{F_i}(\epsilon \xi)\,e^{-2\pi i x_i\cdot\xi}.
 \end{equation}
 Hence, proving (\ref{eq2.2}) is equivalent to showing that
for any $F_1,\dots,F_N\in L^2(Q_1)$ and for $\epsilon>0$ sufficiently small,
we have the inequalities
\begin{equation}\label{eq2.5}
A\,\sum_{i=1}^{N}\,\int_{\mathbb{R}^d}\,|\hat{F_i}(\lambda)|^2\,d\lambda
\leq \int_{\mathbb{R}^d}\,\epsilon^d\, \left|\sum_{i=1}^{N}\,
\hat{F_i}(\epsilon\lambda)\,e^{-2\pi i  x_i\cdot\lambda}\right|^2\,d\mu(\lambda)
\leq B\, \sum_{i=1}^{N}\int_{\mathbb{R}^d}\,|\hat{F_i}(\lambda)|^2\,d\lambda.
\end{equation}

 Given any number $\rho>0$ with  $\mathcal{D}^{-}_N(\check{\mu})-\rho>0$,
we choose $\rho'$ with $0<\rho'\le 1$ and small enough so that
$$
(\sqrt{ \mathcal{D}^{+}_N(\check{\mu})+\rho'}+\sqrt{\rho'})^2\leq
(\mathcal{D}^{+}_N(\check{\mu})+\rho)
\quad\text{and}\quad
 (\sqrt{\mathcal{D}^{-}_N(\check{\mu})-\rho'}-\sqrt{\rho'})^2\geq
\mathcal{D}^{-}_N(\check{\mu})-\rho.
$$
With this particular  chosen $\rho'>0$,
we can find a number $\delta$ with $0<\delta\le 1$ and  small enough so that
$$
4\,C\,\delta\, ( \mathcal{D}^{+}_N(\check{\mu}))+ \mathcal{D}^{-}_N(\check{\mu})+1)<\rho',
$$
where $C$ is the constant obtained in Lemma \ref{lem2.3}.
With that value of $\delta$ fixed, we use the assumption (b)
and the definition of $\mathcal{D}^{\pm}_N(\check{\mu})$ to obtain the existence of a number
 $\epsilon>0$ small enough so that
$$
\sup_{\zeta\in\mathbb{R}^d}\frac{\check\mu_{{\bf v}}(\zeta+Q_{\delta/\epsilon})}{(\delta/\epsilon)^d}
\leq (\mathcal{D}^{+}_N(\check{\mu}))+\rho')\|{\bf v}\|^2.
$$
and
$$
\inf_{\zeta\in\mathbb{R}^d}\frac{\check\mu_{{\bf v}}(\zeta+Q_{\delta/\epsilon})}{(\delta/\epsilon)^d}
\geq (\mathcal{D}^{-}_N(\check{\mu})-\rho')\|{\bf v}\|^2
$$
for any $\|{\bf v}\|=1$. If $\gamma\in\mathbb{R}^d$,   let us denote by
$\mathbf{v}_{\gamma}$ the vector $(\hat{F_1}(\gamma),\cdots,\hat{F_N}(\gamma))\in \mathbb{C}^N$,
Applying Lemma \ref{lem2.3} to the functions $F_1,\dots,F_N\in L^2(Q_1)$
defined above and letting
$$
I(\delta,\epsilon)=
\left(\sum_{k\in \mathbb{Z}^d}\,\int_{\delta k/\epsilon+Q_{\delta/\epsilon}}\,\epsilon^d
\,\bigg|\sum_{i=1}^{N}\,\left(\hat{F_i}(\epsilon\lambda)-\hat{F_i}(\delta k)\right)\,
e^{-2\pi i x_i\cdot\lambda}\bigg|^2\,d\mu(\lambda)\right)^{1/2},
$$
we obtain that
\begin{align*}
 |I(\delta,\epsilon)|^2&\leq C\,\delta \,
\int_{\mathbb{R}^d}\, \sup_{\zeta\in\mathbb{R}^d}
\frac{\check\mu_{\mathbf{v}_{\gamma}}(\zeta+Q_{\delta/\epsilon})}{(\delta/\epsilon)^d}\,d\gamma
\leq C\,\delta\,\left(\mathcal{D}^{+}_N(\check{\mu})+\rho'\right)\,
\int_{\mathbb{R}^d}\, \|\mathbf{v}_{\gamma}\|^2\,d\gamma\\
\leq& \rho'\,\int_{\mathbb{R}^d}\, \sum_{i=1}^{N}\,|\hat{F_i}(\gamma)|^2\,d\gamma = \rho'\|f\|_2^2.
\end{align*}

On the other hand, letting
$$
|G(\delta,\epsilon)|^2=\sum_{k\in \mathbb{Z}^d}\,
\int_{\delta k/\epsilon+
Q_{\delta/\epsilon}}\,\epsilon^d\,\left|\sum_{i=1}^{N}\,\hat{F_i}(\delta k)
e^{-2\pi i x_i\cdot\lambda}\right|^2\,d\mu(\lambda)
$$
we have
\begin{align*}
|G(\delta,\epsilon)|^2
=&\sum_{k\in \mathbb{Z}^d}\,\sum_{i,j=1}^{N}\,\hat{F_i}(\delta k)\,
\overline{\hat{F_j}(\delta k)}\,\epsilon^d\,\mu_{i,j}(\delta k/\epsilon+Q_{\delta/\epsilon})
= \sum_{k\in \mathbb{Z}^d}\,\epsilon^d\,\check\mu_{\mathbf{v}_{\delta k}}(\delta/\epsilon
+Q_{\delta/\epsilon})\\
\leq& \delta^d\,\sum_{k\in\mathbb{Z}^d}\,\sup_{\zeta\in\mathbb{R}^d}\,
\frac{\check\mu_{\mathbf{v}_{\delta k}}(\zeta+Q_{\delta/\epsilon})}{(\delta/\epsilon)^d}
\leq\delta^d\,\sum_{k\in \mathbb{Z}^d}\,(\mathcal{D}^{+}_N(\check{\mu})+\rho')\,
\|\mathbf{v}_{\delta k}\|_2^2\\
=&(\mathcal{D}^{+}_N(\check{\mu})+\rho')\,\sum_{i=1}^{N}\,
\sum_{k\in\mathbb{Z}^d}\,\delta^d\,|\hat{F_i}(\delta k)|^2.
\end{align*}
Since each function $F_i$ is supported in $Q_1$ and $\delta\leq 1$,
the Shannon sampling theorem shows that
$$
\sum_{k\in \mathbb{Z}^d}\,\delta^d\,|\hat{F_i}(\delta k)|^2 = \|F_i\|_2^2,\quad i=1,\dots N.
$$
Hence, we obtain the inequality
$$
|G(\delta,\epsilon)|^2\leq (\mathcal{D}^{+}_N(\check{\mu}))+\rho')\,\|f\|_2^2.
$$
A similar computation shows also that
$$
|G(\delta,\epsilon)|^2\geq (\mathcal{D}^{+}_N(\check{\mu}))-\rho')\,\|f\|_2^2.
$$
Using the estimates for $I(\delta,\epsilon)$ and $G(\delta,\epsilon)$
just obtained, we deduce from Lemma \ref{lem2.2}, that

\begin{align*}
&\int_{\mathbb{R}^d}\,\epsilon^d\, \bigg|\sum_{i=1}^{N}\,
\hat{F_i}(\epsilon\lambda)\,e^{-2\pi i x_i\cdot\lambda}\bigg|^2\,d\mu(\lambda)
\leq \left(G(\delta,\epsilon)+I(\delta,\epsilon)\right)^2\\
&\le
\left(\sqrt{\mathcal{D}^{+}_N(\check{\mu})+\rho'}+\sqrt{\rho'}\right)^2\,\|f\|_2^2
\leq (\mathcal{D}^{+}_N(\check{\mu})+\rho)\,\|f\|_2^2
\end{align*}
and
\begin{align*}
&\int_{\mathbb{R}^d}\,\epsilon^d\, \bigg|\sum_{i=1}^{N}\,
\hat{F_i}(\epsilon\lambda)\,e^{-2\pi i x_i\cdot\lambda}\bigg|^2\,d\mu(\lambda)
\geq \left(G(\delta,\epsilon)-I(\delta,\epsilon)\right)^2\\
&\ge
\left(\sqrt{\mathcal{D}^{-}_N(\check{\mu})-\rho'}-\sqrt{\rho'}\right)^2\,\|f\|_2^2
\geq (\mathcal{D}^{-}_N(\check{\mu})-\rho)\,\|f\|_2^2.
\end{align*}
This completes the  proof.

\end{proof}
Of course, ignoring the lower-bound estimates in the proof just given,
we can also prove the Bessel version of the previous theorem.
\begin{theorem}\label{th1.2}
Let $x_1,\cdots, x_N \in\mathbb{R}^d$ be distinct and let $\mu$ be a locally finite, positive Borel measure
on $\mathbb{R}^d$. Define the associate positive matrix-valued measure $\check \mu$
using formula (\ref{matrix-def}).
Then the following are equivalent.
\begin{enumerate}[(a)]
\item There exist constants $A>0$ and $\epsilon>0$ such that the sets $x_j+Q_{\epsilon}$, $j=1,\dots,N$,
are disjoint and such that  the Bessel inequality
\begin{equation}\label{bessel2}
\int_{\mathbb{R}^d}|\hat{f}(\lambda)|^2\,d\mu(\lambda)
\leq B\, \|f\|_2^2, \ f\in L^2\left(\Omega\right)
\end{equation}
holds for $\Omega=\bigcup_{j=1}^{N}(x_j+Q_{\epsilon})$.
\item We have $ \mathcal{D}^{+}_N(\check{\mu}) <\infty$.
\end{enumerate}
Moreover, if (a) holds, we have the inequality
$\mathcal{D}^{+}_N(\check{\mu})\leq B$.
Conversely, if (b) holds, then, for any
$\rho>0$,  there exists $\epsilon>0$ such that (a) holds with
$B=\mathcal{D}^{-}_N(\check{\mu})+\rho$.
\end{theorem}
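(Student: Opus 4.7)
The plan is to run the argument from Theorem \ref{th1.1} essentially verbatim, simply dropping every line that concerns the lower frame bound. Since the lemmas that power that proof (Lemmas \ref{sum}, \ref{lem2.2}, \ref{lem2.3}) only involve upper estimates on the matrix-valued measure $\check\mu$, nothing in the machinery has to change; only the application of Theorem \ref{Ga-th1} gets replaced by its part (a).

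For the direction (a) $\Longrightarrow$ (b), I would mimic the first part of the proof of Theorem \ref{th1.1}. Given a unit vector $\mathbf{v}\in\mathbb{C}^N$, $\xi\in\mathbb{R}^d$ and $f\in L^2(Q_\epsilon)$, form
$$
h=\sum_{i=1}^{N}\,v_i\,\delta_{x_i}\ast (M_{\xi}\overline{f})\in L^2(\Omega),
$$
which has $\|h\|_2^2=\|f\|_2^2$ since the translated cubes are disjoint, and for which a direct computation using (\ref{matrix-def}) and (\ref{eq3.2}) yields $\int|\hat h|^2\,d\mu=(\check\mu_{\mathbf{v}}\ast|\hat f|^2)(\xi)$. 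The Bessel hypothesis (\ref{bessel2}) then reads $(\check\mu_{\mathbf{v}}\ast|\hat f|^2)(\xi)\le B\,\|f\|_2^2$ pointwise in $\xi$, and since $\int|\hat f|^2\,d\lambda=\|f\|_2^2$, Theorem \ref{Ga-th1}(a) gives $\mathcal{D}^{+}(\check\mu_{\mathbf{v}})\le B$. Taking the supremum over unit vectors $\mathbf{v}$ yields $\mathcal{D}^{+}_N(\check\mu)\le B<\infty$.

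For the direction (b) $\Longrightarrow$ (a), I would follow the second half of the proof of Theorem \ref{th1.1}, but only the upper estimates are needed. Given $\rho>0$, choose $\rho'\in(0,1]$ so that $(\sqrt{\mathcal{D}^{+}_N(\check\mu)+\rho'}+\sqrt{\rho'})^2\le \mathcal{D}^{+}_N(\check\mu)+\rho$, then $\delta\in(0,1]$ with $4C\delta(\mathcal{D}^{+}_N(\check\mu)+1)<\rho'$ where $C$ is the constant from Lemma \ref{lem2.3}, and finally $\epsilon>0$ small enough so that
$$
\sup_{\zeta\in\mathbb{R}^d}\,\frac{\check\mu_{\mathbf{v}}(\zeta+Q_{\delta/\epsilon})}{(\delta/\epsilon)^d}\le (\mathcal{D}^{+}_N(\check\mu)+\rho')\,\|\mathbf{v}\|_2^2,\qquad \mathbf{v}\in\mathbb{C}^N.
$$
After decomposing $f\in L^2(\Omega)$ as $f=\sum_i\delta_{x_i}\ast f_i$ with $f_i(x)=\epsilon^{-d/2}F_i(x/\epsilon)$ and $F_i\in L^2(Q_1)$, Lemma \ref{lem2.3} together with the chosen uniform bound gives $|I(\delta,\epsilon)|^2\le C\delta(\mathcal{D}^{+}_N(\check\mu)+\rho')\|f\|_2^2\le \rho'\|f\|_2^2$, while the Shannon sampling identity $\sum_{k\in\mathbb{Z}^d}\delta^d|\hat F_i(\delta k)|^2=\|F_i\|_2^2$ (applicable since $F_i$ is supported in $Q_1$ and $\delta\le1$) yields $|G(\delta,\epsilon)|^2\le(\mathcal{D}^{+}_N(\check\mu)+\rho')\|f\|_2^2$. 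Lemma \ref{lem2.2} then gives the upper half of (\ref{eq2.5}) with $B=\mathcal{D}^{+}_N(\check\mu)+\rho$, which by (\ref{eq2.3}) is exactly (\ref{bessel2}).

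There is no real obstacle here: the main issue is purely organizational, namely isolating the portions of the proof of Theorem \ref{th1.1} that do not rely on $\mathcal{D}^{-}_N(\check\mu)>0$, and noting that boundedness of $\mathcal{D}^{+}_N(\check\mu)$ alone suffices both to invoke the compactness argument inside Lemma \ref{altdef} (which we do not reprove here) and to produce the uniform upper bounds on $\check\mu_{\mathbf{v}}(\zeta+Q_{\delta/\epsilon})/(\delta/\epsilon)^d$ used above.
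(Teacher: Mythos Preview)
Your proposal is correct and is precisely the approach the paper takes: the paper states explicitly that Theorem~\ref{th1.2} follows by ``ignoring the lower-bound estimates in the proof just given,'' and your outline accurately isolates the upper-bound half of each step in the proof of Theorem~\ref{th1.1}. Note also that the ``$B=\mathcal{D}^{-}_N(\check\mu)+\rho$'' in the statement is an evident typo for $\mathcal{D}^{+}_N(\check\mu)+\rho$, which you correctly reproduce.
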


\medskip

The following gives an interpretation of the
lower and upper density of the positive matrix-valued measure $\check \mu$
defined (\ref{matrix-def}) as limiting lower  and upper
frame bounds, respectively.

\begin{corollary}\label{cor2.2}
Under the assumptions of Theorem \ref{th1.1}, suppose that $\mathcal{D}^{+}_N(\check{\mu})<\infty$.
Define $A_{\epsilon}$ and $B_{\epsilon}$ to be the optimal bounds for
the inequalities
\begin{equation}
A_{\epsilon}\,\|f\|_2^2\leq \int_{\mathbb{R}^d}|\hat{f}(\lambda)|^2\,d\mu(\lambda)
\leq B_{\epsilon}\, \|f\|_2^2, \ f\in L^2\left(\Omega\right),
\end{equation}
where $\Omega=\bigcup_{j=1}^{N}(x_j+Q_{\epsilon})$.
Then $\lim_{\epsilon\rightarrow 0} A_{\epsilon}=
\mathcal{D}^{-}_N(\check{\mu})$
and
$\lim_{\epsilon\rightarrow 0} B_{\epsilon}=\mathcal{D}^{+}_N(\check{\mu})$.
\end{corollary}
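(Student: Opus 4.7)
The plan is to view this corollary as essentially a repackaging of Theorems \ref{th1.1} and \ref{th1.2}, combined with a simple monotonicity observation. First I would record the following monotonicity property: if $\epsilon_1<\epsilon_2$, then $\Omega_{\epsilon_1}\subset\Omega_{\epsilon_2}$, and extending a function $f\in L^2(\Omega_{\epsilon_1})$ by zero produces a function in $L^2(\Omega_{\epsilon_2})$ with the same $L^2$-norm and the same Fourier transform. Consequently any frame (or Bessel) inequality valid on $L^2(\Omega_{\epsilon_2})$ restricts to one on $L^2(\Omega_{\epsilon_1})$ with the same constants, so $A_{\epsilon_1}\ge A_{\epsilon_2}$ and $B_{\epsilon_1}\le B_{\epsilon_2}$. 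Hence $A_\epsilon$ is non-increasing and $B_\epsilon$ is non-decreasing in $\epsilon$, and both one-sided limits as $\epsilon\to 0^+$ exist in $[0,\infty]$.

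For the upper bounds, I would first use the hypothesis $\mathcal{D}^{+}_N(\check\mu)<\infty$ together with the direction (b)$\Rightarrow$(a) of Theorem \ref{th1.2}: given $\rho>0$, pick $\epsilon_0>0$ so that the Bessel inequality holds on $L^2(\Omega_{\epsilon_0})$ with constant $\mathcal{D}^{+}_N(\check\mu)+\rho$; by monotonicity, $B_\epsilon\le \mathcal{D}^{+}_N(\check\mu)+\rho$ for every $\epsilon\le\epsilon_0$. In the reverse direction, whenever $B_\epsilon<\infty$, the direction (a)$\Rightarrow$(b) of Theorem \ref{th1.2} yields $\mathcal{D}^{+}_N(\check\mu)\le B_\epsilon$. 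Letting $\rho\to 0$ then gives $\lim_{\epsilon\to 0}B_\epsilon=\mathcal{D}^{+}_N(\check\mu)$.

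For the lower bound, I would split into two cases. If $\mathcal{D}^{-}_N(\check\mu)>0$, then the direction (b)$\Rightarrow$(a) of Theorem \ref{th1.1} produces, for each sufficiently small $\rho>0$, an $\epsilon_0>0$ for which the full frame inequality holds on $L^2(\Omega_{\epsilon_0})$ with lower bound $\mathcal{D}^{-}_N(\check\mu)-\rho$; monotonicity then gives $A_\epsilon\ge \mathcal{D}^{-}_N(\check\mu)-\rho$ for $\epsilon\le\epsilon_0$, while the implication (a)$\Rightarrow$(b) of the same theorem gives the matching upper estimate $A_\epsilon\le \mathcal{D}^{-}_N(\check\mu)$ whenever $A_\epsilon>0$. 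If instead $\mathcal{D}^{-}_N(\check\mu)=0$, then for every $\epsilon$ small enough that $B_\epsilon<\infty$ (which exists by the previous paragraph), the forward implication of Theorem \ref{th1.1} forces $A_\epsilon=0$, since otherwise $\mathcal{D}^{-}_N(\check\mu)$ would be strictly positive. In both cases $\lim_{\epsilon\to 0}A_\epsilon=\mathcal{D}^{-}_N(\check\mu)$.

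I do not expect any serious obstacle: the substantive content lives in Theorems \ref{th1.1} and \ref{th1.2}, and the corollary only requires a clean deployment of their quantitative ``moreover'' clauses together with the monotonicity of the optimal constants in $\epsilon$. The only mild subtlety worth double-checking is the degenerate case $\mathcal{D}^{-}_N(\check\mu)=0$, handled above by observing that a positive lower bound together with a finite upper bound is impossible when the lower matrix density vanishes.
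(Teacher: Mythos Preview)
Your proof is correct and follows essentially the same approach as the paper: both arguments sandwich $A_\epsilon$ and $B_\epsilon$ using the quantitative ``moreover'' clauses of Theorems \ref{th1.1} and \ref{th1.2}, and treat the degenerate case $\mathcal{D}^{-}_N(\check\mu)=0$ separately. Your explicit monotonicity observation (that $A_\epsilon$ is non-increasing and $B_\epsilon$ non-decreasing) is a small addition absent from the paper's proof, but it is not essential---the two-sided estimates for small $\epsilon$ already force the limits.
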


\begin{proof} If $\mathcal{D}^{-}_N(\check{\mu})>0$, the inequalities obtained in
Theorem \ref{th1.1}, show,  for any $\rho>0$, that
$$
\mathcal{D}^{-}_N(\check{\mu})-\rho \le A_{\epsilon}\le \mathcal{D}^{-}_N(\check{\mu})
\quad \text{and}\quad \mathcal{D}^{+}_N(\check{\mu}) \le B_{\epsilon}\le \mathcal{D}^{+}_N(\check{\mu})+\rho,
$$
if $\epsilon>0$ is small enough, proving our statement in that case.
If $\mathcal{D}^{-}_N(\check{\mu})=0$ and $\rho>0$, we have $A_{\epsilon}=0$
and the inequalities $\mathcal{D}^{+}_N(\check{\mu}) \le B_{\epsilon}\le \mathcal{D}^{+}_N(\check{\mu})+\rho$
 if $\epsilon>0$ is small enough, from which our claim follows immediately.
\end{proof}

\begin{remark} We note that using (\ref{eq3.2}), given a positive Borel measure $\mu$ on $\mathbb{R}^d$, the quantity
$\mathcal{D}^{+}_N(\check{\mu})$, where  $\check \mu$ is
defined in (\ref{matrix-def}) can also be defined as the smallest constant $B\ge 0$ such hat
$$
\limsup_{h\to \infty}\,\sup_{t\in \mathbb{R}^d}\,
\frac{1}{h^d}\,\int_{t+Q_h}\,
\big|\sum_{i=1}^m\,a_i\,
e^{2\pi i x_i\cdot \xi}\big|^2\,d\mu(\xi)\le B\,\sum_{i=1}^m\,|a_i|^2,
$$
for any  $a_1,\dots,a_m\in \mathbb{C}$.
Similarly, if $\mathcal{D}^{+}_N(\check{\mu})<\infty$, the quantity $\mathcal{D}^{-}_N(\check{\mu})$
can also be defined as the largest constant $A\ge 0$ such hat
$$
\liminf_{h\to \infty}\,\inf_{t\in \mathbb{R}^d}\,
\frac{1}{h^d}\,\int_{t+Q_h}\,
\big|\sum_{i=1}^m\,a_i\,
e^{2\pi i x_i\cdot \xi}\big|^2\,d\mu(\xi)\ge A\,\sum_{i=1}^m\,|a_i|^2,
$$
for any  $a_1,\dots,a_m\in \mathbb{C}$. We will use these alternate definitions in the next section.
\end{remark}
\medskip

\section{Uniform limiting frame bounds for subgroups of $\mathbb{R}$.}

This section will be devoted to the characterization of uniform ${\mathcal F}$-measures and ${\mathcal B}$-measures
 over subgroups of ${\mathbb R}$. These are the elements of the sets ${\mathcal F}(G,A,B)$ and ${\mathcal B}(G,B)$, respectively,
 defined in Definition \ref{def3} of the introduction.  Recall that our goal will be to characterize the measures having the property of being a common $\mathcal{F}$-measure (resp.~$\mathcal{B}$-measure) for $L^2(\Omega)$, where
$\Omega$ is any set of the form $\Omega=\bigcup_{j=1}^{N}(x_j+Q_{\epsilon})$,
for $\epsilon>0$ small enough and dependent on the points $x_i$,
where the points $x_j$ belong to a given subgroup $G$ of $\mathbb{R}$,
but with the limiting frame bounds (resp. Bessel bounds), as defined in the end of the
last section, being independent of the points $x_i$, $i=1,\dots, N$.

In the problems stated, it is clear that a measure will satisfy one of the
properties mentioned with respect to a given group $G$ if and only if
it will do so for any one of its finitely generated subgroup, as we need only to check
 those properties on each finite subset of $G$. Therefore, our main focus will be to deal
with finitely generated subgroup of $\mathbb{R}$, i.e.~those of the form
$$
G=\left\{\sum_{i=1}^s\,m_i\,a_i,\,\,m_i\in \mathbb{Z}\right\}.
$$
where $a_1,\dots,a_s$ can be assumed to be linearly independent over $\mathbb{Q}$.

The notion of weak-$*$ convergence of measures defined below
and the property of  weak-$*$ compactness
  will play an important role
in the proofs of our main results. Let $C_c({\mathbb R}^s)$ denote the space of complex-valued continuous functions
with compact support defined on ${\mathbb R}^s$.

\begin{definition} Let $\sigma_i$, $i\ge 1$, and  $\sigma$ be locally finite, positive Borel measures
on $\mathbb{R}^s$. We say that $\sigma_i$ converges to  $\sigma$ in the weak-$*$ topology
as $i\to \infty$
if for any $\varphi\in C_c(\mathbb{R}^s)$, we have
$$
\lim_{i\to \infty}\,\int_{\mathbb{R}^s}\,\varphi(\xi)\,d\sigma_i(\xi)
=\int_{\mathbb{R}^s}\,\varphi(\xi)\,d\sigma(\xi).
$$
\end{definition}

We have the following criterion  for weak-$*$ compactness:
if $\{\sigma_i\}_{i\ge 1}$ is a sequence of
 locally finite, positive Borel measures
on $\mathbb{R}^s$ which is locally uniformly bounded, i.e.~for any compact $K\subset\mathbb{R}^s$,
there exists a constant $C(K)$ such that
$$
\sup_{i\ge 1}\,\sigma_i(K)\le C(K),
$$
then the sequence $\{\sigma_i\}_{i\ge 1}$ admits a subsequence which is convergent
in the weak-$*$ topology.
We need some preliminary lemmas.
\begin{lemma}\label{tau} Let $\{\tau_j\}_{j\ge 1}$ be a sequence of positive Borel measures on $\mathbb{R}^s$
such that $\tau_j\to \tau$ in the weak-$*$ topology.
If $\tau$ is absolutely continuous with respect to the Lebesgue measure with Radon-Nikodym
derivative $G\in L^\infty(\mathbb{R}^s)$, then we have
$$
\lim_{j\to \infty}\,\int_{F}\,H(\xi)\,d\tau_j(\xi)=\int_{F}\,H(\xi)\,d\tau(\xi)
$$
for any continuous function $H\ge 0$ on $\mathbb{R}^s$
where $F=I_1\times\dots\times I_s$ and $I_s\subset \mathbb{R}$, $j=1,\dots, s$, are bounded intervals.
\end{lemma}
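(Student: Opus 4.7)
The plan is to run a standard sandwich approximation, squeezing $\chi_F$ between two continuous compactly supported functions whose difference vanishes outside a thin neighborhood of $\partial F$, and then exploit weak-$*$ convergence together with the crucial fact that $\tau$ assigns no mass to $\partial F$ (since $\partial F$ has Lebesgue measure zero and $\tau$ has a bounded density).

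First I would fix a compact set $K$ containing a neighborhood of $\overline{F}$, and for each $\epsilon>0$ construct $\varphi_\epsilon^\pm\in C_c(\mathbb{R}^s)$ with $\operatorname{supp}\varphi_\epsilon^\pm\subset K$, $0\le \varphi_\epsilon^-\le \chi_F\le \varphi_\epsilon^+\le 1$, and with $\varphi_\epsilon^+-\varphi_\epsilon^-$ supported in the $\epsilon$-neighborhood $N_\epsilon$ of $\partial F$. Such bump functions can be produced by mollifying $\chi_{F_\epsilon^-}$ and $\chi_{F_\epsilon^+}$ for a suitable inner/outer product-of-intervals approximation of $F$. Since $H\ge 0$ and both $\tau_j$ and $\tau$ are positive measures, we obtain the pointwise sandwich
$$
\int H\,\varphi_\epsilon^-\,d\tau_j\;\le\;\int_F H\,d\tau_j\;\le\;\int H\,\varphi_\epsilon^+\,d\tau_j.
$$

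Next, because $H\varphi_\epsilon^\pm\in C_c(\mathbb{R}^s)$, weak-$*$ convergence $\tau_j\to\tau$ yields
$$
\int H\,\varphi_\epsilon^-\,d\tau\;\le\;\liminf_{j\to\infty}\int_F H\,d\tau_j\;\le\;\limsup_{j\to\infty}\int_F H\,d\tau_j\;\le\;\int H\,\varphi_\epsilon^+\,d\tau.
$$
Using $d\tau=G\,d\lambda$ with $G\in L^\infty$, the gap between the two extremes is estimated by
$$
\int H\,(\varphi_\epsilon^+-\varphi_\epsilon^-)\,d\tau\;\le\;\|G\|_\infty\,\bigl(\sup_K H\bigr)\,|N_\epsilon|,
$$
which tends to $0$ as $\epsilon\to 0$, because $\partial F$ is a finite union of lower-dimensional faces of $I_1\times\cdots\times I_s$ and hence has Lebesgue measure zero. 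The same fact gives $\tau(\partial F)=0$, so $\int H\,\varphi_\epsilon^\pm\,d\tau\to\int_F H\,d\tau$ by dominated convergence. Combining these observations pins down both the liminf and the limsup at $\int_F H\,d\tau$, completing the argument.

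The only real difficulty is handling the boundary of $F$: without some control on how $\tau$ behaves near $\partial F$, weak-$*$ convergence is not strong enough to replace continuous test functions by the discontinuous $\chi_F H$. The hypothesis that $\tau$ is absolutely continuous with an $L^\infty$ density is exactly what turns $F$ into a $\tau$-continuity set and makes the sandwich collapse as $\epsilon\to 0$.
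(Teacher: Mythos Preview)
Your argument is correct and is essentially the same sandwich/Portmanteau argument as the paper's: the paper squeezes $H\chi_F$ between two functions $\phi_1,\phi_2\in C_c(\mathbb{R}^s)$ with $\int(\phi_2-\phi_1)\,d\xi\le\epsilon$, whereas you squeeze $\chi_F$ between $\varphi_\epsilon^\pm$ and then multiply by $H$, but in both cases the crux is that the bounded density $G$ converts small Lebesgue measure near $\partial F$ into small $\tau$-measure. One cosmetic point: when you write $|N_\epsilon|$ you mean $|N_\epsilon\cap K|$, since the full $\epsilon$-neighborhood of $\partial F$ is unbounded; this is implicit in your $\operatorname{supp}\varphi_\epsilon^\pm\subset K$, but worth making explicit.
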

\begin{proof} Let $B=\|G\|_{\infty}$. Given $\epsilon >0$, choose real-valued
functions $\phi_1, \phi_2\in C_c(\mathbb{R}^s)$ such that
$\phi_1(\xi)\le H(\xi) \,\chi_F(\xi)\le \phi_2(\xi)$, for $\xi\in \mathbb{R}$,
and with
$$
\int_{\mathbb{R}^s}\,\phi_2(\xi)-\phi_1(\xi)\,d\xi\le\epsilon.
$$
Then,
\begin{align*}
&\limsup_{j\to \infty}\,\int_{\mathbb{R}^s}\,\phi_2(\xi)-H(\xi)\,\chi_F(\xi)\,d\tau_j(\xi)
\le
\lim_{j\to \infty}\,\int_{\mathbb{R}^s}\,\phi_2(\xi)-\phi_1(\xi)\,d\tau_j(\xi)\\
&=\int_{\mathbb{R}^s}\,G(\xi)\,\left(\phi_2(\xi)-\phi_1(\xi)\right)\,d\xi
\le B\,\int_{\mathbb{R}^s}\,\left(\phi_2(\xi)-\phi_1(\xi)\right)\,d\xi\le B\,\epsilon
\end{align*}
and thus
$$
\liminf_{j\to \infty}\,\int_{F}\,H(\xi)\,d\tau_j(\xi)\ge
\int_{\mathbb{R}^s}\,G(\xi)\,\phi_2(\xi)\,d\xi-B\,\epsilon\ge
\int_{\mathbb{R}^s}\,G(\xi)\,H(\xi)\,\chi_F(\xi)\,d\xi-B\,\epsilon.
$$
Similarly,
\begin{align*}
\limsup_{j\to \infty}\,\int_{\mathbb{R}^s}\,H(\xi)\,\chi_F(\xi)-\phi_1(\xi)\,d\tau_j(\xi)&\le
\lim_{j\to \infty}\,\int_{\mathbb{R}^s}\,\phi_2(\xi)-\phi_1(\xi)\,d\tau_j(\xi)\le B\,\epsilon
\end{align*}
which shows  that
$$
\limsup_{j\to \infty}\,\int_{F}\,H(\xi)\,d\tau_j(\xi)\le
\int_{\mathbb{R}^s}\,G(\xi)\,\phi_1(\xi)\,d\xi+B\,\epsilon
\le \int_{\mathbb{R}^s}\,G(\xi)\,H(\xi)\,\chi_F(\xi)\,d\xi+B\,\epsilon.
$$
We have thus
\begin{align*}
\int_{F}\,H(\xi)\,G(\xi)\,d\xi- B\,\epsilon&\le
  \liminf_{j\to \infty}\,\int_{F}\,H(\xi)\,d\tau_j(\xi)
\le   \limsup_{j\to \infty}\,\int_{F}\,H(\xi)\,d\tau_j(\xi)\\
&\le \int_{F}\,H(\xi)\,G(\xi)\,d\xi+B\,\epsilon,
\end{align*}
and the result follows since $\epsilon>0$ is arbitrary and $d\tau=G(\xi)\,d\xi$.
\end{proof}

 If $\rho$ is a signed or complex measure on $\mathbb{R}$, we will
denote by $|\rho|$ its total variation.
\begin{lemma}\label{per} Let $a_1,\dots, a_s$ be $s$ positive real numbers and let
$\mu$ be a positive translation-bounded Borel measure on $\mathbb{R}$.
Consider the positive Borel measure $\nu_\mu$ on $\mathbb{R}^s$
defined by
\begin{equation}\label{nudef}
\int_{\mathbb{R}^s}\,\varphi(\xi_1,\dots,\xi_s)\,d\nu_\mu(\xi_1,\dots,\xi_s)=
\int_{\mathbb{R}}\,\varphi(\lambda,\dots,\lambda)\,d\mu(\lambda),\quad \varphi\in C_c(\mathbb{R}^s),
\end{equation}
and define
\begin{equation}\label{sigmadef}
\sigma_{c,R}:=\delta_{c}\ast \frac{1}{\,R}\,\sum_{0\le k_1\le a_1 R-1}\,
\dots\sum_{0\le k_s\le a_s R-1}\,
\delta_{(-k_1/a_1,\dots,-k_1/a_s)}\ast \nu_\mu
\end{equation}
where $c\in \mathbb{R}^s$ and $R>0$. Let  $\mathcal{L}$ denote the
 lattice $\prod_{k=1}^s\,a_k^{-1}\,\mathbb{Z}$. Then, the following properties hold.
\begin{enumerate}[(a)]
\item For any compact $K\subset\mathbb{R}^s$, the set $\{\sigma_{c,R}(K),\,\,c\in  \mathbb{R}^s,\,\,
R\ge 1\}$ is bounded.
\item
Let $\sigma_j:=\sigma_{c_j,R_j}$ where $R_j\to \infty$ and $c_j\in \mathbb{R}^s$.
If the sequence $\{\sigma_j\}_{j\ge 1}$ converges to the measure $\sigma$
in the weak-$*$ topology, then $\sigma$
is $\mathcal{L}$-periodic.
\end{enumerate}
\end{lemma}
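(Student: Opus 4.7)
The plan is to exploit the key structural observation that $\nu_\mu$ is supported on the diagonal line $D=\{(\lambda,\dots,\lambda):\lambda\in\mathbb{R}\}\subset\mathbb{R}^s$, so that each of the $|B_R|\sim R^s\prod_i a_i$ translates making up $\sigma_{c,R}$ lives on a shifted copy of $D$, and only an order-$R$ subfamily of them can meet any prescribed compact set. The whole proof will convert this ``diagonal tube'' geometry into uniform quantitative estimates by combining an elementary counting argument with the translation-boundedness of $\mu$.

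For part (a), I would unfold
$$\sigma_{c,R}(K)=\frac{1}{R}\sum_{k\in B_R}\mu\bigl\{\lambda\in\mathbb{R}:c+\lambda\mathbf{1}-(k_1/a_1,\dots,k_s/a_s)\in K\bigr\},$$
where $\mathbf{1}=(1,\dots,1)$ and $B_R=\{k\in\mathbb{Z}^s:0\le k_i\le a_iR-1\}$. If $K$ lies in the cube of side $M$ centered at the origin, a summand is nonzero only when the $s$ intervals $[k_i/a_i-c_i-M/2,\,k_i/a_i-c_i+M/2]$ share a common point, which forces $|k_i/a_i-k_1/a_1-(c_i-c_1)|\le M$ for each $i\ge 2$. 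Fixing the pinning index $k_1$ (at most $\lceil a_1R\rceil$ choices) therefore leaves each remaining $k_i$ in an interval of length $2a_iM$, hence $O(a_iM)$ integer values; and for each such $k$, the corresponding $\lambda$-set is an interval of length at most $M$, whose $\mu$-mass is $O(M)$ by translation-boundedness. Multiplying the counts together and dividing by $R$ yields a bound on $\sigma_{c,R}(K)$ depending only on $M$, the $a_i$, and the translation-bound of $\mu$.

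For part (b), I would show $T_j\sigma=\sigma$ for each $j$, where $T_j$ denotes translation by $a_j^{-1}e_j$ and $e_j$ is the $j$-th standard basis vector; since these vectors generate $\mathcal{L}$, this is exactly $\mathcal{L}$-periodicity. The point is that $\sigma_{c,R}$ is already almost $T_j$-invariant: a reindexing of the sum over $k_j$ shows that $T_j\sigma_{c,R}-\sigma_{c,R}$ equals $1/R$ times exactly two ``boundary slices'' (the terms with $k_j=-1$ and $k_j=\lfloor a_jR\rfloor-1$, with the other coordinates ranging freely). Testing against any $\varphi\in C_c(\mathbb{R}^s)$ supported in a cube of side $M$ and running the counting argument from (a) with $k_j$ in the pinning role bounds each slice by a constant depending only on $\varphi$, $M$, and the $a_i$. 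Hence $(T_j\sigma_j-\sigma_j)(\varphi)=O(1/R_j)\to 0$, and since translation is continuous in the weak-$*$ topology on $C_c(\mathbb{R}^s)$, passing to the limit in $\sigma_j\to\sigma$ gives $T_j\sigma=\sigma$.

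The one nontrivial step --- and the main obstacle --- is recognizing how the diagonal support of $\nu_\mu$ collapses the $s$-dimensional sum over $B_R$ to an essentially one-parameter family of contributing indices. Once this counting is in place, both the uniform bound in (a) and the $O(1/R)$ decay of the translation defect in (b) follow from the same elementary estimate combined with translation-boundedness.
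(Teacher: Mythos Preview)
Your proposal is correct and follows essentially the same route as the paper. For (a), both you and the paper unfold $\sigma_{c,R}([-r,r]^s)$, use the diagonal support of $\nu_\mu$ to force all the $k_i/a_i-c_i$ to lie within $O(r)$ of a common value, pin one index ($k_1$), count $O(a_ir)$ admissible values for each remaining $k_i$, and bound the $\mu$-mass of the resulting $\lambda$-interval by translation-boundedness. For (b), the paper writes $(\delta_{a_j^{-1}e_j}-\delta_0)\ast\sigma_{c,R}$ via a telescoping sum as $(\delta_{N_j a_j^{-1}e_j}-\delta_0)$ convolved with the reduced measure $\rho_j$ obtained by dropping the $k_j$-sum, then shows $\rho_j(K)=O(1/R_j)$ by the same counting with $k_j$ absent; your description of this as ``two boundary slices, each bounded by a constant independent of $R$'' is exactly the same computation phrased more directly. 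The only cosmetic difference is that the paper proves $|(\delta_l-\delta_0)\ast\sigma_j|(K)\to 0$ for every compact $K$ and then passes to general $l\in\mathcal{L}$ via the decomposition $\delta_l-\delta_0=\sum_m\tau_m\ast(\delta_{a_m^{-1}e_m}-\delta_0)$, whereas you test directly against $\varphi\in C_c$ and invoke that the generators suffice; both are routine.
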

\begin{proof} If $r>0$ and $c=(c_1,\dots,c_s)$,  we have
\begin{align*}
&\sigma_{c,R}([-r,r]^s)=\int_{\mathbb{R}^s}\,\chi_{[-r,r]^s}(\xi)\,d\sigma_{c,R}(\xi)\\
&=\frac{1}{\,R}\,\sum_{0\le k_1\le a_1 R-1}
\dots\sum_{0\le k_s\le a_s R-1}\,\int_{\mathbb{R}^s}\,
\chi_{[-r,r]^s}(\xi+c+(-k_1/a_1,-k_2/a_2,\dots,-k_1/a_s))\,d\nu_\mu(\xi)\\
&=\frac{1}{\,R}\,\sum_{0\le k_1\le a_1 R-1}
\dots\sum_{0\le k_s\le a_s R-1}\,\int_{\mathbb{R}}\,
\prod_{m=1}^s\,\chi_{[-r,r]}(\lambda-k_m/a_m+c_m)
\,d\mu(\lambda)
\end{align*}
Note that if for some integer $k_1$ and some $m\ge 2$ and some integer $k$, we have
$$
\left([-r,r]+k_1/a_1-c_1\right)\cap \left([-r,r]+k/a_m-c_m\right)\ne \emptyset,
$$
then $|k/a_m-b_m|\le  2\,r$, where $b_m=c_m-c_1+k_1/a_1$. Hence,
$|k-a_m b_m|\le 2\,r\,a_m$ and the number of integers $k$ satisfying this inequality
can be at most $4\,r\,a_m+1$.
It follows that, if $C=\sup_{t\in \mathbb{R}}\mu([-r,r]+t)<\infty$, we have
$$
\sigma_{c,R}([-r,r]^s)\le \frac{C}{\,R}\,\sum_{0\le k_1\le a_1 R-1}\, \prod_{m=2}^s (4\,r\,a_m+1)
\le C\,a_1\,\prod_{m=2}^s (4\,r\,a_m+1),
$$
which proves (a).
To prove (b), it is enough to show that if $\nu_{l,j}:=(\delta_l-\delta_0)\ast \sigma_j$ and,
for any $l\in \mathcal{L}$, $|\nu_{l,j}|(K)\to 0$ as
$j\to \infty$ for any compact set $K\subset  \mathbb{R}^s$.
Indeed, if it is the case, then $\nu_{l,j}\to 0$ in the weak-$*$ topology
and, in particular, if $\sigma_j\to \sigma$
in the weak-$*$ topology, then
$$
(\delta_l-\delta_0)\ast \sigma=\lim_{j\to \infty}\,\nu_{l,j}=0,
$$
showing that $\sigma$ is $\mathcal{L}$-periodic.
We consider first the case $l=a_i^{-1}\,e_i$,
where $e_i$,   $i=1,\dots,s$, is the standard basis in $\mathbb{R}^s$.
We only deal with the case $i=1$ since the other cases are
similar.
We have then
\begin{align*}
&\left(\delta_{e_1/a_1}-\delta_0\right)*\sigma_j\\
&=\delta_{c_j}\ast (\delta_{e_1\,N_1/a_1}-\delta_0)\ast \frac{1}{\,R_j}\,\sum_{0\le k_2\le a_1 R_j-1}\,
\dots \sum_{0\le k_s\le a_s R_j-1}\,
\delta_{(0,-k_2/a_2,\dots,-k_1/a_s)}\ast \nu_\mu,
\end{align*}
where $N_1=\lfloor a_1 R_1\rfloor$ with $\lfloor x\rfloor$ being the largest integer
less than or equal to $x$.
  Let
$$
\rho_j:=\delta_{c_j}\ast \frac{1}{\,R_j}\,\sum_{0\le k_2\le a_1 R_j-1}\,
\dots\sum_{0\le k_s\le a_s R_j-1}\,
\delta_{(0,-k_2/a_2,\dots,-k_1/a_s)}\ast \nu_\mu.
$$
As in the proof of (a), if $r>0$,  we have, letting $C=\sup_{t\in \mathbb{R}}\mu([-r,r]+t)<\infty$, that
$$
\rho_j([-r,r]^s)\le \frac{C}{\,R_j} \prod_{m=2}^s (4\,r\,a_m+1)\to 0,\quad j\to \infty.
$$
For the same reason, $\left(\delta_{e_1\,N_j/a_1}\ast \rho_j\right)([-r,r]^s)\to 0$ as $j\to \infty$.
Hence,
\begin{align*}
\left|\left(\delta_{a_1^{-1} e_1}-\delta_0\right)*\sigma_j\right|([-r,r]^s)&=
\left|\left(\delta_{N_j a_1^{-1} e_1}-\delta_0\right)*\rho_j\right|([-r,r]^s)\\
&\le \left(\delta_{N_j a_1^{-1} e_1}+\delta_0\right)*\rho_j([-r,r]^s)\to 0,
\end{align*}
as $j\to \infty$ and our claim follows when $l=a_i^{-1}\,e_i$, $i=1,\dots,s$,
since $r>0$ is arbitrary.
In general, if $l\in \mathcal{L}$, we can write
$$
\delta_l-\delta_0=\sum_{m=1}^s\,\tau_m\ast\left(\delta_{a_m^{-1} e_m}-\delta_0\right)
$$
where each $\tau_m$ is a finite sum of Dirac masses. Hence,
$$
|\left(\delta_l-\delta_0\right)*\sigma_j|(K)
=\left|\sum_{m=1}^s\,\tau_m\ast\left(\delta_{a_m^{-1} e_m}-\delta_0\right)*\sigma_j\right|(K)\to 0,
$$
as $j\to \infty$, for any compact set $K\subset  \mathbb{R}^s$, which proves our claim.
\end{proof}

If $c>0$ and $x$ is a real number,
we denote by $x\,(\text{mod}\,c)$ the unique real number $y$ in the interval
$[0,c)$ such that $x-y\in c\,\mathbb{Z}$.
The following theorem will be the key result to answer the questions raised at the beginning
of this section. We will only proof the equivalence of the upper-bounds inequalities
(i.e.~those involving the constant $B$) as the lower-bound ones
(involving the constant $A$ if $A>0$) can be obtained by very
similar techniques. We leave the details to the interested reader.

\begin{theorem}\label{frame1}
Let $a_1,\dots, a_s$ be $s$ positive real numbers linearly independent over $\mathbb{Q}$, with $s\ge 1$,
and let $G$ be the subgroup of $\mathbb{R}$ generated by $a_1,\dots, a_s$, i.e.
$$
G=\left\{\sum_{i=1}^s\,m_i\,a_i,\,\,m_i\in \mathbb{Z}\right\}.
$$
Let $\mu$ be a positive translation-bounded Borel measure on $\mathbb{R}$
and associate with it the  positive Borel measure $\nu_\mu$  on $\mathbb{R}^s$
defined by (\ref{nudef}). Let $A,B$ be real constants with $A\ge 0$ and $B>0$.
Then, the following are equivalent:

\begin{enumerate}[(a)]
\item
For any distinct $x_1,\dots,x_m\in G$
and any $c_1,\dots,c_m\in \mathbb{C}$, we have
$$
\limsup_{R\to \infty}\,\sup_{t\in \mathbb{R}}\,\frac{1}{R}\int_{[t, t+R]}\,
\big|\sum_{i=1}^m\,c_i\,e^{-2\pi i x_i\,\lambda}\big|^2\,d\mu(\lambda)\le\,B\,\sum_{i=1}^m\,|c_i|^2.
$$
and
$$
\liminf_{R\to \infty}\,\inf_{t\in \mathbb{R}}\,\frac{1}{R}\int_{[t, t+R]}\,
\big|\sum_{i=1}^m\,c_i\,e^{-2\pi i x_i\,\lambda}\big|^2\,d\mu(\lambda)\ge\,A\,\sum_{i=1}^m\,|c_i|^2.
$$

\item Any weak-$*$ limit $\sigma$ of a sequence extracted from the collection
$$
\delta_{(-t,\dots,-t)}\ast \frac{1}{a_1\,\dots\,a_s\,R}\,\sum_{0\le k_1\le a_1 R-1}\,
\dots,\sum_{0\le k_s\le a_s R-1}\,
\delta_{(-k_1/a_1,\dots,-k_1/a_s)}\ast \nu_\mu
$$
where $t\in \mathbb{R}$ and $R\to \infty$
is absolutely continuous with respect to the Lebesgue measure
and  with
Radon-Nikodym derivative
$$
\frac{d\sigma}{d\xi}=G\quad\text{satisfying}\,\,A\le G\le B \,\,\text{a.e.~on}\,\,\mathbb{R}^s.
$$
\item For any intervals $I_1\subset [0,1/a_1),\dots, I_s\subset [0,1/a_s)$,
we have
$$
\limsup_{R\to \infty}\,\sup_{t\in \mathbb{R}}\,\frac{1}{R\,a_1\dots a_s}\,
\mu\left(E(t, R, I_1,\dots,I_s)\right)
\le |I_1|\,\dots |I_s|\,B
$$
and
$$
\liminf_{R\to \infty}\,\inf_{t\in \mathbb{R}}\,\frac{1}{R\,a_1\dots a_s}\,
\mu\left(E(t, R, I_1,\dots,I_s)\right)
\ge |I_1|\,\dots |I_s|\,A,
$$
where
$$
E(t, R, I_1,\dots,I_s)=\left\{\lambda\in \mathbb{R}:\,\,
t\le \lambda\le t+R, \lambda\,({\rm mod}\,a_1^{-1})\in I_1,\dots,\lambda\,({\rm mod}\,a_s^{-1})\in I_s\right\}.
$$

\end{enumerate}
\end{theorem}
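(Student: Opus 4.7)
The plan is to establish the cycle $(a)\Rightarrow(b)\Rightarrow(c)\Rightarrow(a)$; as the authors indicate, we focus on the upper-bound inequalities, the lower-bound ones being analogous. The unifying framework is the identification of $\mathcal{L}$-periodic functions on $\mathbb{R}^s$ with functions on the torus $\mathbb{T}^s_{\mathbf{a}}:=\mathbb{R}^s/\mathcal{L}$, combined with the diagonal embedding $\iota(\lambda)=(\lambda,\ldots,\lambda)$. Because $a_1,\ldots,a_s$ are $\mathbb{Q}$-linearly independent, the characters $\chi_m(\xi)=\prod_{j=1}^s e^{-2\pi i m_j a_j \xi_j}$, indexed by $m\in\mathbb{Z}^s$, restrict under $\iota$ to the exponentials $e^{-2\pi i x\lambda}$ with $x=\sum_j m_j a_j\in G$, and distinct $m$'s produce distinct $x$'s, giving a bijection between the character group of $\mathbb{T}^s_{\mathbf a}$ and $G$. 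Consequently $\bigl|\sum_i c_i e^{-2\pi i x_i\lambda}\bigr|^2=F(\iota(\lambda))$ for a non-negative trigonometric polynomial $F$ on $\mathbb{T}^s_{\mathbf a}$ whose integral over the fundamental domain $\mathrm{FD}=\prod_j[0,a_j^{-1})$ equals $(a_1\cdots a_s)^{-1}\sum_i|c_i|^2$ by orthogonality.

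The technical core is a direct computation. If $\psi\in C_c(\mathbb{R}^s)$ is non-negative with $\sum_{\ell\in\mathcal{L}}\psi(\cdot+\ell)\equiv 1$ and $F$ is continuous and $\mathcal{L}$-periodic, then the $\mathcal{L}$-periodicity of $F$ allows one to factor, for $c=(-t,\ldots,-t)$,
$$\int F\psi\,d\sigma_{c,R}=\frac{1}{a_1\cdots a_s\,R}\int F(\iota(\lambda-t))\,\Psi_{t,R}(\lambda)\,d\mu(\lambda),$$
where $\Psi_{t,R}$ equals $1$ on a $\lambda$-interval $[t+M,t+R-M]$ (with $M$ a bound on the support of $\psi$) and deviates from $1$ only on a set of $\lambda$'s of $R$-independent length. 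Translation-boundedness of $\mu$ thus controls the boundary contribution by $O(1/R)$ after normalization, so the leading term is $(a_1\cdots a_s\,R)^{-1}\int_{[t+M,t+R-M]}F(\iota(\lambda-t))\,d\mu(\lambda)$.

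For $(a)\Rightarrow(b)$, extract a weak-$*$ limit $\sigma_{c_j,R_j}\to\sigma$ (Lemma~\ref{per}(a)), which is $\mathcal{L}$-periodic by Lemma~\ref{per}(b) and descends to $\tilde\sigma$ on $\mathbb{T}^s_{\mathbf a}$. Applying the identity above with $F=|P|^2$ and invoking (a) yields $\int_{\mathrm{FD}}F\,d\tilde\sigma\le B\int_{\mathrm{FD}}F\,d\xi$, and a Stone--Weierstrass approximation extends this to all non-negative continuous $F$, giving $d\tilde\sigma\le B\,d\xi$. For $(b)\Rightarrow(c)$, pick sequences $(t_j,R_j)$ realizing the limsup in (c), extract a weak-$*$ convergent subsequence $\sigma_{(-t_j,\ldots,-t_j),R_j}\to\sigma$, and apply Lemma~\ref{tau} to pass weak-$*$ convergence through the product of the continuous bump $\psi$ and the $\mathcal{L}$-periodic indicator of $I_1\times\cdots\times I_s$ (which, restricted to a compact set supporting $\psi$, is a finite sum of box indicators). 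The identity identifies the left-hand side with $\mu(E(t_j,R_j,I_1,\ldots,I_s))/(a_1\cdots a_s R_j)$ up to $O(1/R_j)$, while (b) bounds the right-hand side by $B|I_1|\cdots|I_s|$. Finally, for $(c)\Rightarrow(a)$, sandwich $F=|P|^2$ between step functions $F^\pm_\epsilon$ built from periodic box indicators with $\int_{\mathrm{FD}}(F^+_\epsilon-F^-_\epsilon)\,d\xi<\epsilon$, apply (c) box-by-box, and let $\epsilon\to 0$.

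The main obstacle is the passage to the limit in $(b)\Rightarrow(c)$: the test function is the discontinuous indicator of a box, so ordinary weak-$*$ convergence does not suffice. This is precisely the scenario handled by Lemma~\ref{tau}, which exploits the absolute continuity and the $L^\infty$-bound on the limit density (guaranteed by (b)) to upgrade weak-$*$ convergence to convergence against products of continuous functions with box indicators. A secondary technicality is verifying that the defect of $\Psi_{t,R}$ from $1$ lives on a $\lambda$-set of fixed (bounded) length rather than one growing with $R$, ensuring the relative boundary error is $O(1/R)$. The $\mathbb{Q}$-linear independence of the $a_j$ is essential throughout: it underpins both the bijection between characters and $G$ and, via Kronecker's theorem, the equidistribution of $\iota(\mathbb{R})$ in $\mathbb{T}^s_{\mathbf a}$ that makes box counts faithful proxies for torus volumes.
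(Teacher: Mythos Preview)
Your argument is correct and rests on the same pillars as the paper's proof: Lemma~\ref{per} for weak-$*$ compactness and $\mathcal{L}$-periodicity of limits, Lemma~\ref{tau} to push limits through box indicators, density of trigonometric polynomials (Stone--Weierstrass), and step-function approximation. The differences are purely organizational: you run a cycle $(a)\Rightarrow(b)\Rightarrow(c)\Rightarrow(a)$ and use a compactly supported partition of unity $\psi$ (with the auxiliary function $\Psi_{t,R}$) to relate $\int F\psi\,d\sigma_{c,R}$ to averages of $|P|^2$ against $\mu$, whereas the paper proves $(a)\Leftrightarrow(b)$ and $(b)\Leftrightarrow(c)$ separately and reaches the analogous identity (\ref{L-result}) by an explicit decomposition of $[t,t+R]^s$ into lattice boxes $I(t,k_1,\dots,k_s)$; both routes handle the same boundary error by translation-boundedness and arrive at the same inequalities on the fundamental domain.
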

\begin{proof} As mentioned above we will only prove the case $A=0$ of the theorem.

Fix $x_1,\dots,x_m\in G$, with $x_i=\sum_{j=1}^s\,n_{ij}\,a_j$, $n_{ij}\in \mathbb{Z}$,
and let $c_1,\dots,c_m\in \mathbb{C}$. Note that
\begin{align*}
&\limsup_{R\to \infty}\,\sup_{t\in \mathbb{R}}\,\frac{1}{R}\,
\int_{[t, t+R]^s}\,
\big|\sum_{i=1}^m\,c_i\,e^{-2\pi i (\sum_{j=1}^s\,n_{ij}\,a_j\,\xi_j)}\big|^2\,d\nu_{\mu}(\xi_1,\dots,\xi_s)\\
&=\limsup_{R\to \infty}\,\sup_{t\in \mathbb{R}}\,\frac{1}{R}\int_{[t, t+R]}\,
\big|\sum_{i=1}^m\,c_i\,e^{-2\pi i x_i\,\lambda}\big|^2\,d\mu(\lambda):=L.
\end{align*}
Furthermore, replacing the set $[t,t+R]^s$ by the smaller set
$$
[t, t+N_1(R)\,a_1^{-1})\times\dots\times[t,t+ N_s(R)\,\,a_s^{-1})
$$
where $N_k(R)$ are the unique integers satisfying
$$
N_k(R)\,a_k^{-1}\le R<(N_k(R)+1)\,a_k^{-1},\quad k=1,\dots,s,
$$
does not change the first of the limits above.
Indeed, letting $I=[t,t+R]$ and $J_k=[t, t+N_k(R)\,a_k^{-1})$ for $k=1,\dots,s$,
we have
$$
I^s\setminus \prod_{k=1}^s\,J_k\subset \left[(I\setminus J_1)\times I^{s-1}\right]\cup
\left[I\times(I\setminus J_2)\times I^{s-2}\right]\cup\dots\cup \left[I^{s-1}\times(I\setminus J_s)\right]
:=\bigcup_{k=1}^s\,C_k.
$$
Letting $Q(\xi)=Q(\xi_1,\dots,\xi_s)=\sum_{i=1}^m\,c_i\,e^{-2\pi i (\sum_{j=1}^s\,n_{ij}\,a_j\,\xi_j)}$
we have
\begin{align*}
&\frac{1}{R}\,\int_{I^s\setminus \prod_{k=1}^s\,J_k}\,| Q|^2\,d\nu_{\mu}\le
\sum_{k=1}^s\,\frac{1}{R}\,\int_{C_k}\,| Q|^2\,d\nu_{\mu}\\
&\le\|Q\|_{\infty}^2\, \sum_{k=1}^s\,
\frac{1}{R}\,\int_{C_k}\,1\,d\nu_{\mu}=\|Q\|_{\infty}^2\, \sum_{k=1}^s\,
\frac{1}{R}\,\int_{I\setminus J_k}\,1\,d\mu\\
&\le \|Q\|_{\infty}^2\,\frac{1}{R}\,\sum_{k=1}^s\,\mu([t+N_k(R)\,a_k^{-1},t+(N_k(R)+1)\,a_k^{-1}])\to 0,
\,\,\text{as}\,\,R\to \infty,
\end{align*}
since $\mu$ is translation bounded.
We have thus
\begin{align*}
&L=\limsup_{R\to \infty}\,\sup_{t\in \mathbb{R}}\,\frac{1}{R}\,
\int_{\prod_{k=1}^s [t, t+N_k(R)\,\,a_k^{-1})}\,
\big|Q(\xi)\big|^2\,d\nu_{\mu}(\xi)\\
&=\limsup_{R\to \infty}\,\sup_{t\in \mathbb{R}}\,\frac{1}{R}\,\sum_{0\le k_1\le  a_1 R-1}
\,\dots\, \sum_{0\le k_s\le a_s R-1}\,
\int_{I(t,k_1,\dots,k_s)}\,
\big|Q(\xi)\big|^2\,d\nu_{\mu}(\xi).\\
\end{align*}
where
$$
I(t,k_1,\dots,k_s):=
[t+k_1\,a_1^{-1}, t+(k_1+1)\,a_1^{-1})\times\dots\times[t+k_s\,a_s^{-1},t+(k_s+1)\,a_s^{-1}).
$$
Letting
\begin{equation}\label{sigdef}
\sigma_{t,R}=\delta_{(-t,\dots,-t)}\ast \frac{1}{a_1\,\dots\,a_s\,R}\,\sum_{0\le k_1\le a_1 R-1}\,
\dots\,\sum_{0\le k_s\le a_s R-1}\,
\delta_{(-k_1/a_1,\dots,-k_1/a_s)}\ast \nu_\mu,
\end{equation}
and, using the periodicity of $Q$ with respect to the lattice
$\mathcal{L}:=\prod_{k=1}^s\,a_k^{-1}\,\mathbb{Z}$,
we have
\begin{equation}\label{L-result}
L=\limsup_{R\to \infty}\,\sup_{t\in \mathbb{R}}\,a_1\dots a_s\,
\int_{[0, a_1^{-1})\times\dots\times[0,a_s^{-1})}\,
\big|Q(\xi)\big|^2\,d\sigma_{t,R}(\xi)
\end{equation}
Using part (a) of  Lemma \ref{per}, with $c=(-t,\dots,-t)$,
it follows that the set
$$\{\sigma_{t,R}(K),\,\,t\in \mathbb{R},\,\,R\ge 1\}
$$ is bounded for
any compact subset $K$ of $\mathbb{R}^s$. Hence, any sequence extracted from the
collection of measures $\{\sigma_{t,R},\,\,t\in \mathbb{R},\,\,R\ge 1\}$ must have a
weak-$*$  convergent subsequence.
Furthermore, by part (b) of  Lemma \ref{per},
any weak-$*$ limit of a sequence $\sigma_{t_j,R_j}$, where $R_j\to \infty$,
must be periodic  with respect to the lattice $\mathcal{L}$ defined above.

If (b) holds, consider   sequences $\{t_j\}$ and $\{R_j\}$ with  $R_j\to \infty$
such that
$$
L=\lim_{j\to \infty}\,a_1\dots a_s\,
\int_{[0, a_1^{-1})\times[\dots\times0,a_s^{-1})}\,
\big|Q(\xi)\big|^2\,d\sigma_{t_j,R_j}(\xi).
$$
By weak-$*$ compactness, we can assume, by passing to a subsequence if necessary,
that  $\{\sigma_{t_j,R_j}\}$
is weak-$*$
convergent to a measure $\sigma$ as $j\to \infty$.
Using our hypothesis, $\sigma$ is absolutely continuous with
respect to the Lebesgue measure
and  with
Radon-Nikodym derivative $d\sigma/d\xi=G$ and $\|G\|_\infty\le B$. Using Lemma \ref{tau}
with $F=|Q|^2$, it  follows that
\begin{align*}
L&=a_1\dots a_s\,
\int_{[0, a_1^{-1})\times\dots\times[0,a_s^{-1})}\,
\big|Q(\xi)\big|^2\,G(\xi)\,d\xi\\
&\le B\,a_1\dots a_s\,
\int_{[0, a_1^{-1})\times\dots\times[0,a_s^{-1})}\,
\big|Q(\xi)\big|^2\,d\xi\\
&=B\,a_1\dots a_s\,
\int_{[0, a_1^{-1})\times\dots\times [0,a_s^{-1})}\,
\big|\sum_{i=1}^m\,c_i\,e^{-2\pi i (\sum_{j=1}^s\,n_{ij}\,a_j\,\xi_j)}\big|^2\,d\xi\\
&=B\,a_1\dots a_s\,\sum_{i,l=1}^m\,c_i\,\overline{c_l}\,
\int_{[0, a_1^{-1})\times\dots\times[0,a_s^{-1})}\,
e^{-2\pi i \sum_{j=1}^s\,(n_{ij}-n_{lj})\,a_j\,\xi_j}\,d\xi\\
&=B\,a_1\dots a_s\,\sum_{i,l=1}^m\,c_i\,\overline{c_l}\,
\prod_{j=1}^s\,
\int_{[0, a_j^{-1})}\,
e^{-2\pi i \,(n_{ij}-n_{lj})\,a_j\,\xi_j}\,d\xi_j\\
&=B\,\sum_{i=1}^m\,|c_i|^2,
\end{align*}
showing that (a) holds. Note that, in the last step of the previous computation, we used the
fact that $n_{ij}=n_{lj}$ for all $j=1,\dots,m$, implies that $x_i=x_l$ (using the
linear independence of $a_1,\dots, a_s$ over $\mathbb{Q}$) and thus that $i=l$,
since the $x_i$'s are assumed to be distinct.

Conversely, if (a) holds and $\sigma$ is a  weak-$*$ limit
of a sequence $\{\sigma_{t_j,R_j}\}$, with $R_j\to \infty$ as $j\to \infty$,
then $\sigma$ is periodic  with respect to the lattice $\mathcal{L}$ and
we have by the computation above, that
$$
a_1\dots a_s\,
\int_{[0, a_1^{-1})\times\dots\times[0,a_s^{-1})}\,
\big|Q(\xi)\big|^2\,d\sigma(\xi)
\le B\,a_1\dots a_s\,\int_{[0, a_1^{-1})\times\dots\times[0,a_s^{-1})}\,
\big|Q(\xi)\big|^2\,d\xi
$$
for any trigonometric polynomial $Q(\xi)=\sum_{i=1}^m\,c_i\,
e^{-2\pi i (\sum_{j=1}^s\,n_{ij}\,a_j\,\xi_j)}$.
Since the space of such trigonometric polynomials is dense (with respect to the
sup-norm) in the space of continuous functions
which are periodic with respect to the lattice $\mathcal{L}$,
we have
$$
\int_{[0, a_1^{-1})\times\dots\times[0,a_s^{-1})}\,
\sum_{l\in \mathcal{L}}\phi(\xi+l)\,d\sigma(\xi) \le B\,
\int_{[0, a_1^{-1})\times\dots\times[0,a_s^{-1})}\,
\sum_{l\in \mathcal{L}}\phi(\xi+l)\,d\xi
$$
for any compactly supported continuous function $\phi\ge 0$ on  $\mathbb{R}^s$
and thus, since $\sigma$ is $\mathcal{L}$-periodic,
$$
\int_{\mathbb{R}^s}\,\phi(\xi)\,d\sigma(\xi) \le B\,\int_{\mathbb{R}^s}\,\phi(\xi)\,d\xi.
$$
Standard arguments show that $\sigma$ must  absolutely continuous with respect to the Lebesgue measure
and  with a
Radon-Nikodym derivative $G\in L^\infty(\mathbb{R}^s)$ satisfying $\|G\|_\infty\le B$, which shows that (b) holds.
Thus the statements (a) and (b) are equivalent.
Now, consider intervals $I_j\subset [0,1/a_j)$ for $1\le j\le s$
and define the sets $F_j=\bigcup_{k\in \mathbb{Z}}\,I_j+k/a_j$ and let $F=\prod_{j=1}^s\,F_j\subset\mathbb{R}^s$.
Note that
$$
\left\{\lambda\in \mathbb{R}:\,\,
t\le \lambda\le t+R, \lambda\,({\rm mod}\, a_1^{-1})\in I_1,\dots, \lambda\,({\rm mod}\, a_s^{-1})\in I_s\right\}
=[t,t+R]\cap F_1\cap\dots \cap F_s.
$$
and that the function
$$
\chi_F(\xi)=\chi_{F_1}(\xi_1)\,\dots \chi_{F_s}(\xi_s),\quad \xi=(\xi_1,\dots,\xi_s)\in \mathbb{R}^s
$$
is $\mathcal{L}$-periodic. Let
\begin{align*}
M&:=\limsup_{R\to \infty}\,\sup_{t\in \mathbb{R}}\,\frac{1}{R}\,
\mu\left(\left\{\lambda\in \mathbb{R}:\,\,
t\le \lambda\le t+R, \lambda\,({\rm mod}\, a_1^{-1})\in I_1,\dots, \lambda\,({\rm mod}\, a_s^{-1})\in I_s\right\}\right)\\
&=\limsup_{R\to \infty}\,\sup_{t\in \mathbb{R}}\,\frac{1}{R}\int_{[t, t+R]}\,
\chi_{F_1}(\lambda)\,\dots \chi_{F_s}(\lambda)
\,d\mu(\lambda)\\
&=\limsup_{R\to \infty}\,\sup_{t\in \mathbb{R}}\,\frac{1}{R}\,
\int_{[t, t+R]^s}\,\chi_{F_1}(\xi_1)\,\dots \chi_{F_s}(\xi_s)
\,d\nu_{\mu}(\xi_1,\dots,\xi_s)\\
&=\limsup_{R\to \infty}\,\sup_{t\in \mathbb{R}}\,\frac{1}{R}\,
\int_{[t, t+R]^s}\,\chi_{F}(\xi)\,d\nu_{\mu}(\xi).
\end{align*}
By a  computation similar to the one done to obtain (\ref{L-result})
 (with $\chi_{F}(\xi)$ replacing $|Q(\xi)|^2$)
we obtain, using the $\mathcal{L}$-periodicity of $\chi_{F}(\xi)$, that
$$
M=\limsup_{R\to \infty}\,\sup_{t\in \mathbb{R}}\,a_1\dots a_s\,
\int_{[0, a_1^{-1})\times\dots\times[0,a_s^{-1})}\,\chi_{F}(\xi)
\,d\sigma_{t,R}(\xi)
$$
with $\sigma_{t,R}$ as in (\ref{sigdef}). Let $\sigma_{t_j,R_j}$ a sequence with $R_j\to \infty$
such that
$$
M=\lim_{j\to \infty}\,a_1\dots a_s\,
\int_{[0, a_1^{-1})\times\dots\times[0,a_s^{-1})}\,\chi_{F}(\xi)
\,d\sigma_{t_j,R_j}(\xi).
$$
By a weak-$*$ compactness argument, we can assume
that ${\sigma_{t_j,R_j}}$ converges in the weak-$*$ topology to a $\mathcal{L}$-periodic measure $\sigma$.
If (b) holds, we can use Lemma \ref{tau}
applied to the sequence $\{\sigma_{t_j,R_j}\}$ to show that
$$
M=a_1\dots a_s\,
\int_{[0, a_1^{-1})\times\dots\times[0,a_s^{-1})}\,\chi_{F}(\xi)
\,G(\xi)\,d\xi.
$$
We have thus
$$
M\le B\,a_1\dots a_s\,|I_1|\dots |I_s|,
$$
which shows that (c) holds. Conversely, if (c)  holds, then for any intervals $I_j$ with
$I_j\subset [0,1/a_j)$ for $1\le j\le s$, we have
$$
\limsup_{R\to \infty}\,\sup_{t\in \mathbb{R}}\,
\int_{I_1\times\dots\times I_s}\,1
\,d\sigma_{t,R}(\xi)\le B\,|I_1|\dots |I_s|.
$$
Let $H_r= I_1^r\times\dots\times I_s^r$, for $1\le r\le R$, where, for each
$r$, $I_j^r$ is an interval contained in $[0,1/a_j)$.
If $H_r\cap H_s=\emptyset$ when $r\ne s$ and
 $h=\sum_{r=1}^R\,c_r\,\chi_{H_r}$ with $c_r\ge 0$,
we have
$$
\limsup_{R\to \infty}\,\sup_{t\in \mathbb{R}}\,
\int_{\mathbb{R}^s}\,h(\xi)
\,d\sigma_{t,R}(\xi)\le B\,\sum_{r=1}^R\,c_r\,|I_1^r|\dots |I_s^r|
=B\,\int_{\mathbb{R}^s}\,h(\xi)\,d\xi.
$$
Suppose that $\sigma$ is a weak-$*$ limit of a sequence ${\sigma_{t_j,R_j}}$ with $R_j\to \infty$
and let $\phi\ge 0$ be a compactly supported continuous function of $\mathbb{R}^s$.
We have
$$
\int_{\mathbb{R}^s}\,\phi(\xi)\,d\sigma(\xi)=\lim_{j\to \infty}\,\int_{\mathbb{R}^s}\,\phi(\xi)
\,d\sigma_{t_j,R_j}(\xi)=
\lim_{j\to \infty}\,\sum_{l\in \mathcal{L} }\,\int_{[0, a_1^{-1})\times\dots\times[0,a_s^{-1})-l}\,
\phi(\xi)
\,d\sigma_{t_j,R_j}(\xi)
$$
where only a finite number of terms are non-zero in the last series, since $\phi$ is compactly supported.
Hence,
\begin{align*}
&\int_{\mathbb{R}^s}\,\phi(\xi)\,d\sigma(\xi)=
\lim_{j\to \infty}\,\sum_{l\in \mathcal{L} }\,
\int_{[0, a_1^{-1})\times\dots\times[0,a_s^{-1})}\,
\phi(\xi-l)
\,d\left(\delta_{l}*\sigma_{t_j,R_j}\right)(\xi)
\\
&=\lim_{j\to \infty}\,\sum_{l\in \mathcal{L} }\,
\int_{[0, a_1^{-1})\times\dots\times[0,a_s^{-1})}\,
\phi(\xi-l)
\,d\sigma_{t_j,R_j}(\xi)\\
&+\lim_{j\to \infty}\,\sum_{l\in \mathcal{L} }\,
\int_{[0, a_1^{-1})\times\dots\times[0,a_s^{-1})}\,
\phi(\xi-l)
\,d\nu_j(\xi)
\end{align*}
where $\nu_j:=(\delta_{l}-\delta_0)*\sigma_{t_j,R_j}$.
Using part (b) of Lemma \ref{per} and Lemma  \ref{tau}, the second limit above must be zero.

Hence,
$$
\int_{\mathbb{R}^s}\,\phi(\xi)\,d\sigma(\xi)=\lim_{j\to \infty}\,
\int_{[0, a_1^{-1})\times\dots\times[0,a_s^{-1})}\,
\psi(\xi)\,d\sigma_{t_j,R_j}(\xi),
$$
where
$$
\psi(\xi)=\sum_{l\in \mathcal{L}}\,
\phi(\xi+l),\quad \xi\in \mathbb{R}^s,
$$
is continuous and $\mathcal{L}$-periodic. Since the restriction of $\psi$ to
the set $[0, a_1^{-1})\times\dots\times[0,a_s^{-1})$ is uniformly continuous, we can find,
for any $\epsilon>0$, disjoints sets $H_r$, $r=1,\dots,R$, as above and constants
$c_r,d_r\ge 0$ such that
if $h_1=\sum_{r=1}^R\,c_r\,\chi_{H_r}$ and $h_2=\sum_{r=1}^R\,d_r\,\chi_{H_r}$,
we have  $h_1\le \psi \le h_2$ and
$$
h_2-h_1\le \epsilon
$$
on the set $[0, a_1^{-1})\times\dots\times[0,a_s^{-1})$.
We have thus,

\begin{align*}
&\int_{\mathbb{R}^s}\,\phi(\xi)
\,d\sigma(\xi)=\lim_{j\to \infty}\,\int_{[0, a_1^{-1})\times\dots\times[0,a_s^{-1})}\,
\psi(\xi)\,d\sigma_{t_j,R_j}(\xi)\\
&\le \limsup _{j\to \infty}\,\int_{[0, a_1^{-1})\times\dots\times[0,a_s^{-1})}\,
h_2(\xi)\,d\sigma_{t_j,R_j}(\xi)\le B\,\int_{[0, a_1^{-1})\times\dots\times[0,a_s^{-1})}\,h_2(\xi)\,d\xi\\
&\le B\,\int_{[0, a_1^{-1})\times\dots\times[0,a_s^{-1})}\,\left(h_1(\xi)+\epsilon\right)\,d\xi
\le  B\,\int_{[0, a_1^{-1})\times\dots\times[0,a_s^{-1})}\,\psi(\xi)\,d\xi
+B\,a_1^{-1}\,\dots a_s^{-1}\,\epsilon.
\end{align*}

Since $\epsilon>0$ is arbitrary, it follows that
$$
\int_{\mathbb{R}^s}\,\phi(\xi)
\,d\sigma(\xi)\le  B\,\int_{[0, a_1^{-1})\times\dots\times[0,a_s^{-1})}\,\psi(\xi)\,d\xi
= B\,\int_{\mathbb{R}^s}\,\phi(\xi)\,d\xi.
$$
In particular, we have that, for any complex-valued compactly supported continuous function on  $\mathbb{R}^s$,
$$
\int_{\mathbb{R}^s}\,|\phi(\xi)|
\,d\sigma(\xi)\le  B\,\int_{\mathbb{R}^s}\,|\phi(\xi)|\,d\xi.
$$
Standard arguments show that $\sigma$ must be  absolutely continuous with respect to the Lebesgue measure
and  with a
Radon-Nikodym derivative $G$ satisfying $\|G\|_{\infty}\le B$, proving (b).
\end{proof}

\begin{corollary}\label{tightframe} Under the same assumptions as Theorem \ref{frame1}, the following
statements are equivalent:
\begin{enumerate}[(a)]
\item
There exists a positive constants $A$ such that, for any distinct $x_1,\dots,x_m\in G$
and any $c_1,\dots,c_m\in \mathbb{C}$, we have
$$
\lim_{R\to \infty}\,\frac{1}{R}\int_{[t, t+R]}\,
\big|\sum_{i=1}^m\,c_i\,e^{-2\pi i x_i\,\lambda}\big|^2\,d\mu(\lambda)=A\,\sum_{i=1}^m\,|c_i|^2,
$$
uniformly for $t\in \mathbb{R}$.

\item There exists a constant  $A>0$
 such that any weak-$*$ limit $\sigma$ of a sequence extracted from the collection
$$
\delta_{(-t,\dots,-t)}\ast \frac{1}{a_1\,\dots\,a_s\,R}\,\sum_{0\le k_1\le a_1 R-1}\,
\dots,\sum_{0\le k_s\le a_s R-1}\,
\delta_{(-k_1/a1,\dots,-k_1/a_s)}\ast \nu_\mu
$$
is equal to the absolutely continuous measure $d\sigma=A\,d\xi$, where $d\xi$ represents the Lebesgue
measure on $\mathbb{R}^s$.
\item There exists a constant $A>0$ such that,
for any intervals $I_1\subset [0,1/a_1),\dots, I_s\subset [0,1/a_s)$,
we have
$$
\lim_{R\to \infty}\,\frac{1}{R\,a_1\dots a_s}\,
\mu\left(E(t, R, I_1,\dots,I_s)\right)
=|I_1|\,\dots |I_s|\,A,
$$
uniformly for $t\in \mathbb{R}$,
where
$$
E(t, R, I_1,\dots,I_s)=\left\{\lambda\in \mathbb{R}:\,\,
t\le \lambda\le t+R, \lambda\,({\rm mod}\, a_1^{-1})\in I_1,\dots, \lambda\,({\rm mod}\, a_s^{-1})\in I_s\right\}.
$$
\end{enumerate}
\end{corollary}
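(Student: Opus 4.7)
The plan is to obtain Corollary \ref{tightframe} as an immediate consequence of Theorem \ref{frame1} applied twice, once with the upper-bound constant $B$ and once with the lower-bound constant $A$, both equal to the single constant $A$ appearing in the corollary. The only real work is to translate the statement that the relevant averages converge \emph{uniformly} in $t$ into the $\limsup\sup / \liminf\inf$ formulation used in Theorem \ref{frame1}.

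First I would record the following elementary real-variable observation: if $\{I(t,R)\}_{t\in\mathbb{R},R>0}$ is any family of real numbers and $L\in\mathbb{R}$, then $\lim_{R\to\infty}I(t,R)=L$ uniformly in $t$ is equivalent to the two inequalities
$$
\limsup_{R\to\infty}\sup_{t\in\mathbb{R}}I(t,R)\le L\quad\text{and}\quad\liminf_{R\to\infty}\inf_{t\in\mathbb{R}}I(t,R)\ge L.
$$
The forward direction is immediate from the definition of uniform convergence; the reverse follows because the chain $L\le \liminf\inf\le \liminf\sup\le\limsup\sup\le L$ forces all four quantities to equal $L$, from which the standard $\varepsilon$-$R_0$ formulation of uniform convergence drops out.

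With this in hand, the equivalence (a)$\Leftrightarrow$(c) of the corollary will follow by specializing Theorem \ref{frame1} with both constants equal to $A$: condition (a) of the corollary becomes exactly the pair of inequalities in (a) of Theorem \ref{frame1} with $A=B$, and similarly for (c) via condition (c) of Theorem \ref{frame1}. For (a)$\Leftrightarrow$(b), I would argue as follows. If condition (a) of the corollary holds, then by Theorem \ref{frame1} every weak-$*$ limit $\sigma$ of the sequence $\sigma_{t_j,R_j}$ is absolutely continuous with Radon-Nikodym derivative $G$ satisfying $A\le G\le A$ a.e., hence $G=A$ a.e., and consequently $\sigma=A\,d\xi$, which is condition (b). Conversely, if (b) holds, every weak-$*$ limit of $\{\sigma_{t,R}\}$ equals $A\,d\xi$, so in particular the Radon-Nikodym derivative is sandwiched by $A$ and $A$, and the reverse implication (b)$\Rightarrow$(a) of Theorem \ref{frame1} yields the two inequalities that combine (via the observation above) into uniform convergence.

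The only delicate point — and the one I would be careful to write out — is verifying that the family $\{\sigma_{t,R}\}$ is locally uniformly bounded so that every sequence with $R_j\to\infty$ admits a weak-$*$ convergent subsequence. This is exactly part (a) of Lemma \ref{per} together with the translation-boundedness of $\mu$, and it is this compactness that allows us to pass freely between the ``every weak-$*$ limit equals $A\,d\xi$'' formulation of (b) and the $\limsup\sup / \liminf\inf$ formulations of (a) and (c). Beyond that, no new ideas are needed, as the entire content of the corollary is captured by the $A=B$ case of Theorem \ref{frame1}.
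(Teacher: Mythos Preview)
Your proposal is correct and matches the paper's intended approach: the corollary is stated in the paper without proof, precisely because it is the $A=B$ specialization of Theorem \ref{frame1}, and your elementary observation translating uniform convergence into the $\limsup\sup/\liminf\inf$ pair is all that is needed to make this explicit. The compactness point you flag is already absorbed into the proof of Theorem \ref{frame1} itself (via Lemma \ref{per}(a)), so no additional argument is required for the corollary.
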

If $G$ is a subgroup of $\mathbb{R}$, we will denote by $\Pi_G$ the set of trigonometric polynomials
$P(\lambda)$ on $\mathbb{R}$
with spectrum in $G$, i.e. those of the form
$$
P(\lambda)=\sum_{i=1}^m\,c_i\,e^{-2\pi i x_i \lambda},\quad x_i\in G,\,\,c_i\in \mathbb{C}.
$$
The mean of $P$ is defined to be
$$
\mathcal{M}(P)=\lim_{R\to \infty}\,\frac{1}{R}\int_{[-R/2, R/2]}\,P(\lambda)\,d\lambda.
$$
Note that $\mathcal{M}(|P|^2)|=\sum_{i=1}^m\,|c_i|^2$, if $P(\lambda)$ is as above.
Combining these results with those of the previous sections,
we obtain the following characterizations.

\begin{theorem}\label{main2}
Let $a_1,\dots, a_s$ be $s$ positive real numbers linearly independent over $\mathbb{Q}$, with $s\ge 1$,
and let $G$ be the subgroup of $\mathbb{R}$ generated by $a_1,\dots, a_s$, i.e.
$$
G=\left\{\sum_{i=1}^s\,m_i\,a_i,\,\,m_i\in \mathbb{Z}\right\}.
$$
Let $\mu$ be a positive translation-bounded Borel measure on $\mathbb{R}$.
If $I_k$ are intervals with $I_k\subset [0,1/a_k)$, $k=1,\dots s$,
let $E(t, R, I_1,\dots,I_s)$ denote the set
$$
\left\{\lambda\in \mathbb{R}:\,\,
t\le \lambda\le t+R, \lambda\,({\rm mod}\, a_1^{-1})\in I_1,\dots, \lambda\,({\rm mod}\, a_1^{-1})\in I_s\right\}.
$$
\begin{enumerate}[(a)]
\item The measure $\mu\in \mathcal{B}(G,B)$ if and only if
any of the two following statement holds:
\begin{enumerate}[(i)]
\item For any intervals $I_k\subset [0,1/a_k)$, $k=1,\dots s$,
we have
$$
\limsup_{R\to \infty}\,\sup_{t\in \mathbb{R}}\,\frac{1}{R\,a_1\dots a_s}\,
\mu\left(E(t, R, I_1,\dots,I_s)\right)
\le |I_1|\,\dots |I_s|\,B.
$$
\item  We have
$$
\mathcal{D}^+(|P|^2\,\mu)\le B\,\mathcal{M}(|P|^2),\quad P\in \Pi_G.
$$
\end{enumerate}
\item The measure $\mu\in \mathcal{F}(G,A,B)$ if and only if
any of the two following statement holds:
\begin{enumerate}[(i)]
\item For any intervals $I_k\subset [0,1/a_k)$, $k=1,\dots s$,
we have
$$
\liminf_{R\to \infty}\,\inf_{t\in \mathbb{R}}\,\frac{1}{R\,a_1\dots a_s}\,
\mu\left(E(t, R, I_1,\dots,I_s)\right)
\ge |I_1|\,\dots |I_s|\,A.
$$
and
$$
\limsup_{R\to \infty}\,\sup_{t\in \mathbb{R}}\,\frac{1}{R\,a_1\dots a_s}\,
\mu\left(E(t, R, I_1,\dots,I_s)\right)
\le |I_1|\,\dots |I_s|\,B.
$$
\item  We have
$$
A\,\mathcal{M}(|P|^2)\le \mathcal{D}^-(|P|^2\,\mu)\le
\mathcal{D}^+(|P|^2\,\mu)\le B\,\mathcal{M}(|P|^2),\quad P\in \Pi_G.
$$
\end{enumerate}
\item The measure $\mu\in \mathcal{F}(G,A,A)$ if and only if
any of the two following statement holds:
\begin{enumerate}[(i)]
\item For any intervals $I_k\subset [0,1/a_k)$, $k=1,\dots s$,
we have
$$
\lim_{R\to \infty}\,\frac{1}{R\,a_1\dots a_s}\,
\mu\left(E(t, R, I_1,\dots,I_s)\right)
= |I_1|\,\dots |I_s|\,A.
$$
uniformly for $t\in \mathbb{R}$.
\item We have
$$
\mathcal{D}(|P|^2\,\mu)=A\,\mathcal{M}(|P|^2),\quad P\in \Pi_G.
$$
\end{enumerate}
\end{enumerate}
\end{theorem}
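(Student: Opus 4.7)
The plan is to assemble the three equivalences in Theorem \ref{main2} by stringing together the three preceding results: Theorem \ref{th1.2} (Bessel version of the matrix characterization), Theorem \ref{th1.1} (frame version), the final Remark of Section 3 (which reinterprets $\mathcal{D}^\pm_N(\check\mu)$ as upper/lower Beurling densities of $|P|^2\mu$ for trigonometric polynomials $P\in\Pi_G$), and Theorem \ref{frame1}/Corollary \ref{tightframe} (which further converts those bounds into the equidistribution condition on the sets $E(t,R,I_1,\dots,I_s)$). Throughout, membership of $\mu$ in $\mathcal{B}(G,B)$ or $\mathcal{F}(G,A,B)$ is a condition that must hold for \emph{every} finite subset $\{x_1,\dots,x_N\}\subset G$, so in each step the quantifier ``for all finite subsets of $G$'' must be carried along carefully.

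For part (a), I would proceed as follows. First, by unfolding Definition \ref{def3}, $\mu\in\mathcal{B}(G,B)$ is equivalent to the statement that for every finite set $\{x_1,\dots,x_N\}\subset G$ and every $\delta>0$ there exists $\epsilon>0$ such that the Bessel inequality (\ref{bessel2}) holds on $\Omega=\bigcup_j(x_j+Q_\epsilon)$ with constant $B+\delta$. Applying Theorem \ref{th1.2} to this inequality for each fixed $N$-tuple shows this is equivalent to $\mathcal{D}^+_N(\check\mu)\le B+\delta$ for every $\delta>0$, i.e.\ $\mathcal{D}^+_N(\check\mu)\le B$, with the convention (\ref{matrix-def}) for $\check\mu$ in terms of $x_1,\dots,x_N$. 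Using (\ref{eq3.2}) together with the Remark at the end of Section 3, this in turn translates into
\[
\limsup_{h\to\infty}\sup_{t\in\mathbb{R}}\frac{1}{h}\int_{t+Q_h}|P(\xi)|^2\,d\mu(\xi)\le B\sum_{i=1}^N|a_i|^2
\]
for every trigonometric polynomial $P(\lambda)=\sum_{i=1}^N a_i e^{-2\pi i x_i\lambda}$ with $x_i$ in the chosen finite subset. Since $\sum|a_i|^2=\mathcal{M}(|P|^2)$ (because the $x_i$ are distinct) and the left-hand side is exactly $\mathcal{D}^+(|P|^2\mu)$, varying over all finite subsets of $G$ yields (ii). Finally, observing that the previous inequality is precisely condition (a) of Theorem \ref{frame1} (in the version with $A=0$), the equivalence (a)$\Leftrightarrow$(c) of that theorem delivers (i).

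Part (b) follows by the same chain, applying now Theorem \ref{th1.1} instead of Theorem \ref{th1.2} to obtain two-sided matrix-density bounds $A\le\mathcal{D}^-_N(\check\mu)\le\mathcal{D}^+_N(\check\mu)\le B$ for all finite subsets of $G$; the Remark of Section 3 gives both the upper and lower forms of (ii), and the full (bidirectional) Theorem \ref{frame1} translates these to the two inequalities in (i). For part (c) the argument is the same with $A=B$, except that one invokes Corollary \ref{tightframe} in place of Theorem \ref{frame1}; the ``$\lim$ uniform in $t$'' (instead of separate $\liminf$/$\limsup$) and the single density $\mathcal{D}(|P|^2\mu)=A\,\mathcal{M}(|P|^2)$ come directly from collapsing the two one-sided statements.

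The main technical point, rather than a true obstacle, is bookkeeping of the quantifier ``for all finite subsets of $G$,'' since the matrix density $\mathcal{D}^\pm_N(\check\mu)$ and the trigonometric polynomial $P$ change simultaneously with the chosen finite subset; this is why the equivalence must be phrased with $P\in\Pi_G$ ranging over \emph{all} polynomials with spectrum in $G$, not a fixed finite set. A secondary subtlety is matching the ``$\delta$ slack'' in Definition \ref{def3} with the strict inequality in the matrix-density conclusions of Theorems \ref{th1.1}/\ref{th1.2}: this is handled by noting that $\mathcal{D}^+_N(\check\mu)\le B+\delta$ for every $\delta>0$ is the same as $\mathcal{D}^+_N(\check\mu)\le B$, and symmetrically for the lower bound. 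Once these two points are handled, the proof is essentially a clean composition of previously established equivalences.
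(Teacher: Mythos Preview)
Your proposal is correct and follows exactly the same approach as the paper's own proof, which simply states that (a) and (b) follow from the equivalence of conditions (a) and (c) in Theorem~\ref{frame1} together with Theorems~\ref{th1.2} and~\ref{th1.1}, and that (c) follows from Corollary~\ref{tightframe} and the case $A=B$ of Theorem~\ref{th1.1}. Your write-up is more detailed in handling the quantifier over finite subsets of $G$ and the $\delta$-slack from Definition~\ref{def3}, but structurally it is the same composition of prior results (one minor notational slip: you reuse $a_i$ for the polynomial coefficients, clashing with the generators $a_1,\dots,a_s$ of $G$).
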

\begin{proof} The proof of (a) and (b) follow immediately from the equivalence of conditions
(a) and (c) in Theorem \ref{frame1} together with Theorem \ref{th1.2} and Theorem \ref{th1.1},
respectively. The statement in (c) is an immediate consequence of Corollary \ref{tightframe}
and the case $A=B$ of Theorem \ref{th1.1}.
\end{proof}
In the case where $G$ is the discrete subgroup $G=a\,\mathbb{Z}$, with $a>0$,
the conditions
given in the previous theorem, more particularly the ones given in statement (c), are
strongly related to the notions of ``equidistributed'' sequence or, more specifically, to that
of ``well-distributed'' sequence of real numbers. We will deal with bi-infinite sequences,
so the definition given below is slightly different than the classical one dealing with one-sided
sequences (see \cite{KN}).
\begin{definition}
If $b>0$,  we call
a sequence of real numbers $\{\lambda_n\}_{n\in \mathbb{Z}}$ well-distributed modulo $b$
if, for every
interval $I\subset [0,b)$ and every integer $M\in \mathbb{Z}$,
$$
\lim_{N\to \infty}\,\frac{\#\{n,\,\,\lambda_n\,({\rm mod}\, b)\in I,\,\,M\le n\le M+N-1\}}{N}=|I|/b
$$
 uniformly for $M\in \mathbb{Z}$.
\end{definition}
The condition (c) of Theorem \ref{main2} can be rephrased using the notion of well-distributed
sequences when dealing with measures of the form $\mu=\delta_\Lambda$, where $\Lambda$ is a discrete
subset of $\mathbb{R}$ (i.e. the intersection of $\Lambda$ with any compact set is finite).
 \begin{corollary}\label{well-dist}
Let $G= a\,\mathbb{Z}$ where $a>0$. Suppose that $\Lambda$ is a discrete subset of $\mathbb{R}$
and consider a sequence $\{\lambda_n\}_{n\in \mathbb{Z}}$ enumerating the elements of $\Lambda$ in such a way
that
$$
\lambda_n< \lambda_{n+1},\quad n\ge 1.
$$
Then, $\delta_\Lambda\in \mathcal{F}(G,A,A)$ if and only if
\begin{enumerate}[(i)]
\item
$\mathcal{D}(\Lambda)=A$.
\item
The sequence   $\{\lambda_n\}_{n\in \mathbb{Z}}$ is well-distributed modulo $a^{-1}$.
\end{enumerate}
\end{corollary}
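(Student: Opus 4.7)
The plan is to invoke Theorem \ref{main2}(c) specialized to $s = 1$ and $a_1 = a$: this characterizes $\delta_\Lambda \in \mathcal{F}(G,A,A)$ by the condition that, for every interval $I \subset [0, a^{-1})$,
$$
\lim_{R\to\infty}\,\frac{\#\{\lambda\in \Lambda\cap[t,t+R] : \lambda\,(\mathrm{mod}\,a^{-1}) \in I\}}{R\,a} = A\,|I|
$$
uniformly in $t\in\mathbb{R}$. Call this condition $(\ast)$. The corollary then reduces to showing $(\ast) \Longleftrightarrow$ (i)$\,+\,$(ii). The common bridge between the two formulations is the observation that, for $t\in\mathbb{R}$ and $R>0$, setting $n_1 := \min\{n : \lambda_n \ge t\}$ and $n_2 := \max\{n : \lambda_n \le t+R\}$, the numerator of $(\ast)$ equals $\#\{n_1 \le n \le n_2 : \lambda_n\,(\mathrm{mod}\,a^{-1}) \in I\}$, while $n_2-n_1+1 = \#(\Lambda\cap[t,t+R])$.

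For the direction $(\ast) \Longrightarrow$ (i)$\,+\,$(ii), I would first specialize $(\ast)$ to $I = [0,a^{-1})$; this yields $\#(\Lambda\cap[t,t+R])/R \to A$ uniformly in $t$, which is exactly $\mathcal{D}^+(\Lambda) = \mathcal{D}^-(\Lambda) = A$, i.e., (i). For (ii), fix $M \in \mathbb{Z}$ and $N$ large, and take $t = \lambda_M$, $R = \lambda_{M+N-1} - \lambda_M$; then $\Lambda\cap[t,t+R] = \{\lambda_M, \ldots, \lambda_{M+N-1}\}$ has cardinality exactly $N$, and (i) forces $R = N/A\cdot(1+o(1))$ uniformly in $M$. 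Applying $(\ast)$ to this pair $(t,R)$ and dividing through by $N$ converts the limit $A|I|$ into $A|I| \cdot (Ra/N) \to |I|/a^{-1}$, which is the well-distributed limit. Uniformity in $t$ in $(\ast)$ translates to uniformity in $M$ via the correspondence $M \mapsto \lambda_M$.

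For the converse (i)$\,+\,$(ii) $\Longrightarrow (\ast)$, given $I\subset[0,a^{-1})$, $t\in\mathbb{R}$, and $R$ large, I would take $n_1, n_2$ as above. By (i), $n_2 - n_1 + 1 = R\,A\,(1+o(1))$ uniformly in $t$. Applying (ii) with starting index $M = n_1$ and window length $N = n_2 - n_1 + 1$ gives
$$
\#\{n_1 \le n \le n_2 : \lambda_n\,(\mathrm{mod}\,a^{-1}) \in I\} = (n_2-n_1+1)\cdot a|I|\cdot(1+o(1))
$$
uniformly in $t$; dividing by $R\,a$ yields $A|I|(1+o(1))$, which is precisely $(\ast)$.

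The principal obstacle will be the uniformity bookkeeping: each $o(1)$ error must be shown independent of the parameter held free ($t$ in $(\ast)$, $M$ in (ii)). This is manageable because (i) is uniform in $t$ by the very definition of Beurling density and (ii) is uniform in $M$ by definition, while the correspondence $t \leftrightarrow n_1(t,R)$ (respectively $M \mapsto \lambda_M$) transports one uniformity to the other compatibly. A minor technical point is the need to invert the uniform counting limit $\#(\Lambda\cap[t,t+R])/R \to A$ to obtain the dual uniform asymptotic $\lambda_{M+N-1} - \lambda_M = N/A\,(1+o(1))$, but this is immediate from the two-sided uniform bounds $(A-\varepsilon)R \le \#(\Lambda\cap[t,t+R]) \le (A+\varepsilon)R$ for large $R$.
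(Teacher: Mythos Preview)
Your proposal is correct and follows essentially the same route as the paper: both reduce the statement to condition $(\ast)$ via Theorem~\ref{main2}(c), obtain (i) by specializing to $I=[0,a^{-1})$, and pass between the $(t,R)$-window and the $(M,N)$-window using (i) to match the two counting ratios. The paper makes explicit one point you fold into ``uniformity bookkeeping'': that $\lambda_{M+N-1}-\lambda_M\to\infty$ uniformly in $M$ as $N\to\infty$ (needed so that applying $(\ast)$ with $R=\lambda_{M+N-1}-\lambda_M$ is legitimate uniformly), arguing by contradiction with $\mathcal{D}^+(\Lambda)<\infty$.
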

\begin{proof} Using part (c) of Theorem \ref{main2}, we easily see that
$\delta_\Lambda\in \mathcal{F}(G,A,A)$ if and only if, for any
interval $I\subset [0,a^{-1})$, we have
\begin{equation}\label{lim1}
\lim_{R\to \infty}\,\frac{1}{R}\,
\#\left\{\lambda\in \Lambda,\,\,t\le \lambda\le t+R,\,\,\lambda\,({\rm mod}\, a^{-1})\in I \right\}
= a\,|I|\,A,
\end{equation}
uniformly for $t\in \mathbb{R}$.
In particular, if (i) holds, we obtain, taking $I=[0,a^{-1})$, that $\mathcal{D}(\Lambda)=A$.

If $M\in \mathbb{Z}$, the set $A_{M,N}:=\{\lambda_n,\,\,M\le n\le M+N-1\}
=[\lambda_M,\lambda_{M+N-1}] \cap \Lambda$.
Note that for each $M$, $\lambda_{M+N-1}-\lambda_M\to \infty$ as $N\to \infty$,
uniformly for $M\in \mathbb{Z}$. Otherwise, we could find a number $L>0$
and intervals $I_N$ of length
bounded by $L$ containing at least $N$ elements of $\Lambda$, for any $N\ge 1$,
which would imply that $\mathcal{D}^+(\Lambda)=\infty$.
Hence, for any
interval $I\subset [0,a^{-1})$, we have
\begin{align*}
&\frac{\#\{n,\,\,\lambda_n\,({\rm mod}\,a^{-1})\in I,\,\,M\le n\le M+N\}}{N}\\
&=\frac{\#\{\lambda\in \Lambda,\,\,\lambda_M\le \lambda\le\lambda_{M+N-1},
\,\,\lambda\,({\rm mod}\,a^{-1})\in I \}}{\lambda_{M+N-1}-\lambda_M}\,
\left(\frac{\lambda_{M+N-1}-\lambda_M}{\#A_{M,N}}\right)
\end{align*}
Since $\mathcal{D}(\Lambda)=A$, we have $\lim_{N\to \infty}\,\frac{\#A_{M,N}}{\lambda_{M+N-1}-\lambda_M}=A$,
uniformly for $M\in \mathbb{Z}$.
Furthermore, using (\ref{lim1}), we obtain
$$
\lim_{N\to \infty}\,\frac{\#\{\lambda\in \Lambda,\,\,\lambda_M\le \lambda\le\lambda_{M+N-1},
\,\,\lambda\,({\rm mod}\,a^{-1})\in I \}}{\lambda_{M+N-1}-\lambda_M}
= a\,|I|\,A,
$$
uniformly for $M\in \mathbb{Z}$
as $N\to \infty$. We deduce that
$$
\lim_{N\to \infty}\,\frac{\#\{n,\,\,\lambda_n\,({\rm mod}\,a^{-1})\in I,\,\,M\le n\le M+N\}}{N}= a\,|I|,
$$
uniformly for $M\in \mathbb{Z}$, proving (ii). Conversely, if (i) and (ii) hold,
we have, for any $t\in \mathbb{R}$ and any $R>0$ large enough, that
$$
\left\{\lambda\in \Lambda,\,\,t\le \lambda\le t+R \right\}=
\{\lambda_n,\,\,M(t,R)\le n\le M(t,R)+N(t,R)-1\},
$$
for some $M(t,R)\in \mathbb{Z}$ and $N(t,R)\ge 1$. Furthermore, using (i), we have
$$
N(t,R)/R\to A, \quad R\to \infty,
$$
uniformly for $t\in \mathbb{R}$
and using (ii), we have
$$
\lim_{R\to \infty}\,\frac{1}{N(t,R)}\,
\#\left\{\lambda_n,\,\,M(t,R)\le n\le M(t,R)+N(t,R)-1,\,\,\lambda_n\,({\rm mod}\,a^{-1})\in I \right\}=|I|\,a,
$$
uniformly for $t\in \mathbb{R}$.
 Hence, for any
interval $I\subset [0,a^{-1})$, we have
\begin{align*}
&\lim_{R\to \infty}\,\frac{1}{R}\,
\#\left\{\lambda\in \Lambda,\,\,t\le \lambda\le t+R,\,\,\lambda\,({\rm mod}\,a^{-1})\in I \right\}\\
&= \lim_{R\to \infty}\,\frac{1}{R}\,
\#\left\{\lambda_n,\,\,M(t,R)\le n\le M(t,R)+N(t,R)-1,\,\,\lambda_n\,({\rm mod}\,a^{-1})\in I \right\}
&=|I|\,a\,A,
\end{align*}
showing that (\ref{lim1}) holds and thus that $\delta_\Lambda\in \mathcal{F}(G,A,A)$.
\end{proof}
Note that the condition (i) in the previous result is essential. For example, we can easily
construct a discrete set $\Lambda$ with $\mathcal{D}^+(\Lambda)=0$ such that the associated sequence
$\{\lambda_n\}$ defined in the previous corollary is well-distributed modulo $1$.

When the group $G$ is generated by at least two linearly independent (over $\mathbb{Q}$)
elements, it must be dense in  $\mathbb{R}$ and the conditions given in Theorem \ref{main2}
become more difficult to satisfy.
However, it is easy to check that the statement (a) in Corollary \ref{tightframe}
holds for any finitely generated subgroup $G$ if $d\mu=d\lambda$, the Lebesgue measure on $\mathbb{R}$.
It follows therefore that if $a_1,\dots, a_s$ are real numbers linearly
independent over $\mathbb{Q}$ and  $I_1,\dots,I_s$ are intervals with $I_j\subset [0,1/a_j)$,
$j=1,\dots,s$, then
$$
\lim_{R\to \infty}\,\frac{1}{R\,a_1\dots a_s}\,
|E(t, R, I_1,\dots,I_s)|
=|I_1|\,\dots |I_s|
$$
uniformly for $t\in \mathbb{R}$.
This can be interpreted, in the language of probability
theory, as saying that the events of
belonging to the intervals
 $I_j$ modulo $1/a_j$, $j=1,\dots,s$, are asymptotically independent.
For any finitely generated subgroup $G$, one can also construct discrete measures
in $\mathcal{F}(G,A,A)$. In fact, lattices will yield such measures
as long as $A$ does not belong to the $\mathbb{Q}$-linear span of $G$.
\begin{proposition}\label{lattice}
Let $a_1,\dots, a_s$ be $s$ positive real numbers linearly independent over $\mathbb{Q}$, with $s\ge 1$,
and let $G$ be the subgroup of $\mathbb{R}$ generated by $a_1,\dots, a_s$, i.e.
$$
G=\left\{\sum_{i=1}^s\,m_i\,a_i,\,\,m_i\in \mathbb{Z}\right\}.
$$
Let $b>0$ and  $\mu=\delta_\Lambda$, where $\Lambda=b\,\mathbb{Z}$.
Then, there exist constants $A,B>0$ such that $\mu\in \mathcal{F}(G,A,B)$
if and only if $1/b \notin \text{span}_{\mathbb{Q}}(a_1,\dots, a_m)$
and, in that case, we can take $A=B=1/b$.
\end{proposition}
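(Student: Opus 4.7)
The plan is to reduce the claim to a Weyl-type equidistribution question for the Kronecker sequence $n \mapsto (a_1 bn, \dots, a_s bn) \bmod 1$ on $\mathbb{T}^s$, then invoke Theorem \ref{main2}(c) for the ``if'' direction and Theorem \ref{main2}(b) for the converse. The starting point is the identity
\[
\mu(E(t, R, I_1, \dots, I_s)) = \#\{n \in \mathbb{Z} : bn \in [t, t+R], \ bn \bmod a_j^{-1} \in I_j \ \forall j\},
\]
and since $a_j(bn \bmod a_j^{-1}) = a_j bn \bmod 1$, the modular conditions are equivalent to $(a_1 bn, \dots, a_s bn) \bmod 1 \in a_1 I_1 \times \cdots \times a_s I_s \subset [0,1)^s$. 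Moreover the number of integers $n$ with $bn \in [t, t+R]$ equals $R/b + O(1)$, uniformly in $t$.

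For the ``if'' direction, I would first observe that $1/b \notin \mathrm{span}_{\mathbb{Q}}(a_1,\dots,a_s)$ is equivalent, via an easy rearrangement exploiting the $\mathbb{Q}$-linear independence of the $a_j$, to $\{1, a_1 b, \dots, a_s b\}$ being $\mathbb{Q}$-linearly independent. By Weyl's criterion this forces equidistribution of the Kronecker sequence on $\mathbb{T}^s$, and unique ergodicity of the corresponding torus translation upgrades this to well-distribution, uniform in the starting index. Combined with the count above, this yields
\[
\frac{1}{R\,a_1 \cdots a_s}\,\mu(E(t, R, I_1, \dots, I_s)) \longrightarrow \frac{|I_1| \cdots |I_s|}{b}
\]
uniformly in $t$, so Theorem \ref{main2}(c) gives $\mu \in \mathcal{F}(G, 1/b, 1/b)$.

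For the converse, if $1/b \in \mathrm{span}_{\mathbb{Q}}(a_j)$, clearing denominators produces integers $m_1, \dots, m_s$, not all zero, and a nonzero integer $N$ with $\sum_j m_j a_j b = N$ (the nonvanishing of $N$ uses the $\mathbb{Q}$-independence of the $a_j$). The Kronecker orbit then lies inside the proper closed subgroup $H = \{x \in \mathbb{T}^s : \sum_j m_j x_j \in \mathbb{Z}\}$, which has Lebesgue measure zero. I would then pick a point $x^{\ast} \in [0,1)^s \setminus H$ and a sufficiently small product of open intervals $J_1 \times \cdots \times J_s$ around $x^{\ast}$, contained in $[0,1)^s$ without wrap-around and disjoint from $H$. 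Setting $I_j := J_j/a_j \subset [0, a_j^{-1})$, the modular conditions can never be satisfied, so $\mu(E(t,R,I_1,\dots,I_s)) = 0$ identically in $t$ and $R$. Theorem \ref{main2}(b)(i) then forces the lower constant to vanish, excluding $\mu$ from $\mathcal{F}(G,A,B)$ for any $A > 0$.

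The main obstacle is the passage from plain equidistribution to well-distribution (uniform over starting indices), as required by the $\sup_t$ and $\inf_t$ in Theorem \ref{main2}. Unique ergodicity of minimal torus translations delivers uniform convergence for continuous observables; lifting this to characteristic functions of product-of-intervals sets is handled by a routine sandwich with continuous approximations, in the spirit of the arguments already used in Section 4. A minor secondary point is ensuring that the intervals $J_j$ in the converse can be chosen without wrap-around, so that the rescaled $I_j = J_j/a_j$ are genuine intervals in $[0, a_j^{-1})$ as demanded by the statement of Theorem \ref{main2}; this is ensured by choosing $x^{\ast}$ in the interior of $[0,1)^s$ and the $J_j$ small enough.
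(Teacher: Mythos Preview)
Your argument is correct, but it takes a different route from the paper. The paper works directly with characterization (ii) of Theorem~\ref{main2} (the trigonometric--polynomial/density form), while you work with characterization (i) (the sets $E(t,R,I_1,\dots,I_s)$) and bring in Weyl equidistribution and unique ergodicity of torus translations. Concretely, for the ``if'' direction the paper simply observes that $1/b\notin\mathrm{span}_{\mathbb Q}(a_1,\dots,a_s)$ forces $(x_j-x_l)b\notin\mathbb Z$ whenever $x_j\neq x_l$ in $G$, and then the cross terms in
\[
\frac{1}{R}\sum_{t/b\le k\le (t+R)/b} e^{-2\pi i (x_j-x_l)kb}
\]
are bounded by $C/R$ uniformly in $t$ via the geometric--sum estimate, yielding the limit $b^{-1}\sum|c_j|^2$ and hence $\mu\in\mathcal F(G,1/b,1/b)$ immediately. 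Your route recovers the same conclusion via the equidistribution of the Kronecker sequence, which is heavier machinery but gives a conceptual picture of why the lattice works. For the converse, the paper again argues directly with exponentials: it picks $x_k=kl b^{-1}\in G$ (possible since some multiple $l b^{-1}$ lies in $G$), takes $c_k\equiv 1$, and observes that on $\Lambda=b\mathbb Z$ the polynomial collapses to $(M+1)^2$, so the \emph{upper} bound blows up and in fact $\mu\notin\mathcal B(G,B)$ for any $B$. Your converse instead shows the \emph{lower} bound fails by exhibiting a product of intervals missed by the orbit; this is equally valid for the proposition as stated, though the paper's conclusion is formally stronger.
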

\begin{proof} If $b^{-1}\in \text{span}_{\mathbb{Q}}(a_1,\dots, a_m)$, there exists an integer $l\ge 1$ such that
$l\,b^{-1}\in G$. Hence $k\,l\,b^{-1}\in G$ for any integer $k$, and if $c_0,\dots,c_M\in \mathbb{C}$, we have
$$
\frac{1}{R}\,\int_{[t,t+R]}\,
\left|\sum_{k=0}^M\,c_k\,e^{-2\pi i kl  b^{-1}\lambda}\right|^2\,d\mu(\lambda)=
\big|\sum_{k=0}^M\,c_k\big|^2\,\mu([t,t+R])/R\to 1/b\, \big|\sum_{k=0}^M\,c_k\big|^2
$$
uniformly for $t\in \mathbb{R}$ as $R\to \infty$.
Taking $c_k=1$, for all $k$, yields
$$|\sum_{k=0}^M\,c_k\big|^2=(M+1)\,\sum_{k=0}^M\,|c_k|^2,
$$
which shows that $\mu\notin \mathcal{B}(G,B)$ for any $B>0$.

If $1/b \notin \text{span}_{\mathbb{Q}}(a_1,\dots, a_m)$, then $x\,k\,b\notin \mathbb{Z}$
for any $k\in \mathbb{Z}\setminus \{0\}$ and any $x\in G\setminus \{0\}$.
Hence if $x_1,\dots,x_m$ are distinct elements of $G$ and $c_1,\dots,c_m\in  \mathbb{C}$, we have
$$
\frac{1}{R}\,\int_{[t,t+R]}\,
\left|\sum_{j=1}^m\,c_j\,e^{-2\pi i x_j\lambda}\right|^2\,d\mu(\lambda)=
\sum_{j,l=1}^m\,c_j\,\overline{c_l}\,\frac{1}{R}\,\sum_{t/b\le k\le (t+R)/b}
\,e^{-2\pi i (x_j-x_l) k b}\to \frac{1}{b}\,\sum_{j=1}^m\,|c_j|^2,
$$
uniformly for $t\in \mathbb{R}$, as $R\to \infty$, showing that $\mu\in \mathcal{F}(G,1/b,1/b)$.
\end{proof}
Recall (see \cite{Ka, Me1}) that
a function $F$ defined on the real line is called (Bohr) \emph{almost-periodic} if
it is continuous and
for every $\epsilon>0$
there exists a number $\Lambda=\Lambda(\epsilon,F)>0$ such that every interval of length
$\Lambda$ contains a number $\tau$ such that
\begin{equation}\label{almost-period}
\sup_{x\in \mathbb{R}}\,|F(x-\tau)-F(x)|<\epsilon.
\end{equation}
A number  $\tau$ such that (\ref{almost-period}) holds is called an {\it $\epsilon$-almost period}
of $F$. The space of almost-periodic functions on $\mathbb{R}$ can be characterized as
the sup-norm closure of the space of trigonometric polynomials $P(\lambda)$
associated with arbitrary real frequencies, i.e. functions of the form
$$
P(\lambda)=\sum_{i=1}^m\,c_i\,e^{-2\pi i x_i\,\lambda},\quad c_i\in \mathbb{C},\,\,x_i\in \mathbb{R}.
$$
If  $F(\lambda)$ is almost-periodic, the \emph{mean-value} of $F$, $\mathcal{M}(F)$, defined by
$$
\mathcal{M}(F)=\lim_{R\to \infty}\,\frac{1}{R}\int_{[-R/2, R/2]}\,F(\lambda)\,d\lambda
$$
exists and the spectrum of $F$ consists of all the real numbers $x$ such that
$$
\mathcal{M}(F(\lambda)\,e^{-2\pi i x\,\lambda})\ne 0.
$$
If the spectrum of $F$ is contained in the subgroup $G$, then $F$ can be uniformly
approximated arbitrary closely by trigonometric polynomials with spectrum in $G$ (see \cite{Me1}).
Note that if $F(\lambda)$ is almost-periodic and a trigonometric polynomial $P(\lambda)$
satisfies $\|F-P\|_\infty\le \epsilon$, then for any positive translation-bounded Borel measure $\mu$
on $\mathbb{R}$, we have
$$
\frac{1}{R}\int_{[t, t+R]}\,
\big|F(\lambda)-P(\lambda)\big|^2\,d\mu(\lambda)\le \epsilon^2\,\frac{1}{R}\,\mu\left([t, t+R]\right),\quad
t\in \mathbb{R},\,\,R>0,
$$
and, in particular,
$$
\limsup_{R\to \infty}\,\sup_{t\in \mathbb{R}}\,\frac{1}{R}\int_{[t, t+R]}\,
\big|F(\lambda)-P(\lambda)\big|^2\,d\mu(\lambda)\le \epsilon^2\,\mathcal{D}^+(\mu).
$$
It follows immediately that any of the inequalities satisfied by the class of trigonometric
polynomials with spectrum in the subgroup $G$ and used to characterize
the measures in $\mathcal{B}(G,B)$
or $\mathcal{F}(G,A,B)$ must also be satisfied by the functions in $AP(G)$, the collection of
almost-periodic functions on $\mathbb{R}$ having spectrum contained in $G$.
We have thus the following.
\begin{theorem}\label{a-p} Let $G$ be a subgroup of $\mathbb{R}$ and let $\mu$ be a positive Borel
measure on $\mathbb{R}$. Then,
\begin{enumerate}[(a)]
\item $\mu$ belongs to $\mathcal{B}(G,B)$ if and only if
$$
\mathcal{D^+}\left(|F|^2\,\mu\right)\le B\,\mathcal{M}(|F|^2),\quad F\in AP(G).
$$
\item $\mu$ belongs to $\mathcal{F}(G,A, B)$ if and only if
$$
A\,\mathcal{M}(|F|^2)\le \mathcal{D^-}\left(|F|^2\,\mu\right)
\le \mathcal{D^+}\left(|F|^2\,\mu\right)\le B\,\mathcal{M}(|F|^2),\quad F\in AP(G).
$$
\item $\mu$ belongs to $\mathcal{F}(G,A,A)$ if and only if
$$
\mathcal{D}\left(|F|^2\,\mu\right)=A\,\mathcal{M}(|F|^2),\quad F\in AP(G).
$$
\end{enumerate}

\end{theorem}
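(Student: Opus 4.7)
My strategy will be to combine Theorem~\ref{main2} with the sup-norm density of $\Pi_G$ in $AP(G)$ recalled just before the theorem. First I would reduce to the case where $G$ is finitely generated. Indeed, membership in any of $\mathcal{B}(G,B)$, $\mathcal{F}(G,A,B)$, $\mathcal{F}(G,A,A)$ is defined by a condition on each finite subset of $G$ (Definition~\ref{def3}), so $\mu$ lies in the class iff it does for every finitely generated subgroup $G' \subseteq G$. Dually, any $F \in AP(G)$ has countable spectrum, hence $F \in AP(G')$ for some finitely generated $G' \subseteq G$. Both sides of each equivalence therefore localize to finitely generated subgroups, and I may assume $G$ is finitely generated, so that Theorem~\ref{main2} applies.

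With that reduction, Theorem~\ref{main2}(a)(ii), (b)(ii), (c)(ii) give the desired equivalences with $AP(G)$ replaced by $\Pi_G$. The reverse implications in (a), (b), (c) are then immediate since $\Pi_G \subseteq AP(G)$. For the forward implications, fix $F \in AP(G)$ and $\epsilon > 0$, and pick $P \in \Pi_G$ with $\|F-P\|_\infty \le \epsilon$. The hypothesis applied to the constant $F \equiv 1$ gives $\mathcal{D}^+(\mu) \le B$, so $\mu$ is translation-bounded. Combined with $|F-P|^2 \le \epsilon^2$, this yields
\[
\mathcal{D}^+(|F-P|^2\,\mu) \le \epsilon^2\,\mathcal{D}^+(\mu) \le \epsilon^2 B, \qquad \mathcal{M}(|F-P|^2) \le \epsilon^2.
\]
Using the elementary pointwise inequalities $|F|^2 \le (1+\eta)|P|^2 + (1+\eta^{-1})|F-P|^2$ and $|P|^2 \le (1+\eta)|F|^2 + (1+\eta^{-1})|F-P|^2$ (valid for any $\eta > 0$), integrating against $\mu$ on cubes $x+Q_h$, dividing by $h$, and taking $\limsup\sup$ or $\liminf\inf$ as appropriate, I obtain
\[
\mathcal{D}^+(|F|^2\mu) \le (1+\eta)\,\mathcal{D}^+(|P|^2\mu) + (1+\eta^{-1})\epsilon^2 B,
\]
\[
\mathcal{D}^-(|F|^2\mu) \ge (1+\eta)^{-1}\,\mathcal{D}^-(|P|^2\mu) - (1+\eta^{-1})(1+\eta)^{-1}\epsilon^2 B.
\]
Applying the polynomial bounds $\mathcal{D}^+(|P|^2\mu) \le B\,\mathcal{M}(|P|^2)$ and $\mathcal{D}^-(|P|^2\mu) \ge A\,\mathcal{M}(|P|^2)$ supplied by Theorem~\ref{main2}, then letting $\epsilon \to 0$ (so that $\mathcal{M}(|P|^2) \to \mathcal{M}(|F|^2)$), and finally $\eta \to 0$, yields the desired density inequalities for $F$. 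Part (c) is the $A=B$ case of (b).

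The main obstacle is the ordering of limits in the two-parameter approximation: the term $(1+\eta^{-1})\epsilon^2$ explodes if $\eta$ is sent to $0$ before $\epsilon$. Keeping $\eta > 0$ fixed while sending $\epsilon \to 0$ first, and only afterwards taking $\eta \to 0$, is essential. The translation-boundedness bound $\mathcal{D}^+(\mu) \le B$ forced by the hypothesis is precisely what makes the error term $\epsilon^2 B$ (rather than $\epsilon^2 \mathcal{D}^+(\mu)$, with $\mathcal{D}^+(\mu)$ a priori possibly infinite) manageable in the limit, and this is what allows the approximation argument to transfer the density inequalities from $\Pi_G$ to all of $AP(G)$.
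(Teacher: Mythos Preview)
Your proposal is correct and follows the same approach as the paper, which in the paragraph immediately preceding the theorem simply records the estimate $\mathcal{D}^+(|F-P|^2\mu)\le \epsilon^2\,\mathcal{D}^+(\mu)$ and then asserts that the extension from $\Pi_G$ to $AP(G)$ ``follows immediately''; your $\eta$-parameter computation makes that implicit step explicit. One small slip to fix: a general $F\in AP(G)$ has countable (not finite) spectrum, so it need not lie in $AP(G')$ for any finitely generated $G'$; this does not affect your argument, since only the approximating polynomial $P$ must have spectrum in a finitely generated subgroup, and that is automatic.
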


\section{Existence of discrete measures in $\mathcal{F}(\mathbb{R},A,B)$}
As we mentioned earlier, the Lebesgue measure on $\mathbb{R}$, $d\mu=d\lambda$,
belongs to $\mathcal{F}(\mathbb{R},1,1)$. On the other hand, we do not know a single
explicit example of a discrete set $\Lambda\subset \mathbb{R}$ such that the associated
measure $\mu=\delta_\Lambda$ belongs to $\mathcal{F}(\mathbb{R},A,B)$ for some constants
$0<A\le B<\infty$. The fact that the construction of such a set should be extremely difficult
is pretty clear
by considering the conditions that the sets $E(t,R, I_1,\dots,I_s)$ need to satisfy
in part (b) of Theorem \ref{main2} and this for any choice of numbers $a_1,\dots, a_s$
linearly independent over $\mathbb{Q}$.
In this last section, our goal will be to prove the existence of such a set.

\medskip

It might seem, a priori, that simple quasicrystals could yield an answer to the problem above
as they have been shown to be universal sampling sets my B.~Matei and Y.~Meyer in \cite{MM}.
(See \cite{Me1,Me2,MM} for the definition and properties of quasicrystals.)
A set $\Lambda\subset \mathbb{R}^d$ is called a {\it universal sampling set} \cite{OU2,MM} if
$\mathcal{D}(\Lambda)$ exists and $\Lambda$ is a set of stable sampling
for any compact set $K\subset \mathbb{R}^d$
with $|K|<\mathcal{D}(\Lambda)$, which means, in the terminology used in this paper,
that $\delta_\Lambda$ is an $\mathcal{F}$-measure
for $L^2(K)$ if $|K|<\mathcal{D}(\Lambda)$. A universal sampling set $\Lambda$
will thus yield a frame for $L^2(K)$ where $K=\cup_{i=1}^N\,[x_i-\epsilon/2,x_i+\epsilon/2]$
for any $x_1,\dots,x_N\in \mathbb{R}$ if $\epsilon>0$ is small enough
and dependent on the $x_i$'s. However, the associated frame constants are dependent on the
points $x_i$'s as well and it might not be possible to find frame bounds compatible
with all the finite  subsets $X=\{x_i,\,\,i=1,\dots N\}$ of real numbers.
In fact, this will be the case for simple quasi-crystals
as the following result shows. The proof is based on an idea used by B.~Matei
to show us that simple quasicrystals cannot yield  frames for $L^2(F)$
if $F$ is unbounded (\cite{Ma}).
\begin{proposition}
Let $\Lambda\subset \mathbb{R}$ be a simple quasicrystal. Then,  for any $B>0$,
the measure $\delta_\Lambda$ cannot belong to
$\mathcal{B}(\mathbb{R}, B)$.
\begin{proof} The main ingredient of this proof is that any simple quasicrystal $\Lambda$
is an harmonious set (see \cite{Me1,Me2}), which implies the existence of a
sequence $\{x_j\}_{j\ge 1}$ of real numbers having the property that
$$
\sup_{\lambda \in \Lambda}\,|e^{-2\pi i \lambda x_j}-1|\to 0\quad \text{as}\,\,j\to \infty.
$$
Given any integer $N\ge 1$ and any $\epsilon>0$, we can thus find some elements
$y_j=x_{n_j}$, $j=1,\dots,N$, of our sequence such that
$$
\sup_{\lambda \in \Lambda}\,|e^{-2\pi i \lambda y_j}-1|\le \epsilon.
$$
By Minkowski's inequality, if $t\in \mathbb{R}$, $R>0$ and $c_1,\dots,c_N\in \mathbb{C}$,
 we have, letting $\mu=\delta_\Lambda$, that
\begin{align*}
&\left(|\sum_{j=1}^N\,c_j|^2\right)^{1/2}
\,\left(\frac{\mu([t,t+R])}{R}\right)^{1/2}
=\left(\frac{1}{R}\,\int_{[t,t+R]}\,|\sum_{j=1}^N\,c_j|^2\,d\mu(\lambda)\right)^{1/2}\\
&\le \left(\frac{1}{R}\,\int_{[t,t+R]}\,|\sum_{j=1}^N\,c_j\,
(1-e^{-2\pi i \lambda y_j})|^2\,d\mu(\lambda)\right)^{1/2}
+\left(\frac{1}{R}\,\int_{[t,t+R]}\,|\sum_{j=1}^N\,c_j\,
e^{-2\pi i \lambda y_j}|^2\,d\mu(\lambda)\right)^{1/2}\\
&\le \epsilon\,\left(\frac{1}{R}\,\int_{[t,t+R]}\,\left(\sum_{j=1}^N\,|c_j|
\right)^2\,d\mu(\lambda)\right)^{1/2}
+\left(\frac{1}{R}\,\int_{[t,t+R]}\,|\sum_{j=1}^N\,c_j\,
e^{-2\pi i \lambda y_j}|^2\,d\mu(\lambda)\right)^{1/2}\\
\end{align*}
Choosing $c_j=1$ for all $j$ and taking limits as $R\to \infty$, we obtain that
$$
N\,\mathcal{D}(\mu)^{1/2}\le \epsilon\,N\,\mathcal{D}(\mu)^{1/2}+
\limsup_{R\to \infty\,}\left(\frac{1}{R}\,\int_{[t,t+R]}\,|\sum_{j=1}^N\,
e^{-2\pi i \lambda y_j}|^2\,d\mu(\lambda)\right)^{1/2}
$$
If $\mu\in \mathcal{B}(\mathbb{R}, B)$, it would then follow
from part (a) of Theorem \ref{main2}
 that, for any $\epsilon>0$
and any $N\ge 1$,
$$
N\,(1-\epsilon)\,\mathcal{D}(\mu)^{1/2}\le B\, N^{1/2}
$$
which yields a contradiction since $\mathcal{D}(\mu)=\mathcal{D}(\Lambda)>0$
if $\Lambda$ is a simple quasicrystal.
\end{proof}
\end{proposition}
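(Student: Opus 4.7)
The plan is to exploit the characterization of $\mathcal{B}(\mathbb{R},B)$ given by Theorem \ref{a-p}(a), which states that $\delta_\Lambda \in \mathcal{B}(\mathbb{R},B)$ if and only if $\mathcal{D}^+(|F|^2\,\delta_\Lambda) \le B\,\mathcal{M}(|F|^2)$ for every $F \in AP(\mathbb{R})$. In particular, this must hold for every trigonometric polynomial with real frequencies. The strategy is to construct, for each large integer $N$, a trigonometric polynomial $P_N$ of ``length'' $N$ that is very close to constant value $N$ on $\Lambda$, while having mean square equal to $N$. This forces $\mathcal{D}^+(|P_N|^2\,\delta_\Lambda) \approx N^2\,\mathcal{D}(\Lambda)$, contradicting the Bessel bound $BN$ for $N$ large.

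First I would invoke the fact that any simple quasicrystal $\Lambda$ is a \emph{harmonious set} in the sense of Meyer: for every $\epsilon > 0$ the set of ``$\epsilon$-dual'' points
$$ \Lambda^*_\epsilon := \left\{ y \in \mathbb{R} : \sup_{\lambda \in \Lambda} |e^{-2\pi i \lambda y} - 1| \le \epsilon \right\} $$
is relatively dense in $\mathbb{R}$. In particular, $\Lambda^*_\epsilon$ is infinite, so for any integer $N\ge 1$ one can pick $N$ distinct elements $y_1,\dots,y_N \in \Lambda^*_\epsilon$. Define
$$ P_N(\lambda) = \sum_{j=1}^N e^{-2\pi i \lambda y_j}. $$
Since the $y_j$'s are distinct, $\mathcal{M}(|P_N|^2) = N$.

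Next, I would compute the local mass of $|P_N|^2 \delta_\Lambda$. For every $\lambda \in \Lambda$, the reverse triangle inequality gives $|P_N(\lambda)| \ge N - N\epsilon = N(1-\epsilon)$, hence $|P_N(\lambda)|^2 \ge N^2(1-\epsilon)^2$ on $\mathrm{supp}(\delta_\Lambda)$. Consequently
$$ \mathcal{D}^+(|P_N|^2\,\delta_\Lambda) \;\ge\; N^2(1-\epsilon)^2 \cdot \mathcal{D}^-(\Lambda) \;=\; N^2(1-\epsilon)^2\,\mathcal{D}(\Lambda), $$
using that a simple quasicrystal has a well-defined positive Beurling density. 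If $\delta_\Lambda \in \mathcal{B}(\mathbb{R},B)$ held, Theorem \ref{a-p}(a) applied to $F=P_N$ would yield $N^2(1-\epsilon)^2\,\mathcal{D}(\Lambda) \le B\,N$, i.e.\ $N(1-\epsilon)^2\,\mathcal{D}(\Lambda) \le B$. Fixing $\epsilon = 1/2$ (say) and letting $N \to \infty$ produces the desired contradiction.

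The main obstacle is verifying the two input facts from Meyer's theory: that a simple quasicrystal is harmonious with $\Lambda^*_\epsilon$ relatively dense in $\mathbb{R}$, and that $\mathcal{D}(\Lambda)$ exists and is strictly positive. Both are standard for cut-and-project sets, and once they are in hand the argument reduces to the elementary counting above. A minor technical point is that Theorem \ref{a-p} requires $\delta_\Lambda$ to be translation-bounded so that the left-hand side $\mathcal{D}^+(|P_N|^2\,\delta_\Lambda)$ makes sense; this is automatic since $\Lambda$ is uniformly discrete.
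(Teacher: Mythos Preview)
Your proof is correct and follows essentially the same route as the paper's: both exploit that a simple quasicrystal is harmonious, choose $N$ frequencies $y_j$ making each $e^{-2\pi i\lambda y_j}$ uniformly close to $1$ on $\Lambda$, and then contrast the resulting size $\approx N^2\mathcal{D}(\Lambda)$ of $\mathcal{D}^+(|P_N|^2\delta_\Lambda)$ with the Bessel bound $B\,\mathcal{M}(|P_N|^2)=BN$. The only cosmetic difference is that the paper phrases the comparison via Minkowski's inequality on the averaged integrals and invokes Theorem~\ref{main2}(a), whereas you use the direct pointwise bound $|P_N(\lambda)|\ge N(1-\epsilon)$ and invoke the equivalent Theorem~\ref{a-p}(a); your version is slightly cleaner and makes the need for the $y_j$ to be distinct (so that $\mathcal{M}(|P_N|^2)=N$) explicit.
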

Despite the previous negative result, we will to show the existence
of a discrete set $\Lambda$ such that the measure $\delta_\Lambda\in
\mathcal{B}(\mathbb{R},A, B)$ for some constants $0<A<B<\infty$.
In doing so, our task will be greatly simplified by
the following powerful recent result of S. Nitzan, A. Olevskii and A. Ulanovskii.
\begin{theorem}[\cite{NOU}]\label{NOU}
Every measurable set $E\subset \mathbb{R}$ with $|E|<\infty$
admits a
discrete set
$\Lambda$ such that $\{e^{2\pi i \lambda x}\}_{\lambda\in
\Lambda}$ is a frame for $L^2(E)$.
\end{theorem}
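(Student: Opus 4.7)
For bounded $E$ the existence of a Fourier frame is classical (Beurling's theorem, Landau-type density bounds), so the essential content of Theorem~\ref{NOU} lies in the case where $E$ is unbounded with $|E|<\infty$. My plan is to follow the strategy of [NOU], whose key external input is the Marcus--Spielman--Srivastava (MSS) paving theorem, i.e., the solution of the Kadison--Singer problem. None of the matrix-density machinery developed earlier in the paper is directly applicable here, since it was tailored to small cubes and translation-bounded $\mu$.

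First, by a dilation $x\mapsto cx$, one may assume $|E|<1$. Introduce the periodization $\Phi(x)=\sum_{k\in\mathbb{Z}}\chi_E(x+k)$ on $[0,1)$: it is a nonnegative integer-valued measurable function with $\int_0^1\Phi=|E|<1$, but $\Phi$ need not lie in $L^\infty$. When $\Phi$ is bounded, the orthonormal system $\{e^{2\pi i nx}\}_{n\in\mathbb{Z}}$ is already Bessel in $L^2(E)$ with bound $\|\Phi\|_\infty$, and MSS paving alone allows one to extract $\Lambda\subset\mathbb{Z}$ that is both Bessel and a Riesz sequence of the right complementary density, yielding a frame. For general unbounded $\Phi$, decompose $[0,1)=\bigsqcup_{m\ge 1}F_m$ by the level sets $F_m=\{\Phi=m\}$; the part of $L^2(E)$ lying over $F_m$ is canonically isomorphic to $L^2(F_m;\mathbb{C}^m)$, a finite-multiplicity space indexed by the $m$ sheets $F_m+k_j^m$ of $E$ above $F_m$.

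Second, for each $m$ one builds a frame for $L^2(F_m;\mathbb{C}^m)$ from a discrete set $\Lambda_m\subset\mathbb{R}$ of exponentials: starting from a suitably dense Parseval-type system inside a bounded frequency band, MSS paving produces $m$ Riesz subsequences which, after modulation by translates corresponding to the sheet offsets $k_j^m$, combine into a frame on the multiplicity space. The aggregate $\Lambda=\bigcup_m\Lambda_m$ is then placed on well-separated frequency scales so that it is discrete and so that the contributions from different level sets are asymptotically orthogonal.

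The main obstacle is uniformity. The per-level frame bounds must assemble into global constants $A,B$ independent of $m$, which forces one to use a \emph{quantitative} form of MSS in which the paving constants degrade only mildly with the multiplicity, and to bound the cross-level interference so that the global frame operator is a bounded perturbation of a block-diagonal operator with uniformly invertible blocks. This delicate patching is inherently probabilistic, which accounts for the non-constructive nature of the result, reinforcing Gabardo--Lai's point that no explicit deterministic $\Lambda$ is currently known.
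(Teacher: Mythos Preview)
The paper does not prove Theorem~\ref{NOU}; it is quoted from \cite{NOU} and invoked as a black box in the proof of Theorem~\ref{th27}. The authors' only comment on the proof is that it is ``far from trivial if $E$ is an unbounded set'' and that it ``uses a result in \cite{MSS}'' (the solution of the Kadison--Singer problem). There is therefore no ``paper's own proof'' against which to compare your proposal.

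For what it is worth, your outline correctly identifies MSS as the essential external input and correctly isolates uniformity of the frame bounds as the crux, but the architecture you sketch---level sets $F_m=\{\Phi=m\}$ of the periodization and vector-valued multiplicity spaces $L^2(F_m;\mathbb{C}^m)$---is not the route taken in \cite{NOU}. Their argument first establishes, via MSS, a \emph{quantitative} single-interval result: every measurable $S\subset[0,1]$ admits $\Lambda\subset\mathbb{Z}$ which is a frame for $L^2(S)$ with bounds $c|S|$ and $C|S|$, where $c,C>0$ are \emph{universal} constants independent of $S$. The unbounded case is then obtained by splitting $E$ into the pieces $E_n=E\cap[n,n+1)$ and assembling, with the universality of $c,C$ together with $\sum_n|E_n|<\infty$ doing the work. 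Your level-set scheme might be pushed through, but the difficulties you flag as the ``main obstacle'' (controlling paving constants as the multiplicity $m$ grows, and bounding cross-level interference after placing the $\Lambda_m$ on ``well-separated frequency scales'') are artifacts of that organization rather than intrinsic to the problem; in particular, the step where ``modulation by translates corresponding to the sheet offsets $k_j^m$'' is supposed to yield a frame for $L^2(F_m;\mathbb{C}^m)$ is asserted without justification and is not obviously correct.
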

The proof of this last theorem  itself is far from trivial if $E$ is an unbounded set. It
uses a result
 in \cite{MSS} in which the authors solve the long standing
Kadison-Singer conjecture.
The idea to reach our goal  is then to use Theorem \ref{NOU} for a particular unbounded set $E$
satisfying the properties in the following lemma.
\begin{lemma}\label{E-lem}
There exists a set an open set $E\subset \mathbb{R}$
with $|E|<\infty$
such that, for any $x_1,\dots,x_m \in \mathbb{R}$,
 we have
$$
E\cap (E+x_1)\cap\dots(E+x_m)\ne\emptyset
$$
\end{lemma}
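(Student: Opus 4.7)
The plan is to exploit the Baire category theorem by taking $E$ to be an open dense subset of $\mathbb{R}$ with finite Lebesgue measure. Concretely, fix an enumeration $\{q_n\}_{n\ge 1}$ of $\mathbb{Q}$ (or any countable dense subset of $\mathbb{R}$) and a sequence of positive radii $\{r_n\}_{n\ge 1}$ with $\sum_{n\ge 1} r_n <\infty$ (for instance $r_n = 2^{-n}$), and define
\[
E := \bigcup_{n\ge 1}\, (q_n - r_n,\, q_n + r_n).
\]
Then $E$ is open by construction, $|E|\le 2\sum_{n\ge 1} r_n <\infty$, and $E\supset \mathbb{Q}$ so $E$ is dense in $\mathbb{R}$.

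Given any finite collection $x_1,\dots,x_m\in\mathbb{R}$, each translate $E+x_i$ is again an open dense subset of $\mathbb{R}$, since translations are homeomorphisms that preserve both openness and denseness. Because $\mathbb{R}$ is a complete metric space, the Baire category theorem guarantees that the finite intersection
\[
E\,\cap\,(E+x_1)\,\cap\cdots\cap\,(E+x_m)
\]
of open dense sets is itself dense in $\mathbb{R}$, and in particular nonempty, which is precisely the required property.

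The proof is essentially topological once the set $E$ has been chosen correctly, so there is no genuine obstacle to overcome: the only ``design choice'' is to ensure both denseness (so that Baire category applies after translations) and finite measure (so that the radii must be summable), and both are accommodated simultaneously by the construction above.
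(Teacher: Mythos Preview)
Your proof is correct and takes a genuinely different route from the paper's. The paper builds an explicit set $E=\bigcup_{j\ge 0} E_j$, where each $E_j$ is a union of shrinking intervals centered at the dyadic points $k/2^j$, and then proves the intersection property by induction on $m$, using that at each step the current intersection contains an open interval $J$ and that $J-E_j=\mathbb{R}$ once $2^{-j}<|J|$. Your argument replaces this hands-on induction by a single topological observation: any open dense set of finite measure does the job, since translates remain open and dense, and a finite intersection of open dense sets in $\mathbb{R}$ is again dense (hence nonempty). In fact you do not even need the full Baire category theorem here---for \emph{finitely} many open dense sets $U_1,\dots,U_m$, density of the intersection follows by elementary induction: if $V$ is nonempty open, then $V\cap U_1$ is nonempty open, so $(V\cap U_1)\cap U_2$ is nonempty, and so on. Your approach is shorter and more conceptual; the paper's is more explicit and, incidentally, yields a symmetric set $E=-E$, a feature exploited in the subsequent application (Theorem~\ref{th27}). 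Your set $E$ is not symmetric as written, but this is easily arranged (e.g., replace $E$ by $E\cup(-E)$) and in any case lies outside the statement of the lemma.
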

\begin{proof} Define for any integer $j\ge 0$ the set
$$
E_j=\cup_{k\in \mathbb{Z}}\,(k/2^j-1/2^{j+|k|+1},
k/2^j+1/2^{j+|k|+1})
$$
and let
$$
E=\cup_{j\ge 0}\, E_j.
$$
Note that $|E_j|=\sum_{k\in \mathbb{Z}}\,1/2^{j+|k|}=3/2^j$
and thus
$
|E|\le 3\,\sum_{j=0}^\infty\,2^{-j}=6<\infty.
$
Let $x_1,\dots,x_m\in \mathbb{R}$. Note that
$$
E\cap (E+x_1)\ne \emptyset
\iff x_1\in E-E.
$$
Since $(-1/2,1/2)\subset E$ and $E$ contains a neighborhood
of each integer, we have $E-E=\mathbb{R}$, so our claim
is true for $m=1$. To prove our claim for arbitrary $m$,
we use an induction argument. If we have
$$
E\cap (E+x_1)\cap\dots(E+x_{m-1})\ne\emptyset,
$$
let $J$ be a non-empty open interval contained in the intersection above.
The intersection $(E+x_m)\cap J$ is non-empty
if and only if $x_m\in J-E$. If $j\ge 0$ is chosen so that
$1/2^{j}<|J|$, we have $J-E_j=\mathbb{R}$
since $E_j$ contains a neighborhood of each point in $2^{-j}\,\mathbb{Z}$. Hence, $J-E=\mathbb{R}$ and our claim follows.
\end{proof}
\begin{theorem}\label{th27}
There exists a
discrete set
$\Lambda$ such that $\delta_\Lambda\in
\mathcal{B}(\mathbb{R},A, B)$ for some constants $0<A<B<\infty$.
\end{theorem}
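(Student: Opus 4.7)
The plan is to combine Theorem~\ref{NOU} with the universal-intersection property provided by Lemma~\ref{E-lem}. First, let $E\subset\mathbb{R}$ be the open set of finite measure produced by Lemma~\ref{E-lem}, so that $E\cap(E+t_1)\cap\cdots\cap(E+t_m)\neq\emptyset$ for every finite collection $t_1,\dots,t_m\in\mathbb{R}$. Invoking Theorem~\ref{NOU} on this particular $E$, one obtains a discrete set $\Lambda\subset\mathbb{R}$ for which $\mathcal{E}(\Lambda)$ is a Fourier frame for $L^2(E)$; denote its frame bounds by $A,B$, so that $\delta_\Lambda$ is an $\mathcal{F}$-measure for $L^2(E)$ with constants $A$ and $B$. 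The goal is to show that this $\delta_\Lambda$ lies in $\mathcal{F}(\mathbb{R},A,B)$.

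Fix arbitrary distinct points $x_1,\dots,x_M\in\mathbb{R}$ and $\delta>0$. Apply Lemma~\ref{E-lem} to the points $-x_1,\dots,-x_M$ to obtain some $y\in E\cap(E-x_1)\cap\cdots\cap(E-x_M)$; by construction, $y\in E$ and $y+x_j\in E$ for every $j=1,\dots,M$. Since $E$ is open and the finite set $\{y,y+x_1,\dots,y+x_M\}$ lies in $E$, we may choose $\epsilon>0$ small enough that $y+x_j+Q_\epsilon\subset E$ for each $j=0,1,\dots,M$ (with $x_0:=0$) and such that the cubes $x_j+Q_\epsilon$ are pairwise disjoint. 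Setting $\Omega:=\bigcup_{j=1}^M(x_j+Q_\epsilon)$, its translate $y+\Omega=\bigcup_{j=1}^M(y+x_j+Q_\epsilon)$ is then contained in $E$.

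The last step is a simple translation trick. For any $g\in L^2(\Omega)$, define $f(x)=g(x-y)$; then $f\in L^2(y+\Omega)\subset L^2(E)$ (extended by zero), $\|f\|_2=\|g\|_2$, and $|\hat f(\xi)|^2=|\hat g(\xi)|^2$ pointwise, because translation of $g$ merely multiplies $\hat g$ by a unimodular factor. Applying the frame inequalities for $L^2(E)$ to $f$ therefore yields
$$
A\,\|g\|_2^2\le \int_{\mathbb{R}}|\hat g(\xi)|^2\,d\delta_\Lambda(\xi)\le B\,\|g\|_2^2,
$$
which in particular implies the inequalities in Definition~\ref{def3} with the looser bounds $A-\delta$ and $B+\delta$. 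Hence $\delta_\Lambda\in\mathcal{F}(\mathbb{R},A,B)$, as desired.

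The essential ingredients are the existence of the universal set $E$ from Lemma~\ref{E-lem} and of a Fourier frame for $L^2(E)$ from Theorem~\ref{NOU}; once these tools are in hand, no serious obstacle remains, since the frame property transfers from $L^2(E)$ to $L^2(\Omega)$ for any suitable small translate $\Omega$ via the modulation invariance of $|\hat f|^2$.
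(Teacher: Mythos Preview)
Your proof is correct and follows essentially the same approach as the paper's: use Lemma~\ref{E-lem} to produce the universal set $E$, apply Theorem~\ref{NOU} to obtain a Fourier frame $\Lambda$ for $L^2(E)$, and then for any finite set of points $x_1,\dots,x_M$ translate the small union of cubes into $E$ and inherit the frame bounds. The only cosmetic difference is that you apply Lemma~\ref{E-lem} to $-x_1,\dots,-x_M$ directly, whereas the paper applies it to $x_1,\dots,x_m$ and then invokes the symmetry $-E=E$ of the particular set constructed there; your variant avoids that extra observation but is otherwise identical.
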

\begin{proof}
let $E$ be the set constructed in the previous lemma
 and let $\Lambda\subset \mathbb{R}$ be a discrete set such that
$\delta_\Lambda$ is an $\mathcal{F}$-measure for $L^2(E)$
and whose existence follows from Theorem  \ref{NOU}.
We claim that, for any
$x_1,\dots,x_m\in \mathbb{R}$, there exists $\epsilon>0$
such that the collection
$\{e^{2\pi i \lambda x}\}_{\lambda\in
\Lambda}$ is a frame for $L^2(\cup_{i=1}^m\,x_i+Q_\epsilon)$
with frame bounds independent of $x_1,\dots, x_m$ and $\epsilon$.
Indeed, if $E$ is as above, there exists, by Lemma \ref{E-lem},
 $z\in E\cap (E+x_1)\cap\dots(E+x_m)\ne\emptyset$.
Then $z=z_1+x_1=\dots=z_m+x_m$  with $z_i\in E$
or $x_i-z=-z_i\in -E=E$, $i=1,\dots,m$.
For $\epsilon>0$ small enough, we have
$\cup_{i=1}^m\, (x_i-z)+Q_\epsilon\subset E$
and $\{e^{2\pi i \lambda x}\}_{\lambda\in
\Lambda}$ is a frame for $L^2(\cup_{i=1}^m\,(x_i-z)+Q_\epsilon)$
and thus also for $L^2(\cup_{i=1}^m\,x_i+Q_\epsilon)$.
\end{proof}

We note that the constants $A,B$ found in Theorem \ref{th27} are 
not equal since ${\mathcal E}(\Lambda)$ is a Fourier frame for the unbounded 
set $E$ with $A, B$ being the frame bounds. 
It was shown in \cite{[GL1]} that such set cannot admit any tight Fourier frames, 
so $A,B$ cannot be equal. It is unknown whether we can make $A,B$ equal for some other discrete set
$\Lambda$.


\begin{thebibliography}{9999}

\bibitem[Beu]{[Beu]}
{\sc A. Beurling}, {\it Local Harmonic Analysis with some Applications to Differential
Operators} In: The collected Works of Arne Beurling, Vol.2, Harmonic
Analysis. Birkhauser, Boston, 1989.

\bibitem[BW]{[BW]}
{\sc John J. Benedetto and Hui-Chuan Wu}, {\it Non-uniform sampling and spiral MRI reconstruction},  SPIE (2000).

\bibitem [Chr] {Chr03}
{\sc O. christensen}, {\it An Introduction to Frames and Riesz Bases}, Applied and Numerical Harmonic Analysis. Birkh\"{a}user Boston Inc., Boston, MA, 2003.

\bibitem[DS] {[DS]}
{\sc R.~Duffin, and A.~Schaeffer}, {\it A class of nonharmonic Fourier series},
Tran. Amer. Math. Soc., 72 (1952), 341--366.

\bibitem [DHW]{[DHW]}
{\sc D.~Dutkay, D.~Han, and E.~Weber}, {\it Continuous and discrete Fourier
frames for Fractal measures}, Tran Amer Math Soc., 366 (2014),  1213-1235.

\bibitem [FR]{[FR]}
{\sc S.~Foucart and H.~Rauhut}, {\it A mathematical introduction to compressive sensing}, 
Applied and Numerical Harmonic Analysis, Birkh\"auser/Springer, New York, 2013.

\bibitem[Ga]{Ga}
 {\sc J.-P.~Gabardo}, ``Convolution inequalities for positive Borel measures on $\mathbb{R}^d$
and Beurling density'' in Excursions In Harmonic Analysis. Volume 2, 23--47,
Appl.~Numer.~Harmon.~Anal., Birkh\"auser/Springer, New York, 2013.

\bibitem [GH] {[GH]}
{\sc J.-P.~Gabardo and D. Han}, {\it Frames associated with measurable spaces}, Adv. in Comput. Math. 18 (2003), 127-147.


\bibitem [GL]{[GL1]}
{\sc J.-P Gabardo and C.-K. Lai}, {\it Frames of multi-windowed exponentials for subsets of ${\Bbb R}^d$}, Appl. Comp. Harm. Anal., 36 (2014),  461-472.


\bibitem [GR] {[GR]}
{\sc K. Gr\"{o}chenig and H. Razafinjatovo}, {\it On Landau's necessary density conditions for sampling and interpolation of band-limited functions}, J. Lonon Math. Soc. 54 (1996), 557-565.


\bibitem [Ja] {[Ja]}
{\sc S. Jaffard}, {\it A density theorem for frames of complex exponentials}, Michigian Math. J., 38 (1991), 339-348.




\bibitem[Ka]{Ka} {\sc Y.~Katznelson},
An Introduction to Harmonic Analysis ($3^{\text{rd}}$ ed.), Cambridge University Press, Cambridge, 2004

\bibitem[KN]{KN}
{\sc L.~Kuipers; H.~Niederreiter},
Uniform Distribution of Sequences, John Wiley, New York, 1974.

\bibitem [Lan] {[Lan]}
{\sc H. Landau}, {\it Necessary Density Conditions for Sampling and
Interpolation of Certain Entrie Functions}, Acta Math., 117 (1967),
37-52.

\bibitem [Lai]{[Lai]}
{\sc C.-K. Lai}, {\it On Fourier frame of absolutely continuous measures}, J. Funct. Anal., 261 (2011), 2877-2889.


\bibitem[Ma]{Ma} {\sc B.~Matei}, private communication, June 2014.

\bibitem[Me1]{Me1} {\sc Y.~Meyer}, {\it Algebraic Numbers and Harmonic Analysis},  North-Holland, Amsterdam, 1972.

\bibitem[Me2]{Me2} {\sc Y.~Meyer},
{\it Quasicrystals, almost periodic patterns, mean-periodic functions and irregular sampling},
Afr. Diaspora J. Math. 13 (2012), no. 1, 1--45.

\bibitem[MM]{MM}
{\sc B.~Matei; Meyer, Yves}, {\it Simple quasicrystals are sets of stable sampling},
Complex Var.~Elliptic Equ.~55 (2010), no. 8-10, 947--964.

\bibitem[MSS]{MSS} {\sc A.~Marcus, D.~A.~Spielman, and N.~Srivastava}, {\it Interlacing families II:
Mixed characteristic polynomials and the Kadison-Singer problem},  Ann. of Math. (2) 182 (2015), no. 1, 327-350. 


\bibitem[NOU]{NOU}{\sc S.~Nitzan, A.~Olevskii and A.~Ulanovskii},
{\it Exponential frames on unbounded sets}, preprint, October 2014, arXiv: 1410.5693


\bibitem[OU1]{OU1}
{\sc A. Olevskii and A. Ulanovskii}, {\it On multi-dimensional sampling and interpolation}, Anal.
Math. Phys. 2 (2012), 149--170.


\bibitem[OU2]{OU2}
{\sc A. Olevskii and A. Ulanovskii}, {\it Universal sampling and interpolation of band-limited signals},  Geom. Funct. Anal. 18 (2008), 1029--1052.


\bibitem[Str]{Str} {\sc R.~Strichartz},
{\it Fourier asymptotics of fractal measures},
 J.~Funct.~Anal., 89 (1990), no. 1, 154--187.


\bibitem[Yo]{Yo} {\sc R.~M.~Young}, {\it An Introduction to Nonharmonic Fourier Series},  Academic Press,
New York-London, 1980.
\end{thebibliography}
\end{document}